\documentclass[12pt,a4paper]{article}
\usepackage[margin=29mm]{geometry}
\usepackage{amsmath,amssymb,amsthm,mathtools}
\usepackage{enumitem}
\usepackage{lmodern}
\usepackage{microtype}
\usepackage{xcolor}
\usepackage{hyperref}
\usepackage[nameinlink,capitalize,noabbrev]{cleveref}

\hypersetup{
	colorlinks=true,
	linkcolor=blue,
	filecolor=blue,
	urlcolor=blue,
	citecolor=cyan,
}

\allowdisplaybreaks
\numberwithin{equation}{section}
\setlist[enumerate]{leftmargin=2.2em,itemsep=2pt,topsep=3pt}

\newtheorem{theorem}{Theorem}[section]
\newtheorem{lemma}[theorem]{Lemma}
\newtheorem{proposition}[theorem]{Proposition}
\newtheorem{corollary}[theorem]{Corollary}
\newtheorem{problem}[theorem]{Open Problem}

\theoremstyle{definition}
\newtheorem{definition}[theorem]{Definition}
\theoremstyle{remark}

\definecolor{DarkGreen}{rgb}{0.1, 0.5, 0.3}

\newcommand{\hide}[1]{{\color{DarkGreen}#1}}
\renewcommand{\hide}[1]{}

\newcommand{\V}[1]{\boldsymbol{#1}}
\newcommand{\Z}[1]{\mathbb Z_{#1}}
\newcommand{\me}{\mathrm{e}}
\newcommand{\CSOne}{\sigma_1}
\newcommand{\CSTwo}{\sigma_2}
\newcommand{\n}[2]{|#2_{#1}|}
\newcommand{\F}[2]{\mathcal F_{#1,#2}}

\newcommand{\PP}{m}
\renewcommand{\longrightarrow}{\to}

\title{Tur\'an densities of hypergraph augmentations}

\author{
Oleg Pikhurko\thanks{Mathematics Institute and DIMAP, University of Warwick, Coventry CV4 7AL, UK (Email:
o.pikhurko@warwick.ac.uk)}\and Jiabao Yang\thanks{School of Mathematics, Nanjing University, Nanjing 210093, China (Email: jbyang1215@nju.edu.cn)}\and 
Xiutao Zhu\thanks{School of Mathematics, Nanjing University of Aeronautics and Astronautics, Nanjing 211106, China (Email: zhuxiutao@nuaa.edu.cn)}
}
\date{}

\begin{document}

\maketitle
\vspace{-1.2em}

\begin{abstract}
For a $t$-graph $F$ and an integer $r\ge t$, the \emph{$r$-augmentation} $F(r)$ is the $r$-graph obtained by adding the same set of $r-t$ new vertices to every edge of $F$. We investigate the behaviour of the \emph{Tur\'an density $\pi(F(r))$}, which is the asymptotically maximum edge density of a large $F(r)$-free $r$-graph, as a function of $r\to\infty$.

Let $H_k^r$ be the $r$-augmentation of the complete $(k-1)$-graph on $k$ vertices; equivalently, $H_k^r$ is
the (unique up to isomorphism) $r$-graph with $r+1$ vertices and $k$ edges. This paper determines the Tur\'an density of each $r$-graph $H_k^r$ within a constant factor and estimates the lower bounds on $\pi(H_k^r)$ coming from the circular construction of Sidorenko. In particular, it is shown that if $k/\log r\to\infty$ then $\pi(H_k^r)=(1+o(1))k/r$.

Also, we consider augmentations of graphs (that is, the case $t=2$) and 
prove that $\pi(F(r))\ge(\log r)^{-1-o(1)}/r$ for every 3-chromatic graph $F$, which comes close to the known upper bound $1/r$. Note that, for all other graphs $F$, the function $\pi(F(r))$ is known to be either identically 0 or uniformly bounded away from 0.
\end{abstract}

\medskip
\noindent\textbf{Keywords.} Tur\'an density; hypergraph augmentation; circular construction

\medskip
\noindent\textbf{AMS classification:} 05C35, 05C65

\section{Introduction}

An \emph{$r$-graph} $H$ with vertex set $V$ is a collection of $r$-subsets of $V$. The {\em Tur\'{a}n number} of an $r$-graph $H$, denoted by $\mathrm{ex}(n,H)$, is the maximum number of edges in an $H$-free $r$-graph on $n$ vertices. 
As it was shown by Katona, Nemetz and Simonovits~\cite{KatonaNemetzSimonovits64} (see Lemma~\ref{end:lem:finite-density}), the limit
\[
\pi(H):=\lim_{n\to\infty} \mathrm{ex}(n,H)/\binom {n}{r},
\]
 exists; we call it the \emph{Tur\'an density of $H$}.

Since the fundamental paper by Tur\'an~\cite{Turan41}, Tur\'an problems have been a central topic in extremal combinatorics. For \emph{graphs} (that is, $2$-graphs), Tur\'an densities are well understood: the classical Erd\H{o}s--Stone--Simonovits theorem \cite{ErdosStone46,ErdosSimonovits66} determines $\pi(H)$ for any given graph $H$ in terms of the chromatic number of $H$. However, determining the Tur\'an density of a given $r$-graph with $r\ge 3$ is very difficult in general; we refer the reader to surveys~\cite{Furedi91,Decaen94,Sidorenko95,Keevash11}. 

In order to gain insights, considerable efforts were put towards understanding the growth of $\pi(F_r)$ as $r\to\infty$ for some natural sequences of forbidden $r$-graphs $F_r$. 

One such (2-parameter) forbidden family is as follows. For $2\le k\le r+1$, let $H_k^r$ be the (unique up to an isomorphism) $r$-graph on $r+1$ vertices with $k$ edges. It is easy to see that $\pi(H_2^r)=0$, so we consider $k\ge 3$ only. While an easy averaging argument shows $\pi(H_k^r)\le (k-1)/(r+1)$, a slightly better bound
\begin{equation}\label{eq:(k-2)/r}
	\pi(H_k^r)\le \frac{k-2}r,\quad \text{for every $r+1\ge k\ge 3$}
\end{equation}
follows from Sidorenko~\cite[Eq.~(7)]{Sidorenko82}; for an easily accessible paper containing a proof of~\eqref{eq:(k-2)/r}, see e.g.\ \cite[Corollary~3.9]{MarkstromThomassen21}.

Various special cases of the Tur\'an problem for $H_k^r$ are of great interest in hypergraph Tur\'an theory. For example, if $k=r+1$ then $H_k^r$ is $K_{r+1}^r$,  where $K_m^r$ denotes the complete $r$-graph on $m$ vertices. While the old theorem of Mantel~\cite{Mantel07} shows that the Tur\'an density of the triangle $K_3^2$ is $1/2$, no density $\pi(K_{r+1}^r)$ with $r\ge 3$ is known. In particular, the famous Tetrahedron Conjecture that $\pi(K_4^3)=5/9$ is still open, despite a \$1000 reward offered by Erd\H os (see~\cite[Page 260]{Furedi91}). The currently best known upper bound $\pi(K_4^3)\le 0.5577...$ was obtained by Jeong, Park, Im, Lee and Yang~\cite{JeongParkImLeeYang26x} using flag algebras.
In the next case $r=4$, a construction of Giraud~\cite{Giraud90} gives $\pi(K_5^4)\ge 11/16$, which is conjectured to be sharp. Here, Markstr\"om~\cite{Markstrom09} proved $\pi(K_5^4)\le 0.7365...$ with some small improvements coming from a rounding trick, see~\cite[Section 2]{Sidorenko21}. Interestingly,  Sidorenko~\cite{Sidorenko95} in his survey mentioned that there was no plausible conjecture about the value of $\pi(K_{r+1}^r)$ for any $r\ge 5$ and this still seems to be the case today. However, the behaviour of $\pi(K_{r+1}^r)$ as $r\to\infty$, studied in e.g.\ \cite{Sidorenko82,Decaen83ac,KimRoush83,TazawaShirakura83,FranklRodl85gc,Sidorenko97,ChungLu99,LuZhao09,Pikhurko25,GaoKuangPikhurkoWang26} is now fairly well understood: we know that 
$$
1-\frac{4}{r+2}\le \pi(K_{r+1}^r)\le 1-\frac1r,\quad\text{for every $r\ge2$}.
$$ 
See \cite{PikhurkoVerbitskyZhukovskii25,GaoKuangPikhurkoWang26} for connections of this problem to covering codes.

Another notable case is $H_3^r$ (that is, the $r$-graph with $r+1$ vertices and 3 edges).  If $r=3$ then we get $K_4^3$ minus an edge, for which  Frankl and F\"uredi~\cite{FranklFuredi84} (and independently Giraud, see~\cite[Page 325]{FranklFuredi84}) showed that $\pi(H_3^3)\ge 2/7$, which is conjectured to be sharp (with the best known upper bound $\pi(H_3^3)\le 0.2868...$ obtained by Falgas-Ravry and Vaughan~\cite{FalgasVaughan13} using flag algebras). Gunderson and Semeraro~\cite{GundersonSemeraro17} proved that $\pi(H_3^4)=1/4$. Sidorenko~\cite{Sidorenko24} presented a geometric construction of $H_k^r$-free $r$-graphs for arbitrary $r$ and $k$; in particular, it gives quite good lower bounds on $\pi(H_3^r)$ for small~$r$. Independently of~\cite{Sidorenko24}, 
Gunderson and Semeraro~\cite[Theorem~1.1]{GundersonSemeraro25} produced different constructions for $r\in\{6,7,8\}$.
\hide{Gunderson and Semeraro~\cite{GundersonSemeraro25} proved $\pi(H_3^6)\ge 9/64$, $\pi(H_3^7)\ge 35/2^{11}$, and $\pi(H_3^8)\ge 315/2^{14}$.}

Let us turn our attention to the case when $r\to\infty$. Frankl and F\"uredi~\cite[Example~3]{FranklFuredi84} observed that $\pi(H_3^r)$ is at least $(1+o(1))/2^{r-1}$ as $r\to\infty$.
Sidorenko~\cite[Corollary 4.6]{Sidorenko24} proved that $\pi(H_3^r)\ge (1.7215-o(1))/r^{2}$ and Clemen~\cite[Theorem~1.2]{Clemen26} improved this to $\pi(H_3^r)= \Omega( r^{-2}\sqrt{\log r})$.

The first main result of this paper determines the order of magnitude of $\pi(H_3^r)$. Since the construction easily adapts to work for $H_k^r$ with any constant $k$, we present this more general result.

\begin{theorem}\label{thm:mainPoisson}
	For a positive integer $h$, define
	\begin{equation*}		c_h:=\sup_{\lambda>0}\lambda\me^{-\lambda}
		\sum_{j=0}^{h-1}\frac{\lambda^j}{j!}.
	\end{equation*}
	Then, for every fixed $k\ge3$, we have
	\begin{equation}\label{eq:poisson-fixed-bound}
		\liminf_{r\to\infty}r\pi(H_k^r)
		\ge c_{\lfloor(k-1)/2\rfloor}.
	\end{equation}
\end{theorem}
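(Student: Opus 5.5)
The plan is to exhibit, for a fixed $\lambda>0$ and $h:=\lfloor(k-1)/2\rfloor$, an explicit family of $H_k^r$-free $r$-graphs of a circular type, in the spirit of Sidorenko's circular construction. Their edge density should tend, after multiplying by $r$, to
\[
\lambda\me^{-\lambda}\sum_{j=0}^{h-1}\frac{\lambda^j}{j!}=\sum_{j=1}^{h} j\cdot\Pr[\mathrm{Po}(\lambda)=j].
\]
Taking the supremum over $\lambda$ then gives \eqref{eq:poisson-fixed-bound}. Because of the identity on the right, I will aim for a construction in which a uniformly random $r$-set $S$ is an edge with probability $\mathbb E[N\,\mathbf 1\{1\le N\le h\}]/r+o(1/r)$. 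Here $N$ is the number of points of $S$ falling in a designated arc of measure $\lambda/r$, which is asymptotically $\mathrm{Po}(\lambda)$.

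\textbf{The construction.} Vertices are points $x$ of a large discretised circle $\mathbb R/\mathbb Z$, each also carrying a label $\ell(x)\in\Z r$ (or some small multiple of $r$). Fix an arc $A$ of length $\lambda/r$. For an $r$-set $S$, let $N(S):=|S\cap A|$. Declare $S$ an edge iff $1\le N(S)\le h$ and the \emph{global} quantity $\sigma(S):=\sum_{y\in S}\ell(y)$ is congruent to $\ell(a)$ for some $a\in S\cap A$, with the labels of $S\cap A$ distinct. Once the labels are spread uniformly and $N(S)$ is given, the congruence holds with probability about $N(S)/r$. The density computation is then a routine Poissonisation: $N(S)$ is binomial with parameters $r$ and $\lambda/r$, and $\sigma(S)$ is asymptotically uniform and independent of $S\cap A$. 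This yields the claimed limit, using only that the Tur\'an density is at least the limiting edge density of an $H_k^r$-free sequence (cf.\ Lemma~\ref{end:lem:finite-density}).

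\textbf{$H_k^r$-freeness.} Fix an $(r+1)$-set $T$ and write $\sigma:=\sigma(T)$ and $M:=|T\cap A|$. I will count the $v\in T$ for which $T\setminus\{v\}$ is an edge, splitting into two cases.
\begin{enumerate}
\item If $v\in A$, then $v$ is one of the $M$ points of $T\cap A$, and we need $M-1\le h$. So this case contributes at most $\min(M,h+1)$ edges, and contributes nothing if $M>h+1$.
\item If $v\notin A$, then we need $M\le h$ and $\sigma-\ell(v)\equiv\ell(a)$ for some $a\in T\cap A$, that is, $\ell(v)\equiv\sigma-\ell(a)$.
\end{enumerate}
The goal is to show that the total is at most $2h\le k-1$. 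This exactly matches the hypothesis $h=\lfloor(k-1)/2\rfloor$: roughly $h$ edges come from each of the two cases.

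\textbf{Main obstacle.} The difficulty lies in the second case. Many vertices $v\notin A$ may share the label $\sigma-\ell(a)$, so the construction must force each $a$ to produce at most one such $v$. My first attempt is to replace the labels by the positions themselves: use $\sigma(S):=\sum_{y\in S}y \bmod 1$, and require $\sigma(S)$ to lie in a short window of width $1/r$ determined by $a$. Then $v$ is pinned to a short arc, and I will additionally require the points outside $A$ to be $1/r$-separated in an appropriate sense. Alternatively, I will add to the edge condition a ``uniqueness'' clause: the witness $a$ must be unique, and the vertex realising the congruence must be determined, for example as the first point after $A$ in the circular order. This should make the map from $a$ to $v$ injective.

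If the counts in the two cases do not separately reach $h$, I would balance them by tightening the condition in the first case instead, for example by requiring $N(S)\le h-1$ together with an extra witness. Whichever variant is chosen, I must then recheck that it keeps the edge probability at $\mathbb E[N\mathbf 1\{N\le h\}]/r$ asymptotically. This bookkeeping, reconciling injectivity with the exact Poisson density, is where I expect the real work to be.
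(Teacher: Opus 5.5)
\textbf{There is a genuine gap.} It is exactly the step you call the ``main obstacle'', and it is not bookkeeping. As defined, your $r$-graph is not $H_k^r$-free. Take $a\in A$ and $r$ vertices $R$ outside $A$, all with label $0$, and put $T:=\{a\}\cup R$. Every $T\setminus\{v\}$ with $v\in R$ has $N=1$ and $\sigma=\ell(a)$, so $T$ spans $r\ge k$ edges.

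Your suggested repairs do not help. Pinning $v$ to an arc of length $w$ via position sums bounds the number of removable $v$ only if edges have at most a bounded number $C$ of points in every arc of length $w$. A typical $r$-set has this property only for $w=O(r^{-1-1/C})$. That is the regime of Sidorenko's circular construction, with density $r^{-1-\Omega(1)}$. Full $c/r$-separation of all points is worse: it has probability $(1-c)^{r-1}$.

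More fundamentally, no extra clause can work cheaply inside your framework. Take $h=1$, i.e.\ $k\in\{3,4\}$. Then every edge meets $A$ in exactly one vertex. For $a\in A$, let $\mathcal G_a:=\{S\setminus\{a\}: S \text{ an edge with } S\cap A=\{a\}\}$. For an $r$-set $R$ disjoint from $A$, the edges inside $\{a\}\cup R$ are exactly the sets $\{a\}\cup B$ with $B\in\mathcal G_a$ and $B\subseteq R$. So $\mathcal G_a$ must be $H_k^{r-1}$-free, and the edge density is at most $(\lambda\me^{-\lambda}+o(1))\,\pi(H_k^{r-1})\le(\me^{-1}+o(1))\,\pi(H_k^{r-1})$. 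Hence even the theorem's bound $\me^{-1}/r$ would require $\pi(H_k^{r-1})\ge(1-o(1))/r$. For $k=3$ this is asymptotic sharpness of \eqref{eq:(k-2)/r}, an open problem far stronger than the statement. Your arc layer therefore only yields the lossy recursion $\pi(H_k^r)\gtrsim\me^{-1}\pi(H_k^{r-1})$. All the difficulty is pushed into the inner hypergraph, for which the mod-$r$ label sum gives density $1/r$ but no freeness at all.

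\textbf{What is missing} is an invariant with real rigidity across the $r+1$ subsets of an $(r+1)$-set. The paper gets it from linear algebra.
\begin{enumerate}
\item Take a random linear map $M:\mathbb F_q^N\to\mathbb F_q^{r-1}$ with $q\approx r/\lambda$. A typical $r$-set $A$ has a one-dimensional kernel $P_A$ spanned by $\V{c}_A$.
\item The kernel lines of all $r$-subsets of an $(r+1)$-set $U$ lie in the two-dimensional space $P_U$. Two of them coincide only through zero coordinates of the generating vector.
\item Your Poisson variable becomes the number of zero coordinates, each zero with probability $1/q$. Admissibility (at most $h-1$ zeros) caps the multiplicity of each line at $h$.
\item A second random linear map $L$ into $\mathbb F_q^4$ is used, and one requires $L\V{c}_A$ to lie in the cone over a Bose cap in $\operatorname{PG}(3,q)$. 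This costs a factor of about $1/q$. Since no three cap points are collinear, at most two lines of $P_U$ survive, giving at most $2h\le k-1$ edges.
\item The rare sets $U$ with $\dim L(P_U)=1$ are deleted at negligible cost.
\end{enumerate}
The resulting density satisfies $r\,q^{-1}\,\mathbb P(\operatorname{Bin}(r,1/q)\le h-1)\to\lambda\,\mathbb P(\mathrm{Po}(\lambda)\le h-1)$. This is exactly your size-biased expression. So your heuristic of ``$h$ edges from each of two cases'' is right, but the two cases are the two cap points on the projective line $L(P_U)$, not $v\in A$ versus $v\notin A$.
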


In the case $k=3$ of Theorem~\ref{thm:mainPoisson}, we have $c_1=\me^{-1}$ (with the supremum attained at $\lambda:=1$), so we obtain that $\pi(H_3^r)\ge (\me^{-1}-o(1))/r$. Thus we know $\pi(H_3^r)$ within a factor $\me+o(1)$ as $r\to\infty$.


For general $H_k^r$, Sidorenko~\cite{Sidorenko24}  gave a geometric construction proving that 
\begin{equation}\label{eq:GeneralLower}
 \pi(H_k^r)\ge \mathbb E(\xi_{r,k-1}),\quad \text{for all $3\le k\le r+1$},
\end{equation}	
that is, $\pi(H_k^r)$ is at least the expectation of $\xi_{r,k-1}$, where 
for $2\le m\le r$ the random variable $\xi_{r,m}$ is the shortest length of an arc that contains at least $m$ points among $r$ independent uniformly distributed points on the circle of circumference~1. 
For each fixed $k\ge3$, it can be derived from the results of Cressie~\cite{Cressie77}  that $\mathbb{E}(\xi_{r,k-1})=(C_k+o(1))r^{-1-\frac{1}{k-2}}$ with an explicit constant $C_k$; 
by~\eqref{eq:GeneralLower} this is a lower bound on $\pi(H_k^r)$. The value of $\mathbb E(\xi_{r,k-1})$ for growing $k=k(r)$ was not estimated in~\cite{Sidorenko24} in the full range since direct computations become tedious. 

By combining the normalised exponential representation for consecutive arc lengths with a maximal inequality (instead of working with $r$-fold integrals), we prove a lower bound on $\mathbb E(\xi_{r,k-1})$ which translates via~\eqref{eq:GeneralLower} into the following lower bound on $\pi(H_k^r)$.

\begin{theorem}\label{thm:main}
	For all integers $r\ge2$ and $3\le k\le r+1$, it holds that
	\begin{equation*}
		\pi(H_k^r)\ge \frac{k-2}{r}- \frac{4}{r}\left(\sqrt{s \log\frac{\me r}{s}}+\log\frac{\me r}{s}\right),
	\end{equation*}
	where $s:=\min\{k-2,r-k+2\}$.
\end{theorem}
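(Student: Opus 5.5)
Since \eqref{eq:GeneralLower} is already available, it suffices to show that
\[
\mathbb E(\xi_{r,k-1})\ge \frac{k-2}{r}-\frac4r\Bigl(\sqrt{s\log\tfrac{\me r}{s}}+\log\tfrac{\me r}{s}\Bigr).
\]
The plan is to use the normalised exponential representation of circular spacings. Let $E_1,\dots,E_r$ be i.i.d.\ $\mathrm{Exp}(1)$ and $S:=E_1+\dots+E_r$. Then the $r$ consecutive gaps between the $r$ uniform points on the circle have the same joint law as $(E_1/S,\dots,E_r/S)$.

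An arc containing $m=k-1$ of the points spans $m-1=k-2$ consecutive gaps. So, with indices taken mod $r$ and $W_i:=E_i+\dots+E_{i+k-3}$, we have $\xi_{r,k-1}\overset{d}{=}\min_i W_i/S$. By Lukacs' theorem the vector $(E_i/S)_i$ is independent of $S$. Hence $\mathbb E[\min_i W_i]=\mathbb E[S]\,\mathbb E[\xi_{r,k-1}]=r\,\mathbb E[\xi_{r,k-1}]$. This removes the normalisation completely, and the theorem reduces to the purely probabilistic bound
\[
\mathbb E\Bigl[(k-2)-\min_i W_i\Bigr]\le 4\Bigl(\sqrt{s\log\tfrac{\me r}{s}}+\log\tfrac{\me r}{s}\Bigr).
\]

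\textbf{Case $s=k-2$.} A naive union bound over all $r$ windows, combined with the Gamma lower tail $\Pr(W_i\le (k-2)-t)\le \me^{-t^2/(2(k-2))}$, only yields $\sqrt{s\log r}$. To get $\log(\me r/s)$ instead, I partition the starting positions into about $\lceil r/s\rceil$ blocks of $s$ consecutive indices. For a window starting at $j+a$ with $0\le a<s$, write
\[
W_{j+a}=W_j+(E_{j+s}+\dots+E_{j+s+a-1})-(E_j+\dots+E_{j+a-1}).
\]
The deficit of $W_{j+a}$ below $s$ is then at most the deficit of $W_j$ plus $\max_{a<s}$ of a centred partial sum of i.i.d.\ $(1-E)$ variables of length at most $s$. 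Applying a maximal inequality (Doob's submartingale inequality for $\me^{\theta\cdot(\text{partial sum})}$) to this partial sum gives a Bernstein-type tail $\exp(-c\min\{t^2/s,t\})$ for the deficit of the whole block minimum.

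A union bound over the $r/s$ blocks and integration of $\min\{1,(r/s)\me^{-c\min\{t^2/s,\,t\}}\}$ over $t$ then give $O(\sqrt{s\log(\me r/s)}+\log(\me r/s))$. Since each $E_i\ge0$, only the lower tail of the partial sums matters, which is sub-Gaussian, so the linear term arises only from the block-boundary bookkeeping.

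\textbf{Case $s=r-k+2<k-2$.} Here I use complementation: $W_i=S-W'_{i'}$, where $W'$ ranges over the windows of $s$ consecutive gaps. Thus
\[
(k-2)-\min_i W_i=(r-S)+\max_{i'}W'_{i'}-s .
\]
The term $r-S$ has zero mean, so the task becomes bounding $\mathbb E[\max W'-s]$. The same block decomposition applies, now with the upper tail of sums of exponentials, which is genuinely sub-exponential. Bernstein's inequality $\Pr(\mathrm{Gamma}(s)\ge s+t)\le\exp(-t^2/(2(s+t)))$, together with the same maximal inequality, produces both the $\sqrt{s\log(\me r/s)}$ and the $\log(\me r/s)$ terms.

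I expect the main obstacle to be the maximal-inequality step with explicit constants. Controlling the block maximum, including the upper tail in the complementary case, while landing on the constant $4$ will require choosing the exponential-martingale parameter $\theta$ carefully and integrating the tail bound cleanly. Small cases, such as $s$ close to $r$ or $k=3$, should be absorbed by the trivial bound $\mathbb E(\xi_{r,k-1})\ge0$ whenever the right-hand side is negative.
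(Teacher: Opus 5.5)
Your outline follows the paper's strategy: Sidorenko's bound, the normalised exponential representation with $S$ independent of the normalised gaps, reduction to the expected maximal deficit, blocks of $s$ starting positions handled by a Doob-type inequality for $\me^{\theta T_j}$, and the complement identity with $\mathbb E(r-S)=0$. However, the block step is wrong as stated, and it is exactly the step that decides the constant $4$. Expanding your identity gives
\[
s-W_{j+a}=(s-W_j)+\sum_{h=0}^{a-1}(1-E_{j+s+h})+\sum_{h=0}^{a-1}(E_{j+h}-1).
\]
The last sum comes from the gaps leaving the window and can be positive. So the deficit of $W_{j+a}$ is \emph{not} bounded by the deficit of $W_j$ plus a maximal partial sum of $(1-E)$'s; for example, $s=2$, $E_j=10$, $E_{j+1}=E_{j+2}=0$ violates it at $a=1$. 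That extra term is an upper-tail sum, which is only sub-exponential. It also uses the same variables $E_j,\dots,E_{j+a-1}$ as $W_j$, so it depends on $s-W_j$. Your remark that only sub-Gaussian lower tails occur is therefore false for this anchoring. Decoupling by H\"older or tail-splitting costs constant factors. For instance, Cauchy--Schwarz gives $\log\mathbb E\me^{\theta V}\approx 5s\theta^2$ for the block maximum $V$, hence a leading constant $2\sqrt5>4$ in front of $\sqrt{s\log(\me r/s)}$ once $r/s$ is large. The proposal also never pins down the constant, which you leave open yourself. As written, it gives at best $O\bigl(\sqrt{s\log(\me r/s)}+\log(\me r/s)\bigr)$, not the stated inequality.

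The missing idea is to anchor at the common cut rather than at the first window. For $0\le a<s$,
\[
s-W_{j+a}=\sum_{h=a}^{s-1}(1-E_{j+h})+\sum_{h=0}^{a-1}(1-E_{j+s+h}).
\]
So the block maximum is at most $A+B$, where $A$ is the maximal partial sum of $1-E_{j+s-1},\dots,1-E_j$ (read backwards from the cut) and $B$ that of $1-E_{j+s},\dots,1-E_{j+2s-2}$. These use disjoint variables, since $2s-1\le r$, so $\mathbb E\me^{\theta(A+B)}$ factorises. With this split your sub-Gaussian intuition becomes correct in the case $s=k-2$. Doob's $L^2$ inequality for $\me^{\theta T_j}$, together with $\log\mathbb E\me^{tU}\le t^2/(2(1-t))$, gives
\[
\mathbb E\me^{\theta V}\le 4\exp\bigl((2s-1)\theta^2/(1-2\theta)\bigr).
\]
Then stay with moment generating functions instead of integrating tails: $\mathbb E\me^{\theta M}\le\lceil r/s\rceil\,\mathbb E\me^{\theta V}$. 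The paper instead uses cyclic averaging, $s\,\me^{\theta M}\le\sum_a\me^{\theta V_a}$, which gives exactly $r/s$. Apply Jensen and take $\theta=\sqrt a/(\sqrt v+2\sqrt a)$ with $v=2s-1$ and $a=\log(4\lceil r/s\rceil)$; this yields $2\sqrt{va}+2a$. Since $r/s\ge2$, you have $a\le 2\log(\me r/s)$ and $v\le 2s$, which gives the constant $4$. The complement case is identical with $U=E-1$, which satisfies the same bound $\log\mathbb E\me^{t(E-1)}\le t^2/(2(1-t))$.
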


To illustrate the obtained lower bound, we write its direct consequences  in some broad ranges of $k=k(r)$, also combining them with the upper bound from~\eqref{eq:(k-2)/r}.

\begin{corollary}\label{cor:applications}
	Let $k=k(r)$ be an integer with $3\le k\le r+1$.
	\begin{enumerate}[label=\textup{(\roman*)}]
\hide{		\item\label{it:Range0} If $k/\log r\to c\in(0,\infty)$, then
		\begin{equation*}
			1-4\left(c^{-1/2}+c^{-1}\right)+o(1)
			\le \frac{\pi(H_k^r)}{(k-2)/r}\le1.
		\end{equation*}
	}
		\item\label{it:Range1} If $k/\log r\to\infty$, then
		\begin{equation*}
			\pi(H_k^r)=(1+o(1))\frac{k}{r}.
		\end{equation*}
		\item\label{it:Range2} If $k/r\to\alpha\in(0,1)$ and $\beta:=\min\{\alpha,1-\alpha\}$, then
		\begin{equation*}
			0\le\frac{k-2}{r}-\pi(H_k^r)
			\le\frac{4\sqrt{\beta\log(\me /\beta)}+o(1)}{\sqrt r}.
		\end{equation*}
	\end{enumerate}
\end{corollary}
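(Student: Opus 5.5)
Both parts follow by combining Theorem~\ref{thm:main} with the upper bound~\eqref{eq:(k-2)/r}; what remains is to check that the error term
\[
E:=\frac4r\left(\sqrt{s\log\frac{\me r}{s}}+\log\frac{\me r}{s}\right),\qquad s=\min\{k-2,r-k+2\},
\]
is negligible on the appropriate scale. The upper bound gives $\pi(H_k^r)\le (k-2)/r$ in both parts, and Theorem~\ref{thm:main} gives $\pi(H_k^r)\ge (k-2)/r-E$. So it suffices to prove $E=o(k/r)$ in part~\ref{it:Range1} and $E\le (4\sqrt{\beta\log(\me/\beta)}+o(1))/\sqrt r$ in part~\ref{it:Range2}. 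The left inequality $0\le (k-2)/r-\pi(H_k^r)$ in~\ref{it:Range2} is just~\eqref{eq:(k-2)/r}.

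For part~\ref{it:Range1}, note first that $k\to\infty$, so $(k-2)/r=(1+o(1))k/r$. We have $1\le s\le k-2$, and $\log(\me r/s)\le 1+\log r$. Hence $\sqrt{s\log(\me r/s)}\le \sqrt{k(1+\log r)}=k\sqrt{(1+\log r)/k}=o(k)$, since $k/\log r\to\infty$. Also $\log(\me r/s)\le 1+\log r=o(k)$. Therefore $rE=o(k)$, which gives $\pi(H_k^r)\ge (1-o(1))k/r$, and the upper bound completes the claim. The small subtlety is that $s$ could be as small as $1$ when $k$ is close to $r+1$. The crude bound $\log(\me r/s)\le 1+\log r$ handles every $s$ uniformly, so no case split is needed.

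For part~\ref{it:Range2}, we have $k-2=\alpha r+o(r)$ and $r-k+2=(1-\alpha)r+o(r)$, so $s=\beta r+o(r)$ and $s/r\to\beta\in(0,1/2]$. The function $x\mapsto x\log(\me/x)$ is continuous on $(0,1]$. Hence
\[
s\log\frac{\me r}{s}=r\cdot\frac sr\log\frac{\me}{s/r}=r\bigl(\beta\log(\me/\beta)+o(1)\bigr),
\]
so $\frac4r\sqrt{s\log(\me r/s)}=\bigl(4\sqrt{\beta\log(\me/\beta)}+o(1)\bigr)/\sqrt r$. The second term satisfies $\frac4r\log(\me r/s)=\frac4r\bigl(\log(\me/\beta)+o(1)\bigr)=O(1/r)=o(1/\sqrt r)$. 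Adding the two terms gives the stated bound.

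There is no real obstacle here. The only points needing care are the uniform handling of small $s$ in part~\ref{it:Range1}, and the continuity argument giving $s/r\to\beta$ in part~\ref{it:Range2}, which uses $\beta>0$ so that $\log(\me/\beta)$ is finite.
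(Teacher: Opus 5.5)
Your proof is correct and follows the paper's argument: combine \eqref{eq:(k-2)/r} with Theorem~\ref{thm:main}, then check the size of the error term in each range. The only difference is cosmetic: in part~\ref{it:Range1} you apply the crude bound $\log(\me r/s)\le 1+\log r$ uniformly, whereas the paper splits into the cases $k-2\le r/2$ and $k-2>r/2$ and uses that same crude bound only in the second case.
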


Thus, if $k$ grows faster than $\log r$ then we know the value of  $\pi(H_k^r)$ asymptotically.
Also, the error term can be made more precise for larger $k$, with an example of such an estimate given by Item~\ref{it:Range2} above.

In the remaining range $k=O(\log r)$, one can get bounds that are within a constant factor of each other by taking the unions of $r$-graphs returned by Theorem~\ref{thm:mainPoisson}; specifically, the bounds are as follows.

\begin{corollary}\label{cor:half-optimal}
	Let $k=k(r)$ be an integer with $3\le k\le r+1$. Then for every $\varepsilon>0$ there is $r_0$ such that, for all $r\ge r_0$, 
	\begin{equation}\label{eq:half-optimal}
		\pi(H_k^r)\ge \left(\frac1{2\me}-\varepsilon\right)\frac{k-2}{r}.
	\end{equation}
	Moreover, if $k(r)\to\infty$ as $r\to \infty$, then, for all sufficiently large $r$,~\eqref{eq:half-optimal} can be strengthened to $\pi(H_k^r)\ge(\frac12-\varepsilon)\frac{k}{r}$.
\end{corollary}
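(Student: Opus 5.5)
The plan is to derive both bounds from Theorem~\ref{thm:mainPoisson} through a \emph{union} construction, and to cover the range of large $k$ with Theorem~\ref{thm:main}.

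\textbf{Union lemma.} First I would prove the following. Suppose $k_1,\dots,k_m\ge 3$ and $k-1\ge \sum_{i=1}^m (k_i-1)$. Then
\[
\pi(H_k^r)\ge \sum_i \pi(H_{k_i}^r)-\Bigl(\sum_i \pi(H_{k_i}^r)\Bigr)^2 .
\]
The proof has two parts.
\begin{enumerate}
\item \emph{Freeness.} Let $G_i$ be an extremal $H_{k_i}^r$-free $r$-graph on $[n]$, and let $G$ be the union of the $G_i$ after applying independent uniformly random permutations of $[n]$. Any $(r+1)$-set spanning $k$ edges of $G$ spans, by pigeonhole, at least $k_i$ edges of some $G_i$, since $k>\sum_i(k_i-1)$. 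That is impossible, so $G$ is $H_k^r$-free.
\item \emph{Density.} By Bonferroni, $|G|\ge\sum_i|G_i|-\sum_{i<j}|G_i\cap G_j|$. For fixed $i\ne j$ we have $\mathbb E|G_i\cap G_j|=|G_i||G_j|/\binom nr$. Hence some choice of permutations gives density at least $\sum d_i-\sum_{i<j}d_id_j$, and letting $n\to\infty$ gives the lemma.
\end{enumerate}

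\textbf{First bound.} Take $k_i=3$ for all $i$ and $m:=\lfloor (k-1)/2\rfloor\ge (k-2)/2$. By Theorem~\ref{thm:mainPoisson} with $c_1=\me^{-1}$, for $r\ge r_0(\varepsilon)$ we have $\pi(H_3^r)\ge(\me^{-1}-\varepsilon/2)/r$. The lemma then gives
\[
\pi(H_k^r)\ge \frac{m}{r}\Bigl(\frac1{\me}-\frac\varepsilon2\Bigr)-\frac{m^2}{r^2}.
\]
The quadratic term is negligible when $m\le \delta r$ for small $\delta=\delta(\varepsilon)$, and this yields~\eqref{eq:half-optimal}.

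For large $k$ I would not use unions. If $k\ge K\log r$ with $K=K(\varepsilon)$ large, Theorem~\ref{thm:main} gives
\[
\pi(H_k^r)\ge \frac{k-2}{r}\bigl(1-O(K^{-1/2})\bigr),
\]
since $s\le k-2$ and $\log(\me r/s)=O(\log r)$. This exceeds both required bounds. So I only need the union construction for $k\le K\log r$, where $m=O(\log r)=o(r)$ and the quadratic loss is harmless. The bound $\pi(H_k^r)\le (k-2)/r$ shows the claims are consistent with~\eqref{eq:(k-2)/r}, but I do not need it here.

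\textbf{Strengthening when $k\to\infty$.} Fix a large odd $k_0$ and put $h:=(k_0-1)/2$.
\begin{enumerate}
\item I claim $c_h\ge(1-\varepsilon/4)h$ for $h$ large. Take $\lambda:=h-h^{2/3}$. By Chebyshev applied to the Poisson variable $X\sim\mathrm{Po}(\lambda)$, $\Pr(X\le h-1)\to1$, so $c_h\ge\lambda\Pr(X\le h-1)=(1-o(1))h$.
\item By Theorem~\ref{thm:mainPoisson}, which is applied only for the single fixed $k_0$ so $r_0$ depends only on $\varepsilon$, we get $\pi(H_{k_0}^r)\ge(1-\varepsilon/2)(k_0-1)/(2r)$ for $r\ge r_0$.
\item Take $m:=\lfloor (k-1)/(k_0-1)\rfloor$ copies. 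This is legal because $m(k_0-1)\le k-1$. It gives
\[
\pi(H_k^r)\ge (1-\varepsilon/2)\frac{m(k_0-1)}{2r}-O\Bigl(\frac{k^2}{r^2}\Bigr).
\]
\item Since $k\to\infty$ while $k_0$ is fixed, $m(k_0-1)\ge k-k_0=(1-o(1))k$. Together with $k\le K\log r$, the error term is $o(k/r)$, so the bound becomes $(\frac12-\varepsilon)k/r$.
\end{enumerate}

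\textbf{Main obstacle.} I expect the main care to go into the bookkeeping between regimes. The thresholds $K$, $k_0$ and $r_0$ must be chosen in the right order ($\varepsilon\to K,k_0\to r_0$) so that the quadratic union loss and the rounding of $m$ stay uniformly small across all $k$. The Poisson estimate $c_h\sim h$ also needs a clean proof.
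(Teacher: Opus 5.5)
Your proposal is correct and follows essentially the paper's argument: unions of independently permuted copies of the $H_3^r$-free (respectively $H_{2h+1}^r$-free) $r$-graphs from Theorem~\ref{thm:mainPoisson} for small $k$, Theorem~\ref{thm:main} for the remaining $k$, and $\lambda=h-h^{2/3}$ with Chebyshev's inequality to get $c_h=(1-o(1))h$; the paper cuts at $k\le\sqrt r$ rather than $k\le K\log r$, which makes no difference. One bookkeeping slip: you say~\eqref{eq:(k-2)/r} is not needed, but bounding the quadratic loss $\bigl(\sum_i\pi(H_{k_i}^r)\bigr)^2$ by $O(m^2/r^2)$ does require an upper bound such as $\pi(H_3^r)\le 1/r$ — or else, as the paper does, first deleting edges so that each copy has exactly the target density.
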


Thus, by~\eqref{eq:(k-2)/r}, we know $\pi(H_k^r)$ within a multiplicative factor $2\me+o(1)$ uniformly in~$k\in\{3,\dots,r+1\}$ and this factor can be reduced to $2+o(1)$ if $k\to\infty$. Independently of us, Pickhardt~\cite{Pickhardt26} proved $\pi(H_3^r)=\Theta(1/r)$; combined with the same union argument, this also gives $\pi(H_k^r)\ge ck/r$ for an absolute constant $c>0$. (Also, some weaker versions of Theorems~\ref{thm:mainPoisson} and~\ref{thm:main} appeared previously  in a preprint~\cite{YangZhu26x} by the last two authors; this preprint is now fully superseded by the current paper.)

Next, we turn our attention to another important class of sequences of forbidden $r$-graphs. 
For a fixed $t$-graph $F$ and an integer $r\ge t$, let the \emph{$r$-augmentation} $F(r)$ of $F$ be the $r$-graph obtained by adjoining the same set of $r-t$ new vertices to every edge of $F$. This construction is also known in the literature under other names (\emph{$r$-suspension}, \emph{$(r-t)$-fold enlargement}, \emph{(iterated) cone}, etc). There is a large number of results on the Tur\'an function of these $r$-graphs and we do not attempt to survey them here. A connection to the previous results comes from the observation that
$H_k^r$ is the $r$-augmentation of $K_{k}^{k-1}$. 
The augmentations of general complete $t$-graphs $K_k^{t}$ are called \emph{daisies}, a term introduced by Bollob\'as, Leader and Malvenuto~\cite{BollobasLeaderMalvenuto11} who initiated a systematic study of the Tur\'an densities of daisies.
A breakthrough in this area was recently obtained by Ellis, Ivan and Leader~\cite[Theorem~4]{EllisIvanLeader24} who proved, in particular, that $\pi(K_{4}^2(r))$ is bounded away from 0.
This result and the underlying method have already found many applications, including those in the original paper~\cite{EllisIvanLeader24} and the subsequent applications found by Alon~\cite{Alon24} (to erasure list-decodable codes),  Grebennikov and Marciano~\cite{GrebennikovMarciano25} (to edge-sets in hypercubes avoiding a 10-cycle), van der Pol, Walsh and Wigal~\cite{PolWalshWigal25} (to maximising the number of bases in matroids
), Axenovich and Liu~\cite{AxenovichLiu26} (to visibility in hypercubes), etc.

If $t=2$, that is, $F$ is a graph, then the proof in~\cite{EllisIvanLeader24} shows that $\pi(F(r))$ is bounded away from $0$ not only when $F=K_4^2$ is the 4-clique but also when $F$ is an arbitrary graph of chromatic number at least $4$ (see~\cite[Page~3852]{EllisIvanLeader24}). If $F$ is a bipartite graph then the $r$-graph $F(r)$ is $r$-partite, which implies by the classical result of Erd\H os~\cite{Erdos64} that $\pi(F(r))=0$. If $F$ is $3$-chromatic then e.g.\ the more general result of
Cheng, Gerbner, Hama Karim and Zhou~\cite[Theorem 1.2]{ChengGerbnerHamaKarimZhou26} 
implies that $\pi(F(r))\le \pi(H_3^r)$; thus, by~\eqref{eq:(k-2)/r}, we have $\pi(F(r))\le 1/r$.

The following result shows that this is tight  up to a factor $(\log r)^{1+o(1)}$.

\begin{theorem}\label{end:thm:augmentations}
	For every fixed graph $F$ of chromatic number $3$, it holds that
 		\begin{equation*}\label{eq:three-chromatic}
		\pi(F(r))\ge \frac{1}{r(\log r)^{1+o(1)}},\quad \text{as $r\to\infty$}.
	\end{equation*}
\end{theorem}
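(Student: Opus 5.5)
The plan is to reduce to odd cycles and then give an explicit random-geometric construction whose links are close to bipartite. Since $\chi(F)=3$, the graph $F$ contains an odd cycle $C_{2\ell+1}$ with $2\ell+1\le v(F)$. Hence $F(r)\supseteq C_{2\ell+1}(r)$, and it suffices to build, for each fixed $L$, $r$-graphs of density at least $r^{-1}(\log r)^{-1-o(1)}$ that contain no $C_{2\ell+1}(r)$ with $2\ell+1\le L$.

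A copy of $C_{2\ell+1}(r)$ consists of an $(r-2)$-set $S$ and an odd cycle $x_1x_2\dots x_{2\ell+1}$ outside $S$ with $S\cup\{x_i,x_{i+1}\}$ an edge for every $i$. So the requirement is local: in every $(r-2)$-set link graph $G_S=\{xy: S\cup\{x,y\}\in E\}$ there is no odd cycle of length at most $L$. The cleanest way to guarantee this is to make each $G_S$ bipartite outright. I would aim for an edge rule of the form ``$e$ is an edge iff it contains a \emph{designated pair} $\{u,v\}$, canonically determined by $e$, with $u,v$ in different colour classes''. Then an edge of $G_S$ corresponds to a designated pair that is cross-coloured.

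Concretely, I would take vertices to be i.i.d.\ uniform points on the circle $\mathbb R/\mathbb Z$, each with an independent random colour in $\Z{2}$. I would declare $e$ an edge if the following hold:
\begin{enumerate}
\item its minimal circular gap $g$ is smaller than $\delta/r$, where $\delta\approx 1/\log r$ is to be chosen;
\item every other gap of $e$ is at least $D\,g$ for a large threshold $D$;
\item the two endpoints of the minimal gap receive different colours.
\end{enumerate}

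The key structural claim is this: if $S\cup\{x,y\}$ is an edge and the points of $S$ are far apart relative to the tiny designated gap, then the designated pair must be $\{x,y\}$ itself, or else it lies inside $S$ and then $x,y$ are constrained to lie far from $S$ in a rigid way. In the first case $G_S$ only joins differently coloured vertices, so it is bipartite. In the second case I would show that all edges of $G_S$ are forced to lie within a configuration (via the separation condition 2) that cannot close an odd cycle of bounded length. If this fails, I would fall back to a weaker but sufficient property: an odd cycle of length at most $L$ in $G_S$ forces $O(L)$ points into a window of length $O(L\delta/r)$, contradicting condition 2 for one of the cycle edges.

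The density computation is the main obstacle. Among $r$ uniform points the gaps are roughly i.i.d.\ exponential with mean $1/r$ (normalised exponential representation, as used for Theorem~\ref{thm:main}). The probability that the minimum gap is below $\delta/r$ is about $\delta$ per pair, giving a factor of about $r\cdot\delta\cdot(\text{stuff})$. Requiring all other gaps to exceed $D$ times it costs a further factor. To get a total of order $1/r$ up to the $(\log r)^{1+o(1)}$ loss, I would tune $\delta$ and $D$ so that condition 1 has probability of order $(\log r)^{-1}$ relative to $r^{-1}\cdot r$ and condition 2 holds with constant probability, while keeping the separation strong enough to kill cycles of length up to $L$.

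If direct tuning does not reach $1/(r\log r)$, I would take unions of $O(\log r)$ independent such constructions at different scales $\delta_j=2^{-j}$, as in Corollary~\ref{cor:half-optimal}. Each scale contributes about $1/(r\log r)$, and a copy of $C_{2\ell+1}(r)$ must then use edges from different scales, which I would rule out using the scale separation. I expect the bookkeeping here, namely showing that cross-scale odd cycles are impossible while losing only $(\log r)^{o(1)}$, to be the hardest step.
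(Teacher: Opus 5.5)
\textbf{There is a genuine gap: the construction is not $F(r)$-free for any graph $F$.}

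Your reduction to a single odd cycle $C\subseteq F$ is fine. Odd cycles are connected and non-bipartite, hence have zero kernel, which is compatible with the paper's reduction. The problem is that the link graphs $G_S$ are far from bipartite. In fact your $r$-graph contains $K_s^2(r)$ for every fixed $s$, and hence $F(r)$ for every graph $F$.

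The failure is in the case you dismiss, where the designated pair lies in $S$. You also overlook the mixed case $\{u,x\}$ with $u\in S$.

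\begin{itemize}
\item \emph{Counterexample.} Take an $(r-2)$-set $S$ with two consecutive points $u,v$ of distinct colours at distance $g<\delta/r$, and all other gaps of $S$ longer than $2Dg$. Let $x_1,\dots,x_s$ be the midpoints of $s$ distinct long gaps of $S$. Then every $S\cup\{x_i,x_j\}$ satisfies conditions 1--3 with the same designated pair $\{u,v\}$, so $G_S\supseteq K_s$.
\item \emph{It really occurs.} These are open conditions plus one colour constraint. So such a configuration appears with positive probability among $r-2+s$ i.i.d.\ points, and hence almost surely among $n\to\infty$ points, whatever $\delta$ and $D$ are.
\item \emph{The fallbacks fail too.} The same example refutes your claim about $O(L)$ points in a short window. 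Taking unions over scales only adds edges, so it cannot repair this.
\item \emph{Why this is structural.} Fix an edge $e$ and a base pair $\{x,y\}\subseteq e$. A canonically chosen pair of $e$ lies in the stem $e\setminus\{x,y\}$ for a $1-O(1/r)$ fraction of the choices of $\{x,y\}$. So a rule of the form ``the designated pair is bichromatic'' puts essentially no constraint on the base vertices.
\item \emph{Density check.} The minimum gap of $r$ random points is typically of order $r^{-2}$, so condition 1 is nearly automatic. The second-smallest gap exceeds $D$ times the smallest with probability about $1/D$. Your density is therefore about $1/(2D)$. For the bounded $D$ you intend, this even exceeds the known bound $\pi(F(r))\le\pi(H_3^r)\le 1/r$.
\end{itemize}

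\textbf{What is missing.} You need an edge condition to which the common stem contributes identically in every edge of a copy of $F(r)$, so that it cancels. The paper does this in the following steps.
\begin{itemize}
\item It uses the checksum $\sigma_1(A)=\sum_{v\in A}a(v)\in\mathbb F_q$ for a random $a:[n]\to\mathbb F_q$ with $q=\Theta(r/\log r)$.
\item Equal values on the edges $D\cup e$, $e\in E(F)$, make all base-edge sums equal.
\item The zero-kernel property then forces all base vertices into a single fibre $a^{-1}(z)$. For an odd cycle this is just the alternating-sum argument around the cycle.
\item Typical $r$-sets meet every fibre in at most $\ell=\lceil 8\log r\rceil$ points.
\item A second checksum $\sum_z\varphi_{|A_z|}(A_z)\in\mathbb Z_m$ is built from random $F(j)$-free partial colourings with $m=O(\log q)$ colours. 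It turns a copy of $F(r)$ into a monochromatic copy of $F(t')$ with $t'\le\ell$ inside that fibre, which is impossible.
\item This yields $\pi(F(r))\ge\frac{3}{64r}\,\pi(F(\lceil 8\log r\rceil))$.
\item Three iterations, bottoming out at $\pi(F(j))\ge j!/j^j$, give $1/(r(\log r)^{1+o(1)})$.
\end{itemize}
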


When one analyses the proof of Theorem~\ref{end:thm:augmentations} and tries to abstract the properties of the graph $F$ that are needed, the following property of general $t$-graphs naturally appears. We say that a $t$-graph $F$ has \emph{zero kernel} if its edge--vertex incidence matrix $B_F$ has zero kernel (or, equivalently, full column rank)  over $\mathbb R$. It is easy to see (see Lemma~\ref{lem:app-graph-rank}) that every connected non-bipartite graph $F$ has zero kernel. Since every graph of chromatic number at least 3 has such a component, Theorem~\ref{end:thm:augmentations} follows from the following more general result.

\begin{theorem}\label{end:thm:main}
	Let $t\ge 2$ and $F$ be a fixed non-empty $t$-graph such that $B_F$ has zero kernel over~$\mathbb R$.
	Then 
	\begin{equation*}\label{eq:full-rank-lower}
		\pi(F(r))\ge
		\frac{1}{r(\log r)^{1+o(1)}},
		\quad\text{as }r\to\infty.
	\end{equation*}
\end{theorem}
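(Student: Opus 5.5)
We give a random algebraic construction: a single weight function on vertices, each $r$-set becoming an edge according to where the sum of its weights falls. Fix $t\ge2$ and $F$ with $v$ vertices, $e$ edges, and $B_F$ of full column rank. Choose a prime (or real modulus) $p$ and assign to each vertex of the host $r$-graph an independent uniform weight $x_u\in\Z{p}$ (or uniform in the circle $\mathbb R/\mathbb Z$). Declare an $r$-set $S$ an edge iff $\sum_{u\in S}x_u$ lies in a fixed ``window'' $W$ of measure about $\delta$.

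Suppose a copy of $F(r)$ sits inside, with core $C$ (the $r-t$ common vertices) and $F$ on $v$ further vertices. Then for every edge $f$ of $F$ we have $\sigma_C+\sum_{u\in f}x_u\in W$, where $\sigma_C:=\sum_{u\in C}x_u$. The zero-kernel hypothesis is the lever. Since $B_F$ has full column rank, the vector $(\sum_{u\in f}x_u)_f=B_F\mathbf x$ determines $\mathbf x$. Moreover, adding the all-ones shift $\sigma_C\mathbf 1$ changes things only along one direction. So if all $e$ coordinates of $B_F\mathbf x+\sigma_C\mathbf 1$ lie in a short window, then $\mathbf x$ is pinned to a small neighbourhood of a one-parameter family $\mathbf x\approx s\mathbf y_0$, where $\mathbf y_0$ is a fixed solution of $B_F\mathbf y_0=-\mathbf 1$ (or $\mathbf x\approx 0$ if $\mathbf 1\notin\operatorname{im}B_F$), with error $O(\delta)$ in each coordinate.

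The construction must forbid this configuration deterministically. To that end, replace uniform weights by a structured choice:
\begin{enumerate}
\item Partition the vertex set into parts, and give vertices in part $i$ weights from a set $A_i$, chosen so that no $v$ vertices of the types needed to form $F$ have weights near a point $s\mathbf y_0$ in the required way.
\item Concretely, work on the circle and let each vertex's weight be a multiple of a small step, with $O(\log r)$ scales. Taking $F$-vertices pairwise ``distinct up to $O(\delta)$'' forces the coordinates of $\mathbf x$ to satisfy rational linear relations coming from $\mathbf y_0$. These relations can be violated by a Behrend/Sidon-type or generic choice.
\end{enumerate}

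The density is then the probability that a random $r$-set has its weight sum in $W$. With weights spread so that the sum of $r$ terms is roughly equidistributed on a circle of length $L\approx r\cdot(\text{spread})$, this probability is about $|W|/L$. Balancing the forced window width against the spread should give density $1/(r\log^{1+o(1)} r)$. The $\log r$ loss comes from needing the $F$-configuration to be unavoidable-free across all $\sim\log r$ scales, or equivalently from a union bound over possible cores.

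Finally, there is the bookkeeping of blow-ups and density, with $\pi$ attained via Lemma~\ref{end:lem:finite-density}, since $F(r)$-freeness of a blow-up-type construction passes to the limit. The main obstacle is step 2: the core $C$ is arbitrary, so $\sigma_C$ is essentially a free parameter. We must therefore rule out every translate simultaneously while keeping the window of width $\gtrsim 1/(r\log r)$ relative to the sum's spread. My first attempt would use a two-level weighting: a coarse weight giving the $1/r$-density window, and a fine generic weight such that the full-column-rank condition makes the $v$ vertex weights determined up to the single shift parameter. Distinctness then forbids it.
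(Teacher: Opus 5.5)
There is a genuine gap, and it sits exactly at the step you call the main obstacle. Every row of $B_F$ has exactly $t$ ones, so $B_F\mathbf 1=t\mathbf 1$. Hence $\mathbf 1$ always lies in the image, and by the zero-kernel assumption your $\mathbf y_0$ is exactly $-\mathbf 1/t$. So the lever only tells you that the base vertices of a copy of $F(r)$ have (nearly) \emph{equal} weights. There are no other linear relations for a generic or Behrend/Sidon-type choice to violate.

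This configuration cannot be avoided by choosing the weights. On $n\to\infty$ vertices, pigeonhole gives arbitrarily many vertices whose weights lie within any prescribed $\eta$ of each other. Take an edge $S$ containing $t$ of them, with $\sum_{u\in S}x_u$ farther than $2t\eta$ from the boundary of $W$. It yields a copy of $F(r)$, with the other $r-t$ vertices of $S$ as stem and $v$ nearly-equal-weight vertices as base. So the configuration must be excluded at the level of edges, and your ``distinctness'' step costs a birthday-type factor. A window of width $\delta$ pins base weights only up to $O_F(\delta)$, so no edge may contain $t$ weights that close. That needs about $\delta^{-1}\gtrsim r^{t/(t-1)}$ distinguishable levels, so this route gives density of order at most about $r^{-t/(t-1)}$, i.e.\ roughly $r^{-2}$ for graphs.

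A second, finer sum-type weight does not help: the pair of weights is again one additive checksum in a product group, with the same obstruction. Your density heuristic, and the attribution of the $\log r$ loss to ``scales'' or a union bound over cores, are not backed by any mechanism. Also, for general $F$ (e.g.\ $C_5$) two base vertices need not share an edge of $F(r)$. So $F(r)$-freeness is not inherited by blow-ups, and the construction must be built directly on $[n]$ for every $n$.

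What is missing is a way to tolerate many equally coloured vertices inside an edge. The paper takes $\CSOne(A)=\sum_{v\in A}a(v)\in\mathbb F_q$ with only $q\approx r/\ell$ colours, so an edge may contain up to $\ell=\lceil 8\log r\rceil$ vertices of one colour. It then kills the monochromatic configuration with a \emph{non-additive} second checksum $\CSTwo(A)=\sum_{z}\varphi_{|A_z|}(A_z)\in\Z{m}$. Here each $\varphi_j$ colours almost all $j$-sets with $m=O(\log q/\pi(F(\ell)))$ colours, every colour class being $F(j)$-free (Lemma~\ref{lem:app-global-colouring}).

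All edges of a copy of $F(r)$ contain the same stem vertices in every fibre except the one holding the (forced monochromatic) base. So equal $\CSTwo$-values cancel to a copy of $F(t')$, with $t'\le\ell$, inside a single colour class of $\varphi_{t'}$, a contradiction. This gives the recursion $\pi(F(r))\ge 3\pi(F(\ell))/(64r)$ (Theorem~\ref{thm:app-lifting} and Lemma~\ref{lem:app-one-step}). Iterating it three times down to the trivial bound $\pi(F(j))\ge j!/j^j\ge \me^{-j}$ produces the factor $(\log r)^{1+o(1)}$. Your plan has no substitute for this reduction to the same problem at uniformity $O(\log r)$ inside each colour class.
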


This is sharp up to a $(\log r)^{1+o(1)}$ factor if, for example, the $t$-graph $F$ admits a homomorphism to $K_{t+1}^t$ (that is, admits a vertex colouring with $t+1$ colours in which every edge is rainbow). Indeed, then the $r$-augmentation $F(r)$ admits a homomorphism to $K_{t+1}^t(r)=H_{t+1}^r$. This implies via super-saturation (see e.g.~\cite[Theorem 2.2]{Keevash11}) that
$\pi(F(r))\le 
\pi(H_{t+1}^r)$, which by~\eqref{eq:(k-2)/r} is at most $(t-1)/{r}=O(1/r)$.

\medskip\noindent\textbf{Organization of the paper.} Section~\ref{sec:notation} defines some commonly used terms and notions. 
In Section~\ref{sec:nolog},  we prove Theorem~\ref{thm:mainPoisson} using a probabilistic construction involving finite geometries.
In Section \ref{sec:3}, we describe Sidorenko's circular construction and then prove Theorem \ref{thm:main} and Corollary \ref{cor:applications} (that estimate the lower bounds on $\pi(H_k^r)$ coming from the construction) as well as Corollary~\ref{cor:half-optimal} (that extends the bounds to the remaining range $k=O(\log r)$). 
Theorem~\ref{end:thm:main} is proved in Section~\ref{sec:2} via a different probabilistic construction.
As we already mentioned, Theorem~\ref{end:thm:augmentations} directly follows from Theorem~\ref{end:thm:main} when combined with the easy fact (proved in Lemma~\ref{lem:app-graph-rank}) that every connected non-bipartite graph has the zero kernel property. Finally, Section~\ref{sec:concluding} contains some concluding remarks and open problems.

\hide{
In Section \ref{sec:3}, we prove Theorem \ref{thm:main} and Corollary \ref{cor:applications}.  
Sidorenko's circular construction and the normalized exponential representation reduce the required lower bound to estimating the minimum of cyclic sums of independent exponential random variables.  
After centering these variables, we average over all cyclic starting positions and apply an exponential-moment estimate for ordinary partial sums.  
This gives a uniform estimate for the minimum when $m\le r/2$.  When $m>r/2$, a complement identity reduces the problem to cyclic sums of length $r-m$.  Combining the resulting lower bound with the known upper bound proves the finite estimate in Theorem \ref{thm:main}. We later use Theorem \ref{thm:main} to prove Corollary \ref{cor:applications}.
}

\hide{
Let us give a proof outline of the lower bound for $H_3^r$. We choose integers $q$, $\ell$ and $\PP$ depending on $r$. Given $n$, we define two random checksums $\CSOne$ and $\CSTwo$ on $r$-subsets of $[n]$ with values in $\Z{q}$ and $\Z{\PP}$ such that for every $(\kappa_1,\kappa_2)\in \Z{q}\times\Z{\PP}$ the $r$-graph 
\[
\F{\kappa_1}{\kappa_2}:=\left\{A\in\binom{[n]}r : \CSOne(A)=\kappa_1\mbox{ and } \CSTwo(A)=\kappa_2\right\}
\]
 is $H_3^r$-free deterministically (i.e.\ regardless of all the random choices). Then we can take an outcome when the size of $\F{\kappa_1}{\kappa_2}$ is at least the expected value.
 
The first checksum is easy to define: we take a random function $a:[n]\to \Z{q}$ and let $\CSOne(A):=\sum_{v\in A} a(v)$. Instead, we could have fixed an (almost) balanced $q$-partition $[n]=V_0\cup\ldots\cup V_{q-1}$ with the first checksum then being $\sum_{b=0}^{q-1} b\,|V_b\cap A|$ taken modulo $q$; our decision to take a random function $a$ is just a matter of convenience. This idea is abundant in combinatorics; in particular, it appears in the papers~\cite{KimRoush83,FranklRodl85gc,Sidorenko97} giving lower bounds on $\pi(H_{r+1}^r)$.

The key property is that if we take three $r$-sets $A_1,A_2,A_3$ forming a copy of $H_3^r$ with equal first checksums, then the three base vertices $v_1,v_2,v_3$ have the same colour, denote it by $b\in\Z{q}$. Some elements of the stem $D_0$ may also have the colour $b$ so  we let $D:=\{v\in D_0: a(v)=b\}$ and $j:=|D|+2$. Thus if we let $U=D\cup\{v_1,v_2,v_3\}$ then $U\setminus\{v_i\}$ for $i\in [3]$ form a copy of $H_3^j$ with all vertices having the same colour~$b$.

Suppose that $r/q$ grows at least logarithmically in $r$. Then for a `typical' $r$-set $A$ one expects that each colour class inside $A$ has at most $\ell:=Cr/q$ elements for some constant $C>1$. So we can exclude all `atypical' $r$-sets as possible $r$-edges from consideration.

Now, our remaining task is to exclude every $a$-monochromatic copy of $H_3^j$ with $j\le \ell$  lying within a $\CSOne$-monochromatic copy of $H_3^r$. An important observation is that we can choose parameters so that $\ell$ is much smaller than $r$ (with $\ell=O(\log r)$ in the presented proof); thus an extra factor such as $1/\ell^2$ coming from Sidorenko's lower bound is only $1/r^{o(1)}$.

A key idea is to take, for each $j=2,\ldots,\ell$, a partial colouring $\varphi_j:\binom{[n]}j\to L\cup\{\infty\}$ with each finite colour class, as a $j$-graph, being $H_3^j$-free and take the second checksum to be
\begin{equation}\label{eq:CSTwo}
\CSTwo(A):=\sum_{z\in\Z{q}\atop 
|A\cap a^{-1}(z)|\ge 2} \varphi_{|A\cap a^{-1}(z)|}(A\cap a^{-1}(z)),
\end{equation}
 where $L=\Z{\PP}$ is the additive group of residues modulo $\PP$. For a `typical' $r$-set, $|A\cap a^{-1}(z)|\le\ell$ for every $z\in\Z{q}$. Define the right-hand side of~\eqref{eq:CSTwo} only when all its summands are finite. This definition indeed gives that $\F{\kappa_1}{\kappa_2}$ is always $H_3^r$-free. Indeed, if we have a copy of $H_3^r$ then, 
 in the above notation,
 the equality of the checksums $\CSTwo(A_i)$ for $i\in [3]$ implies that the sets $U\setminus \{v_i\}$ for $i\in [3]$ form an edge-monochromatic copy of $H_3^j$ under $\varphi_j$.

Averaging over the checksum values gives expected edge density equal to $\frac1q\cdot\frac1{|L|}$ times the probability that an $r$-set passes the eligibility tests. While we may potentially hope that $|L|=O(\ell^2)$ suffices since Sidorenko's result gives us $H_3^j$-free $j$-graphs of density $\Omega(1/j^2)$, these graphs are not well suited for covering the whole of  $\binom{[n]}j$ and the number of the required copies seems to grow with $n$. The final idea is that we do not need to cover the whole of $\binom{[n]}j$, only a large fraction of it so that the new enlarged set of those $A\in\binom{[n]}r$ for which $\CSTwo(A)$ is undefined has at most, say, $\frac12\binom nr$ elements. 

The combination of the above ideas indeed works. This is a greatly streamlined version of the initial proof suggested by ChatGPT that had various unnecessary complications.
}

\section{Some notation and conventions}\label{sec:notation}

For a positive integer $n$, we denote $[n]:=\{1,\dots,n\}$ (while $[0]$ denotes the empty set) and let $\Z{n}$ denote the additive group of residues modulo $n$. For a prime power $q$, let $\mathbb F_q$ denote the finite field with $q$ elements.
For integers $n,r\ge 0$, the \emph{falling factorial} is $(n)_r:=\prod_{i=0}^{r-1}(n-i)$. An empty product (resp.\ sum) is assumed to have value $1$ (resp.\ $0$).
When we write $\log$, we mean the natural logarithm (i.e.\ with base $\me=2.718...$).

The \emph{argument} (or \emph{angle}) of a non-zero complex number $z=\rho \me^{\mathrm{i}\theta}\in\mathbb C$ with $(\rho,\theta)\in (0,\infty)\times [0,2\pi)$ is 
$\arg(z):=\theta$; thus its value lies in $[0,2\pi)$.

The \emph{binomial distribution} $\mathrm{Bin}(n,p)$ takes value $i\in\{0,\dots,n\}$ with probability $\binom{n}{i}p^i(1-p)^{n-i}$. 
Probability and expectation are denoted by $\mathbb P$ and $\mathbb E$, respectively.

Let $G$ be an $r$-graph. Its vertex and edge sets are denoted by $V(G)$ and $E(G)$.  The number of edges in $G$ is denoted by $|G|$. We call $G$ \emph{non-empty} if $|G|>0$. 
Denoting the vertices of $G$ by $v_1,\dots,v_n$, 
a \emph{blowup} of $G$ is an $r$-graph $G'$ on the union of some pairwise disjoint (possibly empty) sets $V_1,\dots,V_n$ where, for every edge $\{v_{i_1},\dots,v_{i_r}\}\in G$, we add the complete $r$-partite $r$-graph with parts $V_{i_1},\dots,V_{i_r}$ to the edge set of $G'$.

Let $\mathbf 1$ (resp.\ $\V{0}$) denote the all-$1$ (resp.\ all-$0$) vector; its index set should always be clear from the context. The \emph{kernel} $\{\V{v}: M\V{v}=\V{0}\}$ of a matrix $M$ is denoted by $\ker M$. Unless stated otherwise, the underlying field in linear algebra statements is~$\mathbb R$.

\section{Lower bounds on \texorpdfstring{$H_k^r$}{Hkr} via caps and random maps}\label{sec:poisson}\label{sec:nolog}

In this section, we prove Theorem~\ref{thm:mainPoisson}, using random linear maps and known constructions of cap sets over the finite field $\mathbb F_q$ for some prime power $q=q(r,k)$.

We will prove the following result (that has all parameters explicitly written) and then derive Theorem~\ref{thm:mainPoisson} from it.
 

\begin{theorem}\label{thm:poisson-finite}
	Let $q$ be a prime power and let $N\ge r\ge2$ and $3\le k\le r+1$ be integers. Define $h:=\lfloor (k-1)/2\rfloor$ and
	\begin{eqnarray*}
	p_{r,h}(q)&:=&\left(\prod_{i=2}^{r-1}(1-q^{-i})\right)
	\mathbb P\bigl(\operatorname{Bin}(r,1/q)\le h-1\bigr),\\
 s(q)&:=&\frac{(q-1)(q^2+1)}{q^4}.\end{eqnarray*} 
  Then there is an $H_k^r$-free $r$-graph $G$ on $N$ vertices with at least
	$$
	\binom Nr\,p_{r,h}(q)s(q)(1-q^{-3})^{N-r}
	$$
	edges. Consequently,
	\begin{equation}\label{eq:poisson-exact}
		\pi(H_k^r)\ge
		\frac{(N)_r}{N^r}\,p_{r,h}(q)s(q)(1-q^{-3})^{N-r}.
	\end{equation}
\end{theorem}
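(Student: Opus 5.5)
The plan is a random algebraic construction followed by an averaging (alteration) argument. Take $V=[N]$ and give each vertex $v$ an independent uniformly random label $x_v\in\mathbb F_q^4$; also give it a random colour $a(v)\in\mathbb F_q$ (for instance, a coordinate of $x_v$). Fix the set
\[
S:=\{\lambda y:\ \lambda\in\mathbb F_q^{*},\ \langle y\rangle\in\mathcal O\}\subseteq\mathbb F_q^4 ,
\]
the cone over an ovoid $\mathcal O$ of $\mathrm{PG}(3,q)$; this is the cap construction we will use. It has $(q-1)(q^2+1)$ elements, so a uniform vector lies in $S$ with probability exactly $s(q)$. Recall that no three points of an ovoid are collinear.

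An $r$-set $A$ is declared an edge when it passes three tests.
\begin{enumerate}
\item The checksum $\sigma(A):=\sum_{v\in A}x_v$ lies in $S$.
\item Every colour class $A\cap a^{-1}(z)$ has fewer than $h$ elements, apart from a prescribed one.
\item The labels $\{x_v\}_{v\in A}$ are in general position in the sense needed below, namely that the relevant partial sums are pairwise non-proportional.
\end{enumerate}
I expect the factor $\prod_{i=2}^{r-1}(1-q^{-i})$ to be the probability of the general-position test, computed by exposing the labels one at a time. I expect the binomial factor to be the probability of the multiplicity test for a single colour class: that class has distribution $\mathrm{Bin}(r,1/q)$. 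I expect $s(q)$ to be the probability that $\sigma(A)\in S$; this test is independent of the others because the last exposed label is uniform.

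The key deterministic step is to show that the resulting $r$-graph, after the deletion step described next, contains no $H_k^r$. Suppose $U$ is an $(r+1)$-set with $k$ edges $A_i=U\setminus\{v_i\}$, $i\in[k]$. Then $\sigma(A_i)=\sigma(U)-x_{v_i}$ all lie in $S$, so the $k$ points $\sigma(U)-x_{v_i}$ lie on the cone. Since $k\ge 2h+1$, pigeonhole over the colours of $v_1,\dots,v_k$ combined with the multiplicity test should force three of these removed vertices into a configuration whose three checksums span only a projective line. That would contradict the cap property of $\mathcal O$, once the general-position test guarantees that the three checksums are pairwise non-proportional and hence give three distinct projective points.

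I expect the factor $(1-q^{-3})^{N-r}$ to come from a deletion step for dangerous extensions. Fix an edge $A$. For each of the $N-r$ outside vertices $w$, there is a probability at most $q^{-3}$ that $x_w$ falls in a bad affine subspace determined by $A$, a subspace of codimension $3$ in $\mathbb F_q^4$. We keep $A$ only if no $w$ is bad. The labels of the outside vertices are independent of those in $A$, so this keeps $A$ with probability $(1-q^{-3})^{N-r}$.

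Linearity of expectation then gives an outcome with at least $\binom Nr p_{r,h}(q)s(q)(1-q^{-3})^{N-r}$ edges. The bound \eqref{eq:poisson-exact} follows by the standard fact that $\mathrm{ex}(N,H)/\binom Nr$ is non-increasing in $N$ (Lemma~\ref{end:lem:finite-density}), after converting $\binom Nr$ into $N^r/r!$, which produces the factor $(N)_r/N^r$.

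The main obstacle is the deterministic $H_k^r$-freeness step: choosing the multiplicity and general-position tests so that exactly the stated probabilities appear while the ovoid's cap property still rules out every copy of $H_k^r$. I would first settle the case $k=3$, $h=1$, where all three removed vertices must share a colour, and then extend to general $k$ using $k\ge 2h+1$.
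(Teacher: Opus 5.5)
There is a genuine gap, and it is the step you flag as the main obstacle. Your labels are the columns of a uniform linear map $L:\mathbb F_q^N\to\mathbb F_q^4$, and $\sigma(A)=L\mathbf 1_A$. For an $(r+1)$-set $U$, however, the vectors $\mathbf 1_{U\setminus\{v\}}$, $v\in U$, span a space of dimension at least $r$, since their differences are $\V{e}_{v'}-\V{e}_{v}$. So the cone points $\sigma(U)-x_{v_i}$ satisfy no forced linear relation, and the cap property has nothing to act on. Sharing a colour, even if the colour is a coordinate of $x_v$, only places three of these vectors in a common affine hyperplane, where they are still generically linearly independent.

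Concretely, let $r\ge3$. Take any $\V{y}$ and any $\V{s}_1,\V{s}_2,\V{s}_3\in S$ representing three distinct ovoid points. Set $x_{v_i}:=\V{y}-\V{s}_i$ and let the other $r-2$ labels of $U$ sum to $\V{s}_1+\V{s}_2+\V{s}_3-2\V{y}$. Then $\sigma(U\setminus\{v_i\})=\V{s}_i$ for $i\in[3]$. These three values pass both the cone test and your non-proportionality test, and no pigeonhole over colours can create a collinearity the labels do not have.

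The missing idea is to apply $L$ not to $\mathbf 1_A$ but to a vector that by construction lies in a $2$-dimensional space attached to $U$. The paper takes a uniform $(r-1)\times N$ matrix $M$ and lets $\V{c}_A$ generate $P_A=\ker M\cap\mathbb F_q^A$. If some $r$-subset of $U$ is admissible, then $\dim P_U=2$, so every $L\V{c}_A$ with $A\subseteq U$ lies in $L(P_U)$. If $L(P_U)$ is $2$-dimensional, these images meet at most two cap points. If it is $1$-dimensional, $U$ is bad and its edges are deleted.

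Multiplicity is then controlled intrinsically. If $P_{U\setminus\{u\}}=P_{U\setminus\{u'\}}$, then $\V{c}_{U\setminus\{u\}}$ vanishes at $u'$. So allowing at most $h-1$ zero coordinates of $\V{c}_A$ in $A$ gives at most $h$ edges per cap point, hence at most $2h\le k-1$ edges in $U$.

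This is also where the factors really come from:
\begin{itemize}
\item the product comes from $\operatorname{rank}M|_{\mathbb F_q^A}=r-1$, divided by $1-q^{-r}$;
\item the binomial factor counts the zero coordinates of a uniform kernel line, not the size of a vertex colour class;
\item $s(q)$ is the probability that the uniform vector $L\V{c}_A$ lies in $S_q$;
\item $(1-q^{-3})^{N-r}$ comes from the independent events $L\V{u}^x\in\operatorname{span}\{L\V{c}_A\}$ for $x\notin A$, where $\V{u}^x\in P_{A\cup\{x\}}$ has $x$-coordinate $1$. Your codimension-$3$ intuition here is right in spirit.
\end{itemize}

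Your final step is also wrong as stated. Lemma~\ref{end:lem:finite-density} says $\mathrm{ex}(N,H)/\binom Nr$ is non-increasing, which gives $\pi(H)\le \mathrm{ex}(N,H)/\binom Nr$, the opposite of the inequality you need. The factor $(N)_r/N^r$ comes from blowing up $G$. Since $k\ge3$, every pair of vertices of $H_k^r$ lies in a common edge. Hence a copy of $H_k^r$ in a blowup of $G$ meets each part at most once and projects to a copy in $G$. So all blowups are $H_k^r$-free, and their edge densities tend to $r!|G|/N^r=\frac{(N)_r}{N^r}\cdot\frac{|G|}{\binom Nr}$.
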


\subsection{Construction via caps and random linear maps}

Fix $q,N,r,k$ as in Theorem~\ref{thm:poisson-finite}. Recall that $h=\lfloor(k-1)/2\rfloor$. We will use the {projective geometry $\operatorname{PG}(3,q)$}, whose points are the 1-dimensional subspaces of $\mathbb F_q^4$ (called \emph{projective points}). A \emph{projective line} in $\operatorname{PG}(3,q)$ consists of the projective points in a 2-dimensional subspace of $\mathbb F_q^4$.
We write $[\V{v}]$ for the projective point represented by a non-zero vector $\V{v}\in\mathbb F_q^4$. 

A \emph{cap} is a set of points with no three collinear. We use the following classical result of Bose~\cite[Eq.~(5.60)]{Bose47}.
(For modern texts containing a proof, see e.g.~\cite[Theorem 4.36]{Ball15fgca} or~\cite[Proposition~2.6.6]{BrouwerVanmaldeghem22srg}.)

\begin{lemma}[Bose~\cite{Bose47}]\label{lem:cap}
    For every prime power $q$, there is a cap $\mathcal C_q\subseteq\operatorname{PG}(3,q)$ with $q^2+1$ points.
\end{lemma}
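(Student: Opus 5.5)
We would prove Lemma~\ref{lem:cap} with the classical elliptic quadric. First fix a monic quadratic polynomial $t^2+at+b\in\mathbb F_q[t]$ that is irreducible over $\mathbb F_q$; one exists for every prime power $q$, since there are $(q^2-q)/2>0$ monic irreducible quadratics. Then define the quadratic form
\[
Q(\V{x}):=x_0x_1+f(x_2,x_3),\qquad f(x_2,x_3):=x_2^2+a x_2x_3+b x_3^2,
\]
on $\mathbb F_q^4$. We take $\mathcal C_q:=\{[\V{x}]:Q(\V{x})=0\}$, which is well defined because $Q$ is homogeneous. Irreducibility of the polynomial means that $f(x_2,x_3)=0$ only when $x_2=x_3=0$: if $x_3\neq0$, dividing by $x_3^2$ would give a root of $t^2+at+b$.

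\emph{Counting.} We split by whether $x_0=0$.
\begin{itemize}
\item If $x_0=0$, then $Q(\V{x})=0$ forces $f(x_2,x_3)=0$, so $x_2=x_3=0$. This gives the single point $[\V{e}_1]=[0,1,0,0]$.
\item If $x_0\neq0$, we scale to $x_0=1$. Then $x_1=-f(x_2,x_3)$ is determined uniquely by the free choice of $(x_2,x_3)\in\mathbb F_q^2$, giving $q^2$ points.
\end{itemize}
Hence $|\mathcal C_q|=q^2+1$.

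\emph{No three collinear.} Let $W$ be a $2$-dimensional subspace, that is, a projective line. The restriction $Q|_W$ is a binary quadratic form. A nonzero binary quadratic form has at most $2$ projective zeros, since its zeros correspond to roots of a degree-$\le2$ dehomogenised polynomial, together with possibly the point at infinity. So if $W$ contained three points of $\mathcal C_q$, then $Q$ would vanish identically on $W$, and it suffices to show that no such totally singular $W$ exists.
\begin{itemize}
\item The hyperplane $\{x_0=0\}$ meets $W$ in a subspace of dimension at least $1$. By the counting step, the only point of $\mathcal C_q$ with $x_0=0$ is $[\V{e}_1]$, so $\V{e}_1\in W$.
\item Pick $\V{w}\in W\setminus\operatorname{span}(\V{e}_1)$. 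If $w_0=0$, then $[\V{w}]$ would be a second singular point with $x_0=0$, which is impossible, so $w_0\neq0$; rescale to $w_0=1$.
\item The associated bilinear form gives
\[
Q(\V{e}_1+\V{w})-Q(\V{e}_1)-Q(\V{w})=w_0=1 .
\]
Since $Q(\V{e}_1)=Q(\V{w})=0$, this yields $Q(\V{e}_1+\V{w})=1\neq0$, contradicting total singularity.
\end{itemize}
This argument is valid in every characteristic, including $q$ even, because it uses the polarisation $Q(\V{x}+\V{y})-Q(\V{x})-Q(\V{y})$ rather than division by $2$.

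The main point needing care is precisely this uniformity in characteristic $2$. Choosing $f$ as the norm form of $\mathbb F_{q^2}/\mathbb F_q$ (via an irreducible quadratic) rather than a diagonal form $x_2^2-\nu x_3^2$ avoids any case split between odd and even $q$. The remaining steps are routine.
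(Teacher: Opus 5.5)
Your proof is correct and is essentially the paper's argument: the printed text only cites Bose, but the proof recorded in the source uses the same elliptic quadric $Q(a,b,z)=ab-\operatorname{Nm}(z)$ on $\mathbb F_q\times\mathbb F_q\times\mathbb F_{q^2}$, which is your form $x_0x_1+f(x_2,x_3)$ after a change of coordinates, with $f$ the norm form written in a basis of $\mathbb F_{q^2}$. The only difference is how the cap property is checked: the paper computes directly that $Q(s\V{u}+t\V{v})=st\operatorname{Nm}(z-w)$ on the line through two points of the set, while you show that a line with three zeros would be totally singular and rule this out using the hyperplane $x_0=0$ and polarisation; both arguments work in every characteristic.
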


\hide{
\begin{proof}
    Let $K:=\mathbb F_{q^2}$, identify $\mathbb F_q^4$ with $\mathbb F_q\times\mathbb F_q\times K$, and write $\operatorname{Nm}(z):=z^{q+1}$ for the norm from $K$ to $\mathbb F_q$. Put
    \[
    \mathcal C_q:=\{[(1,\operatorname{Nm}(z),z)]:z\in K\}\cup\{[(0,1,0)]\}.
    \]
    These are $q^2+1$ distinct points, all represented by zeros of the quadratic form $Q(a,b,z):=ab-\operatorname{Nm}(z)$. If $\V{u}=(1,\operatorname{Nm}(z),z)$ and $\V{v}=(1,\operatorname{Nm}(w),w)$, then expansion of the norm gives
    \[
    Q(s\V{u}+t\V{v})=st\operatorname{Nm}(z-w)
    \quad(s,t\in\mathbb F_q).
    \]
    For $z\ne w$, the norm on the right is non-zero, so the line through $[\V{u}]$ and $[\V{v}]$ contains no other zero of $Q$. If instead $\V{u}=(0,1,0)$ and $\V{v}=(1,\operatorname{Nm}(z),z)$, the corresponding identity is $Q(s\V{u}+t\V{v})=st$, with the same conclusion. Thus no three points of $\mathcal C_q$ are collinear.
\end{proof}
}

Fix such a cap  $\mathcal C_q\subseteq\operatorname{PG}(3,q)$ and let
\[
S_q:=\{\V{v}\in\mathbb F_q^4\setminus\{\V{0}\}:[\V{v}]\in\mathcal C_q\}
\]
be its \emph{cone}. Each projective point has $q-1$ non-zero representatives, so $|S_q|=(q-1)(q^2+1)$ and $|S_q|/q^4=s(q)$.

Choose independent uniform random linear maps
\[
M:\mathbb F_q^N\longrightarrow\mathbb F_q^{r-1}\quad\text{and}\quad
L:\mathbb F_q^N\longrightarrow\mathbb F_q^4;
\]
equivalently, the entries of their matrices are all independent and uniform in $\mathbb F_q$. We use the following standard fact.

\begin{lemma}\label{lem:uniform}
    Let $V,W$ be finite-dimensional vector spaces over $\mathbb F_q$, and choose a uniform linear map $T:V\to W$. If $\V{v}_1,\ldots,\V{v}_s$ are linearly independent in $V$, then $T\V{v}_1,\ldots,T\V{v}_s$ are independent uniform elements of $W$.
\end{lemma}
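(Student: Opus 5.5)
The statement to prove is Lemma~\ref{lem:uniform}. The plan is to reduce to the case of a basis and then count. Since $\V{v}_1,\ldots,\V{v}_s$ are linearly independent, I extend them to a basis $\V{v}_1,\ldots,\V{v}_n$ of $V$. A linear map $T:V\to W$ is uniquely determined by the tuple $(T\V{v}_1,\ldots,T\V{v}_n)\in W^n$, and every tuple in $W^n$ arises from exactly one linear map. So the map $T\mapsto (T\V{v}_1,\ldots,T\V{v}_n)$ is a bijection between $\operatorname{Hom}(V,W)$ and $W^n$.

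Under this bijection, the uniform distribution on $\operatorname{Hom}(V,W)$ is carried to the uniform distribution on $W^n$. The uniform distribution on a product set is the product of the uniform distributions on the factors. Hence $T\V{v}_1,\ldots,T\V{v}_n$ are independent and each is uniform on $W$. In particular, the subfamily $T\V{v}_1,\ldots,T\V{v}_s$ is also independent and uniform, obtained by marginalising out the remaining coordinates.

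One point needs checking, since the paper describes $M$ and $L$ by independent uniform matrix entries. With respect to any fixed bases, uniform linear maps correspond bijectively to uniform matrices over $\mathbb F_q$. So the two descriptions define the same distribution, and the lemma applies to $M$ and $L$ as used in the construction.

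There is no real obstacle in this argument. The only care needed is to observe that the correspondence is a genuine bijection, which is where linear independence (extendability to a basis) is used. This is why the lemma requires the $\V{v}_i$ to be linearly independent rather than merely distinct.
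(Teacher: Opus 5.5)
Your proposal is correct and follows the paper's proof exactly: extend the vectors to a basis and use the bijection between linear maps and tuples of basis images in $W^n$, so the images are independent and uniform. Your added remark that a uniform matrix is the same as a uniform linear map (with respect to fixed bases) is a harmless and accurate check of how the lemma is applied to $M$ and $L$.
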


\begin{proof}
    Extend the given vectors to a basis of $V$. A linear map is uniquely determined by its images on this basis, and all choices of these images occur with equal probability.
\end{proof}

For $A\subseteq[N]$, let $\mathbb F_q^A\subseteq\mathbb F_q^N$ be the coordinate subspace supported on $A$, and let the \emph{kernel} of $A$ be
\[
P_A:=\ker M\cap\mathbb F_q^A,
\]
 that is, the subspace $P_A\subseteq \mathbb F_q^N$ consists of the vectors that are sent to $\V{0}$ by $M$  and whose coordinates in $[N]\setminus A$ are all $0$.

For an $r$-set $A$, we say that $A$ is \emph{admissible} if $\dim P_A=1$ and a non-zero generator $\V{c}_A$ of $P_A$ has at most $h-1$ zero coordinates indexed by $A$. This condition does not depend on the choice of the generator since we require that $P_A$ is 1-dimensional. Choose $\V{c}_A$ as a function of $M$ alone. When $h=1$, an admissible set is a \emph{circuit}: its kernel $P_A$ is generated by a vector supported on all of $A$.

If $A$ is admissible and $A\subseteq B\subseteq[N]$, then the $(r-1)\times r$ submatrix $M|_{\mathbb F_q^A}$ of $M$, where we take the columns indexed by $A$, has the maximum possible rank $r-1$, so $M|_{\mathbb F_q^B}$ also has rank $r-1$. Thus
\begin{equation}\label{eq:poisson-kernel-dimension}
	\dim P_B=|B|-r+1.
\end{equation}
Define the initial $r$-graph $G_0$ by
\begin{equation*}
	A\in G_0\quad\Longleftrightarrow\quad
	A\text{ is admissible and }L\V{c}_A\in S_q.
\end{equation*}
Call an $(r+1)$-set $U$ \emph{bad} if $L(P_U)$ is a 1-dimensional subspace representing a point of~$\mathcal C_q$. Delete from $G_0$ every edge contained in any bad set, and denote the resulting $r$-graph by~$G$.

Before proving that the obtained graph $G$ is always $H_k^r$-free, let us informally discuss this construction. Its most interesting ingredient is probably the map that sends some $r$-sets $A$ (in fact, a positive fraction of them in our final choice of parameters) to vectors $\V{c}_A$ such that, for every $(r+1)$-set $U$, all resulting vectors are confined  to a subspace (namely, $P_U$) whose dimension is $2$ unless some degeneracy occurs in $M|_{\mathbb F_q^U}$ (which is rare so we can just remove all involved $r$-sets).\footnote{Our literature search returned that the idea of placing circuit vectors supported on a set $U$  in a subspace of dimension $|U|-\operatorname{rank}(M|_{\mathbb F_q^U})$ was used by e.g.\ Freij-Hollanti, Jurrius and Kuznetsova~\cite[Equations~(1)--(2)]{FreijHollantiJurriusKuznetsova23}.}
Thus if we take the image of these vectors under the random map $L$ then those coming from any given $(r+1)$-set $U$ can hit at most two different elements of the cap $\mathcal C_q$ and we just need to control the multiplicity of each preimage. 

Let us give a formal proof of the $H_k^r$-freeness of $G$, which is actually quite short.

\begin{lemma}\label{lem:poisson-free}
	For every outcome of the two random maps $M$ and $L$, the resulting $r$-graph $G$ is $H_k^r$-free.
\end{lemma}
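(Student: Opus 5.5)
Suppose for contradiction that $G$ contains a copy of $H_k^r$, i.e.\ an $(r+1)$-set $U$ and distinct vertices $v_1,\dots,v_k\in U$ with $A_i:=U\setminus\{v_i\}\in G$ for all $i\in[k]$. We derive a contradiction from the linear algebra of $P_U$.

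\emph{Step 1: the kernel of $U$.} Since each $A_i$ is admissible and $A_i\subseteq U$, \eqref{eq:poisson-kernel-dimension} gives $\dim P_U=2$. Each $\V{c}_{A_i}$ lies in $P_{A_i}\subseteq P_U$. Moreover $\V{c}_{A_i}$ has zero $v_i$-coordinate, since it is supported in $A_i$.

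\emph{Step 2: the image $L(P_U)$.} We have $L(\V{c}_{A_i})\in S_q$, so it is non-zero. Hence $L(P_U)$ is $1$- or $2$-dimensional. If it were $1$-dimensional, it would be spanned by $L\V{c}_{A_1}\in S_q$ and so represent a point of $\mathcal C_q$; then $U$ would be bad and all $A_i$ would have been deleted from $G_0$, a contradiction. So $L$ restricted to $P_U$ is injective and $L(P_U)$ is a $2$-dimensional subspace, i.e.\ a projective line. Since $\mathcal C_q$ is a cap, this line contains at most two points of $\mathcal C_q$. Each projective point $[L\V{c}_{A_i}]$ lies on it and in $\mathcal C_q$, so the $k$ points $[L\V{c}_{A_i}]$ take at most two distinct values. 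By injectivity of $L$ on $P_U$, the $k$ one-dimensional subspaces $P_{A_i}=\langle\V{c}_{A_i}\rangle$ take at most two distinct values. By pigeonhole, some subspace $P_{A_i}$ coincides for at least $\lceil k/2\rceil\ge h+1$ indices $i$; recall $h=\lfloor (k-1)/2\rfloor$, so $\lceil k/2\rceil = k-h\ge h+1$.

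\emph{Step 3: counting zeros.} Fix such a class $I$ with $|I|\ge h+1$ and a common generator $\V{c}$. For each $i\in I$, $\V{c}$ vanishes at $v_i$. So $\V{c}$ has at least $h+1$ zero coordinates in $U$. Take $i_0\in I$. Then $\V{c}=\V{c}_{A_{i_0}}$ up to scaling, and it vanishes at the $\ge h$ vertices $v_i$ with $i\in I\setminus\{i_0\}$, all of which lie in $A_{i_0}$. This gives at least $h$ zero coordinates indexed by $A_{i_0}$, contradicting admissibility, which allows at most $h-1$.

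The main point requiring care is Step 2. There we must confirm that the bad-set deletion exactly rules out the degenerate case $\dim L(P_U)=1$. We must also check that non-degeneracy transfers equality of projective points back to equality of the kernels $P_{A_i}$. The arithmetic $k-h\ge h+1$ in the pigeonhole step also needs checking for both parities of $k$.
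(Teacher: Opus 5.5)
Your proof is correct and follows the paper's route: $\dim P_U=2$, the bad-set deletion rules out $\dim L(P_U)=1$, the cap property leaves at most two kernel lines, and the zero-coordinate count shows each line accounts for at most $h$ edges (the paper phrases this as a direct count of at most $2h\le k-1$ edges per $(r+1)$-set rather than by contradiction). One small slip: $\lceil k/2\rceil=k-h$ fails for even $k$ (e.g.\ $k=4$ gives $2\neq 3$), but what your pigeonhole step needs is only $\lceil k/2\rceil\ge h+1$, which holds for both parities (indeed $\lceil k/2\rceil=h+1$) since $k>2h$.
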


\begin{proof}
	Fix an $(r+1)$-set $U$ containing at least one edge of $G_0$. By~\eqref{eq:poisson-kernel-dimension}, we have $\dim P_U=2$.
	
	First, we show that any fixed 1-dimensional subspace of $P_U$ can be the kernel of at most $h$ admissible $r$-subsets of $U$. Suppose that this subspace is $P_A$, where $A=U\setminus\{u\}$ is admissible. Its generator $\V{c}_A$ has zero coordinate at $u$ and at most $h-1$ zero coordinates indexed by $A$, so it has at most $h$ zero coordinates indexed by $U$. If some $r$-set $A'=U\setminus\{u'\}$ has $P_{A'}=P_A$, then the $u'$-coordinate of $\V{c}_A$ is zero. Hence there are at most $h$ possibilities for $A'$, as claimed.
	
If $\dim L(P_U)=2$, then $L|_{P_U}$ is injective. 
So distinct subspaces $P_A$, where $A$ ranges over all initial edges in $U$,  are mapped to distinct projective points of $\mathcal C_q$. 
Since $\mathcal C_q$ is a cap, there are at most two such subspaces $P_A$. Each accounts for at most $h$ initial edges, so $U$ contains at most $2h$ edges of $G_0$.
	
	The case $\dim L(P_U)=0$ is impossible, since an initial edge $A\subseteq U$ forces a non-zero vector $L\V{c}_A\in L(P_U)\cap S_q$. If $\dim L(P_U)=1$, then the presence of an initial edge implies that the projective point corresponding to $L(P_U)$ belongs to $\mathcal C_q$. Thus $U$ is bad and all initial edges inside $U$ are deleted.

	In every case, $U$ contains at most $2h\le k-1$ edges of $G$, as required.
\end{proof}

\subsection{Determining the expected size of \texorpdfstring{$G$}{G}}

Next, for a fixed $r$-set $A\subseteq [N]$, we write an explicit formula for the probability that $A$ is an edge of~$G$.

We first compute the probability that $A$ is admissible. The $(r-1)\times r$ submatrix $M|_{\mathbb F_q^A}$ of $M$ has rank $r-1$ with probability
\[
\prod_{j=0}^{r-2}\left(1-\frac{q^{j}}{q^{r}}\right)=\prod_{i=2}^{r}(1-q^{-i}),
\]
as follows by exposing its rows successively and requiring linear independence. Conditional on this rank, its kernel is a uniform 1-dimensional subspace of $\mathbb F_q^A$. Indeed, right multiplication by any invertible $r\times r$ matrix preserves the distribution of the submatrix and acts transitively on its possible kernel lines.

A uniform kernel line has the same distribution of zero coordinates as a uniform non-zero vector in $\mathbb F_q^A$, since every line has $q-1$ non-zero vectors. The number of zero coordinates of a uniform vector in $\mathbb F_q^A$ (without the non-zero conditioning) is  distributed as $Z\sim\operatorname{Bin}(r,1/q)$. Since $h-1<r$, the event $Z\le h-1$ excludes the zero vector. Hence
\begin{align*}
	\mathbb P(A\text{ is admissible})
	&=\left(\prod_{i=2}^{r}(1-q^{-i})\right)
	\frac{\mathbb P(Z\le h-1)}{1-q^{-r}}
	=p_{r,h}(q).
\end{align*}

Next, condition on the entire matrix $M$ such that $A$ is admissible. By Lemma~\ref{lem:uniform}, the image $L\V{c}_A$ is uniform in $\mathbb F_q^4$, so it belongs to $S_q$ with probability $|S_q|/q^4$. For each $x\in[N]\setminus A$, choose a vector $\V{u}^x\in P_{A\cup\{x\}}$ with $x$-coordinate equal to 1, making this choice as a function of $M$ alone. Such a vector exists: the coordinate projection from  $P_{A\cup\{x\}}$ (which has dimension 2 by~\eqref{eq:poisson-kernel-dimension}) to $\mathbb F_q^{\{x\}}$ has 1-dimensional kernel $P_A$ so its image is 1-dimensional and contains the whole of $\mathbb F_q^{\{x\}}$.

The vectors
\[
\{\V{c}_A\}\cup\{\V{u}^x:x\in[N]\setminus A\}
\]
are linearly independent: in any linear relation between them, the coordinate at $x\in [N]\setminus A$ is the coefficient of $\V{u}^x$, and once all these coefficients vanish, the coefficient of $\V{c}_A$ vanishes too. Their images under $L$ are therefore independent uniform vectors by Lemma~\ref{lem:uniform}.

Condition further on $L\V{c}_A=\V{y}$ for a fixed $\V{y}\in S_q$. The vectors $L\V{u}^x$ remain independent and uniform in $\mathbb F_q^4$. Since
\[
P_{A\cup\{x\}}=\operatorname{span}\{\V{c}_A,\V{u}^x\},
\]
the set $A\cup\{x\}$ is bad exactly when $L\V{u}^x\in\operatorname{span}\{\V{y}\}$. This has probability $q/q^4=q^{-3}$, independently for each $x\in [N]\setminus A$. These are all the $(r+1)$-sets containing $A$. Consequently,
\begin{equation*}
	\mathbb P(A\in G)=\mathbb P(A\text{ is admissible})\, \frac{|S_q|}{q^4}\, (1-q^{-3})^{N-r}=p_{r,h}(q)s(q)(1-q^{-3})^{N-r}.
\end{equation*}
Summing over all $r$-sets $A$ yields
\begin{equation}\label{eq:poisson-expectation}
	\mathbb E\,|G|=\binom Nr p_{r,h}(q)s(q)(1-q^{-3})^{N-r}.
\end{equation}

\subsection{Deriving  Theorems~\ref{thm:poisson-finite} and~\ref{thm:mainPoisson}}

\begin{proof}[Proof of Theorem~\ref{thm:poisson-finite}]
	By Lemma~\ref{lem:poisson-free}, every outcome of $G$ is $H_k^r$-free. Choose an outcome whose edge count is at least the expectation in~\eqref{eq:poisson-expectation}. This proves the assertion about the number of edges.
	
	Every pair of vertices of $H_k^r$ lies in an edge (since $k\ge 3$). Consequently, any copy of $H_k^r$ in a blowup of $G$ would use every part at most once and would project to a copy of $H_k^r$ in $G$. Thus all blowups of $G$ are also $H_k^r$-free. Replacing each vertex of $G$ by $m$ clones and letting $m\to\infty$ gives
	\[
	\pi(H_k^r)\ge\lim_{m\to\infty}\frac{|G|m^r}{\binom{Nm}r}
	=\frac{r!|G|}{N^r}
	\ge\frac{(N)_r}{N^r}\,p_{r,h}(q)s(q)(1-q^{-3})^{N-r},
	\]
	which proves~\eqref{eq:poisson-exact}.
\end{proof}

Next, we derive Theorem~\ref{thm:mainPoisson} from Theorem~\ref{thm:poisson-finite}. 
We will use the following elementary inequality:
\begin{equation}\label{eq:product-bound}
	\prod_{i=1}^m(1-a_i)\ge1-\sum_{i=1}^m a_i,
	\quad\text{for any }a_1,\ldots,a_m\in[0,1],
\end{equation}
which follows by induction on $m$. 

\hide{
\subsection{The Poisson constants}

Recall also the constants $c_h$ defined in Theorem~\ref{thm:mainPoisson}. We first record bounds on them.

\begin{lemma}\label{lem:poisson-constants}
	For every integer $h\ge1$, we have
	\begin{equation}\label{eq:poisson-constant-bounds}
		h-\sqrt{2h\log h}-1\le c_h\le h.
	\end{equation}
	Also, $c_1=\me^{-1}$ and $c_h/h\to1$ as $h\to\infty$.
\end{lemma}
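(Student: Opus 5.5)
Recall that $c_h=\sup_{\lambda>0}\lambda\,\mathbb P(Y_\lambda\le h-1)$, where $Y_\lambda$ is a Poisson random variable with mean $\lambda$, since $\me^{-\lambda}\sum_{j=0}^{h-1}\lambda^j/j!=\mathbb P(Y_\lambda\le h-1)$. All four claims will be read off from this representation.

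\emph{Upper bound.} Shift the index in the defining sum:
\[
\lambda\me^{-\lambda}\sum_{j=0}^{h-1}\frac{\lambda^j}{j!}=\sum_{i=1}^{h} i\,\me^{-\lambda}\frac{\lambda^i}{i!}=\mathbb E\bigl[Y_\lambda\mathbf 1\{Y_\lambda\le h\}\bigr].
\]
The right-hand side is at most $h\,\mathbb P(Y_\lambda\le h)\le h$. Taking the supremum over $\lambda$ gives $c_h\le h$.

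\emph{The case $h=1$.} Here the function is $\lambda\me^{-\lambda}$. Its derivative $(1-\lambda)\me^{-\lambda}$ vanishes only at $\lambda=1$, which is the maximum. Hence $c_1=\me^{-1}$.

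\emph{Lower bound.} If $h-\sqrt{2h\log h}-1\le 0$ the bound is trivial, since $c_h>0$; this covers $h=1$. Otherwise set $t:=\sqrt{2h\log h}$ and $\lambda:=h-t>0$. Then
\[
c_h\ge\lambda-\lambda\,\mathbb P(Y_\lambda\ge h)\ge h-t-\lambda\,\mathbb P(Y_\lambda\ge\lambda+t),
\]
so it suffices to show $\lambda\,\mathbb P(Y_\lambda\ge\lambda+t)\le1$. For this I would use the standard Bernstein/Chernoff tail bound for the Poisson distribution,
\[
\mathbb P(Y_\lambda\ge\lambda+t)\le\exp\Bigl(-\frac{t^2}{2(\lambda+t/3)}\Bigr).
\]
This follows from optimising $\mathbb E\,\me^{\theta Y_\lambda}=\me^{\lambda(\me^\theta-1)}$ together with $(1+x)\log(1+x)-x\ge x^2/(2(1+x/3))$. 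Since $\lambda+t/3\le\lambda+t=h$, the exponent is at most $-t^2/(2h)=-\log h$. So the probability is at most $1/h$, and because $\lambda\le h$ we get $\lambda\,\mathbb P(Y_\lambda\ge h)\le1$. This gives $c_h\ge h-\sqrt{2h\log h}-1$.

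\emph{Limit.} Dividing the two-sided bound by $h$ gives $1-\sqrt{2\log h/h}-1/h\le c_h/h\le1$, so $c_h/h\to1$.

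The only non-routine ingredient is the Poisson tail inequality in the lower bound. I would quote it as standard, or derive it via the moment generating function in one line if the paper prefers self-containment.
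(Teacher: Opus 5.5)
Your proposal is correct and follows essentially the same route as the paper: the identity $\lambda\,\mathbb P(Y_\lambda\le h-1)=\mathbb E\bigl[Y_\lambda\mathbf 1\{Y_\lambda\le h\}\bigr]\le h$ gives the upper bound, and for the lower bound the choice $\lambda=h-\sqrt{2h\log h}$ together with a Chernoff bound gives $\mathbb P(Y_\lambda\ge h)\le 1/h$. The only difference is cosmetic. The paper writes the optimised Chernoff exponent as $h\bigl(-\log(1-x)-x\bigr)$ with $x=t/h$ and applies the one-line estimate $-\log(1-x)-x\ge x^2/2$, which yields $t^2/(2h)$ directly; you instead pass through the Bernstein form, which is stronger than needed here.
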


\begin{proof}
	If $Z\sim\operatorname{Pois}(\lambda)$, then direct summation gives
	\[
	\lambda\mathbb P(Z\le h-1)
	=\mathbb E\bigl[Z\,\mathbf 1_{\{Z\le h\}}\bigr]\le h.
	\]
	Taking the supremum over $\lambda>0$ proves the upper bound.
	
	For the lower bound, there is nothing to prove when $h-\sqrt{2h\log h}-1\le0$. Otherwise, put $t:=\sqrt{2h\log h}$ and $\lambda:=h-t>0$; in this case $h>1$. The defining series of the Poisson distribution gives
	\[
	\mathbb E\,\me^{uZ}=\exp\bigl(\lambda(\me^u-1)\bigr).
	\]
	Markov's inequality with $u:=\log(h/\lambda)>0$ yields
	\begin{align*}
		\mathbb P(Z\ge h)
		&\le\exp\bigl(-h\log(h/\lambda)+h-\lambda\bigr)
		\le\exp\left(-\frac{(h-\lambda)^2}{2h}\right)=\frac1h.
	\end{align*}
	For the second inequality, write $x:=(h-\lambda)/h\in(0,1)$ and use
	\[
	-\log(1-x)-x=\int_0^x\frac{u}{1-u}\,du\ge\frac{x^2}{2}.
	\]
	It follows that
	\[
	c_h\ge\lambda\mathbb P(Z\le h-1)
	\ge(h-t)(1-1/h)\ge h-t-1,
	\]
	proving~\eqref{eq:poisson-constant-bounds}. Dividing these bounds by $h$ gives $c_h/h\to1$. Finally, the function $\lambda\me^{-\lambda}$ has derivative $(1-\lambda)\me^{-\lambda}$ and attains its maximum at $\lambda=1$, so $c_1=\me^{-1}$.
\end{proof}
}

\begin{proof}[Proof of Theorem~\ref{thm:mainPoisson}]
	Fix $k\ge3$, put $h:=\lfloor(k-1)/2\rfloor$, and fix $\lambda>0$. Let $r\to\infty$.  Let $q$ be the least prime with $q\ge r/\lambda$. The Prime Number Theorem implies that $q=(1+o(1)) r/\lambda$. Put
	\[
	N:=\lceil r^{5/2}\rceil.
	\]
	For the binomial random variable $Z\sim\operatorname{Bin}(r,1/q)$ and each fixed non-negative integer $j$, we have
	\[
	\mathbb P(Z=j)=\binom rj q^{-j}(1-q^{-1})^{r-j}
	\longrightarrow\me^{-\lambda}\frac{\lambda^j}{j!}.
	\]
	Also, by~\eqref{eq:product-bound},
	\[
	1\ge\prod_{i=2}^{r-1}(1-q^{-i})
	\ge1-\sum_{i=2}^{\infty}q^{-i}
	=1-\frac1{q(q-1)}\longrightarrow1,
	\]
	while
	\[
	rs(q)=\frac rq(1-q^{-1})(1+q^{-2})\longrightarrow\lambda.
	\]
	Two further applications of~\eqref{eq:product-bound} give
	\begin{align*}
		1\ge\frac{(N)_r}{N^r}
		&\ge1-\frac{r(r-1)}{2N}\longrightarrow1,\\
		1\ge(1-q^{-3})^{N-r}
		&\ge1-(N-r)q^{-3}\longrightarrow1.
	\end{align*}
	Substituting these limits into~\eqref{eq:poisson-exact} yields
	\[
	\liminf_{r\to\infty}r\pi(H_k^r)
	\ge\lambda\me^{-\lambda}\sum_{j=0}^{h-1}\frac{\lambda^j}{j!}.
	\]
	Since $\lambda>0$ was arbitrary, taking the supremum proves~\eqref{eq:poisson-fixed-bound}.
\end{proof}

\section{Lower bounds from the circular construction}\label{sec:3}

The main aim of this section is to prove Theorem~\ref{thm:main}, where we estimate the lower bound on $\pi(H_k^r)$ coming from the circular construction of Sidorenko~\cite{Sidorenko24}. 

We first recall this construction.
Let 
\[
\mathcal C:=\{z\in\mathbb C: |z|=1\}
\]
be the unit circle in the complex plane. Let $G_{r,k-1}$ be the (infinite) $r$-graph with vertex set $\mathcal C$, where an $r$-subset $A\subseteq \mathcal C$ is an edge if and only if 
\[\arg\Big(\prod_{z\in A}z \Big)\in\left[0, \Delta_{k-1}(A)\right],
\]
where $\Delta_{\ell}(A)$ denotes the shortest length of a closed arc in $\mathcal C$ that contains at least $\ell$ elements of $A$. Thus an $r$-set $A$ is an edge of $G_{r,k-1}$ if the sum of the angles of its elements modulo $2\pi$ is at most $\Delta_{k-1}(A)$.
Sidorenko~\cite[Theorem 3.1]{Sidorenko24} proved that this $r$-graph is $H_k^r$-free. 

Let $z_1,\dots,z_r$ be $r$ independent random points uniformly distributed on the circle~$\mathcal C$ and let $A:=\{z_1,\dots,z_r\}$; these points are almost surely distinct. When we apply a uniformly random rotation $\phi$ of the circle $\mathcal C$ to any fixed outcome of distinct points $z_1,\dots,z_r$, the rotated $r$-set $\phi(A)$ is an edge of $G_{r,k-1}$ with probability 
\[
\xi_{r,k-1}=\xi_{r,k-1}(z_1,\dots,z_r):=\frac{\Delta_{k-1}(A)}{2\pi}.
\]
This holds since $\arg(\prod_{j=1}^r \phi(z_j))$ is uniformly distributed in $[0,2\pi)$ while $\Delta_{k-1}(\phi(A))=\Delta_{k-1}(A)$ for any rotation~$\phi$. 

By considering the subgraph of $G_{r,k-1}$ induced by $n\to\infty$ independent uniformly distributed points on the circle and using the Fubini--Tonelli theorem, we obtain Inequality~\eqref{eq:GeneralLower} from the Introduction (which states that $\pi(H_k^r)$ is at least the expectation of the random variable $\xi_{r,k-1}$).
Thus our task is to estimate $\mathbb E(\xi_{r,k-1})$ for independent uniform $z_1,\dots,z_r\in\mathcal C$.

Let $z_1'=z_1$, and relabel the remaining points as $z_2',\dots,z_r'$ in counterclockwise order starting from $z_1$. When working with the variables $z_i'$ and various cyclic sums, we identify the index set $[r]=\{1,\dots,r\}$ with $\Z{r}$ and do index addition modulo $r$. 
Let $L_i$ be the length of the counterclockwise arc from $z_i'$ to $z_{i+1}'$ for $i\in [r]$. Then
\[\Delta_{k-1}(A)=\min_{ i\in [r]} \sum_{j=0}^{k-3}L_{i+j}.\]
Clearly, $L_1+\cdots +L_r=2\pi$.

The distribution of $(L_1,\dots,L_r)$ can be described using the following standard \emph{normalised exponential representation} (see Pyke~\cite[Section~4.1]{Pyke65} and compare with the related order-statistic representations of R\'enyi~\cite{Renyi53}).
Let $X_1,\ldots,X_r$ be independent exponential random variables with mean $1$, that is, each has density $\me^{-x}$ for $x\ge0$; extend their indices cyclically modulo $r$.  
Let 
\[
S:=\sum_{j=1}^rX_j.
\]
For $1\le\ell\le r-1$ and $i\in [r]$, we define
\begin{equation}\label{eq:WM}
	D_{i,\ell}:=\sum_{j=0}^{\ell-1}(1-X_{i+j})
	\quad \text{and} \quad
	M_{\ell}:=\max_{i\in [r]}D_{i,\ell}.
\end{equation}
The following result is standard; the spacing density claim used in its proof appears, for example, in Haj{\'o}s and R{\'e}nyi~\cite[Eq.~(6)]{HajosRenyi54}. For completeness, we give a proof.

\begin{lemma}\label{lem:renyi1953}
The vector $\big(\frac{L_1}{2\pi },\ldots, \frac{L_r}{2\pi }\big)$ has the same distribution as
$\big(\frac{X_1}{S},\ldots,\frac{X_r}{S}\big),$
and the latter normalised vector is independent of $S$.  Consequently, for $1\le \ell\le r-1$, we have
\begin{equation}\label{eq:EXi}
        \mathbb{E}(\xi_{r,\ell+1})=\frac{\ell-\mathbb{E}M_{\ell}}{r}.
\end{equation}
\end{lemma}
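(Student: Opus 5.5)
We need to prove two things: first that the spacings of $r$ uniform points have the normalised exponential distribution, and then identity \eqref{eq:EXi}.

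\textbf{Step 1: the density of the spacings.} Condition on $z_1$; by rotation invariance we may take $z_1=1$. The other $r-1$ points are i.i.d.\ uniform on the circle, so their angles are i.i.d.\ uniform on $[0,2\pi)$. After sorting them counterclockwise, the ordered angles $0<\theta_2<\dots<\theta_r<2\pi$ have the constant density $(r-1)!/(2\pi)^{r-1}$ on this simplex. The spacings are obtained by the linear change of variables $L_i=\theta_{i+1}-\theta_i$ for $i\le r-1$, together with $L_r=2\pi-\theta_r$. This map has Jacobian $1$. Hence $(L_1,\dots,L_{r-1})$ is uniform on the simplex $\{L_i>0,\ \sum_{i<r}L_i<2\pi\}$, with $L_r$ determined by the constraint $\sum L_i=2\pi$. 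After dividing by $2\pi$, the normalised spacings are therefore uniform on the standard $(r-1)$-simplex $\Delta$.

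\textbf{Step 2: the exponential representation.} Consider the map $(X_1,\dots,X_r)\mapsto (S, X_1/S,\dots,X_{r-1}/S)$. The joint density of the $X_j$ is $\me^{-\sum x_j}=\me^{-s}$, and the Jacobian of the inverse map $x_i=sy_i$ (for $i<r$), $x_r=s(1-\sum_{i<r} y_i)$ is $s^{r-1}$. The joint density of $(S,\V y)$ is therefore $s^{r-1}\me^{-s}$ on $(0,\infty)\times\Delta$. This density factorises, so $S$ is Gamma$(r)$-distributed, $\V y$ is uniform on $\Delta$, and the two are independent. Comparing with Step 1 proves the first claim.

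\textbf{Step 3: the expectation formula.} Recall that $\xi_{r,\ell+1}=\Delta_{\ell+1}(A)/(2\pi)=\min_i\sum_{j=0}^{\ell-1}L_{i+j}/(2\pi)$. By Steps 1--2 this has the same distribution as
\[
W:=\min_i \frac{1}{S}\sum_{j=0}^{\ell-1}X_{i+j},
\]
and $W$ is independent of $S$. Since
\[
\sum_{j=0}^{\ell-1}X_{i+j}=\ell-D_{i,\ell},
\]
we get $SW=\ell-M_\ell$. Taking expectations and using independence gives $\mathbb E(S)\,\mathbb E(W)=\ell-\mathbb E M_\ell$. With $\mathbb E S=r$, this yields \eqref{eq:EXi}. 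Integrability is not an issue: $0\le W\le 1$ and $|M_\ell|\le \ell+S$, so all expectations are finite.

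The only delicate point is the Jacobian bookkeeping in Step 2 (and the rotation reduction in Step 1). Once the independence of $S$ from the normalised vector is established, Step 3 is immediate.
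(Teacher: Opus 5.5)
Your proposal is correct and follows the paper's proof essentially step for step. It uses the order-statistics density with a unit-Jacobian change to spacings, then the change of variables $(S,X_1/S,\dots,X_{r-1}/S)$ with Jacobian $s^{r-1}$ giving a factorised density, and finally the identity $\min_i\sum_{j=0}^{\ell-1}X_{i+j}=\ell-M_\ell$ combined with independence and $\mathbb{E}S=r$; your integrability remark is a small addition that the paper leaves implicit.
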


\begin{proof}
Conditional on $z_1$, the counterclockwise angles of the other $r-1$ points relative to $z_1$, divided by $2\pi$, are independent uniform variables on $[0,1)$. Their order statistics have joint density $(r-1)!$ on $0<u_1<\cdots<u_{r-1}<1$. The change of variables from these order statistics to the first $r-1$ consecutive spacings has Jacobian determinant 1. Thus $(L_1/(2\pi),\ldots,L_{r-1}/(2\pi))$ has constant density $(r-1)!$ on $\mathbb S_{r-1}\subseteq \mathbb R^{r-1}$, the simplex of positive coordinates summing to less than 1.

For $j\in[r]$, write $V_j:=X_j/S$. Thus $V_1+\dots+V_r=1$. At $(v_1,\ldots,v_{r-1},s)\in \mathbb S_{r-1}\times (0,\infty)$, the change of variables from $(V_1,\dots,V_{r-1},S)$ to $(X_1,\dots,X_r)$ has Jacobian determinant $s^{r-1}$, so the joint density of $(V_1,\dots,V_{r-1},S)$ is
\[
 \left(\prod_{i=1}^{r-1} \me^{-sv_i}\right) \me^{-s(1-v_1-\ldots-v_{r-1})}\, s^{r-1}
 =\frac{s^{r-1}\me^{-s}}{(r-1)!}\,(r-1)!
\]
(and the density is 0 elsewhere). This factorisation proves both the asserted distribution and the independence from $S$. 

Finally, define $f(v_1,\dots,v_r):=\min_{i\in[r]}\sum_{j=0}^{\ell-1}v_{i+j}$. Then 
\[
M_\ell=\ell-Sf(X_1/S,\ldots,X_r/S)
\]
 and $f(X_1/S,\ldots,X_r/S)$ has the distribution of $\xi_{r,\ell+1}$. Independence and $\mathbb E S=r$ give the claimed expectation identity~\eqref{eq:EXi}.
\end{proof}

Combining \eqref{eq:GeneralLower} and~\eqref{eq:EXi}, we obtain
\begin{equation}\label{eq:main-reduction}
 \pi(H_k^r)\ge\frac{k-2-\mathbb EM_{k-2}}r.
\end{equation}

Thus our aim is to upper bound the expectation of $M_{k-2}$. Informally speaking, we need to control the positive deviations of the centred sums $D_{i,k-2}$ simultaneously for all $i\in[r]$. Since $D_{i,\ell}$ is a sum of independent random variables, 
an obvious approach is to apply a concentration result (e.g.\ the Chernoff Bound) and then take the Union Bound over indices $i$. This indeed gives a reasonable upper bound on $\mathbb EM_{k-2}$ that is, in particular, strong enough to prove the lower bound of Corollary~\ref{cor:applications}\ref{it:Range1}, namely that 
 \begin{equation}\label{eq:Range1}
 	 \pi(H_k^r)\ge (1+o(1))k/r,\quad\text{when $k/\log r\to\infty$}.
 \end{equation} 
Compared with~\eqref{eq:Range1}, the essence of Theorem~\ref{thm:main} is to obtain good explicit lower-order terms. So, to illustrate some concepts and ideas used later, we first present a simple self-contained proof of a weaker result that implies~\eqref{eq:Range1}.
It is obtained by writing out the standard derivation of the Chernoff Bound via moment-generating functions,
with the Union Bound replaced by a max-versus-sum inequality (inside~\eqref{eq:UB}).

\begin{proposition}\label{pr:EM} For all integers $r\ge2$ and $3\le k\le r+1$, we have
\begin{equation}\label{eq:direct-circular-bound}
 \mathbb EM_{k-2}\le\sqrt{2(k-2)\log r}.
\end{equation}

\end{proposition}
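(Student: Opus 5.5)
Write $\ell:=k-2$, so $1\le\ell\le r-1$. The plan is the standard exponential-moment (Chernoff) argument, with the union bound replaced by bounding a maximum by a sum inside an expectation.

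First, fix $\theta\in(0,1)$ and apply Jensen's inequality to the convex function $x\mapsto\me^{\theta x}$:
\begin{equation}\label{eq:UB}
\me^{\theta\,\mathbb E M_\ell}\le\mathbb E\,\me^{\theta M_\ell}=\mathbb E\max_{i\in[r]}\me^{\theta D_{i,\ell}}\le\sum_{i\in[r]}\mathbb E\,\me^{\theta D_{i,\ell}}.
\end{equation}
Each $D_{i,\ell}$ is a sum of $\ell$ independent copies of $1-X$, where $X$ is exponential with mean $1$; the indices $i,\dots,i+\ell-1$ are distinct modulo $r$ because $\ell\le r-1$. Since $\mathbb E\,\me^{-\theta X}=1/(1+\theta)$, we get $\mathbb E\,\me^{\theta D_{i,\ell}}=\bigl(\me^{\theta}/(1+\theta)\bigr)^{\ell}$. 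Taking logarithms in~\eqref{eq:UB} gives
\[
\mathbb EM_\ell\le\frac{\log r+\ell\bigl(\theta-\log(1+\theta)\bigr)}{\theta}.
\]

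Second, use the elementary bound $\theta-\log(1+\theta)\le\theta^2/2$ for $\theta\ge0$. This holds because the derivative of the difference is $\theta/(1+\theta)-\theta\le0$ and the difference vanishes at $\theta=0$. Hence, for every $\theta>0$,
\[
\mathbb EM_\ell\le\frac{\log r}{\theta}+\frac{\ell\theta}{2}.
\]
Taking $\theta:=\sqrt{2\log r/\ell}$ yields exactly $\sqrt{2\ell\log r}$, which is~\eqref{eq:direct-circular-bound}.

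The one step needing care is that this choice of $\theta$ may exceed $1$. That is harmless: the moment generating function $\mathbb E\,\me^{-\theta X}$ is finite for every $\theta>0$, since only negative multiples of $X$ appear in the exponent. So the computation above is valid for all $\theta>0$ and no restriction on $\theta$ is needed. Since $r\ge2$, we have $\log r>0$, so $\theta>0$ is well defined.
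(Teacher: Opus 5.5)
Your proof is correct and follows the paper's argument essentially verbatim. Both use the exact moment generating function $\me^{\theta}/(1+\theta)\le \me^{\theta^2/2}$, Jensen's inequality with the maximum over the $r$ windows bounded by the sum, and the choice $\theta=\sqrt{2\log r/\ell}$; your two side remarks are also correct, namely that the $\ell\le r-1$ indices of each window are distinct modulo $r$ and that no upper restriction on $\theta$ is needed.
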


\begin{proof}
Put $\ell:=k-2$. For every $t>0$,
\begin{equation}\label{eq:E(1-X)}
 \mathbb E\me^{t(1-X_1)}=\int_0^\infty \me^{t(1-x)}\,\me^{-x}\,dx=\frac{\me^t}{1+t}\le \me^{t^2/2},
 \end{equation}
 where the last inequality follows by
 \[ 
 t-\log(1+t)=\int_0^t\frac{u}{1+u}\,du\le \int_0^t {u}\,du=\frac{t^2}{2}.
\]
Each $D_{i,\ell}$ is a sum of independent random variables, so
\begin{equation}\label{eq:direct-window-mgf}
 \mathbb E\me^{tD_{i,\ell}} =\mathbb E\left(\prod_{j=0}^{\ell-1} \me^{t(1-X_{i+j})}\right)= \prod_{j=0}^{\ell-1}\mathbb E \me^{t(1-X_{i+j})}
 =\left(\frac{\me^t}{1+t}\right)^\ell
 \le\me^{\ell t^2/2}.
\end{equation}
Since $M_\ell=\max_{i\in[r]}D_{i,\ell}$, Jensen's inequality and~\eqref{eq:direct-window-mgf} give
\begin{equation}\label{eq:UB}
 \me^{t\mathbb EM_\ell}
 \le\mathbb E\me^{tM_\ell}
 = \mathbb E\left(\max_{i\in[r]}\me^{tD_{i,\ell}}\right)
 \le\mathbb E\left(\sum_{i=1}^r\me^{tD_{i,\ell}}\right)
 \le r\me^{\ell t^2/2}.
\end{equation}
Taking logarithms and choosing $t:=\sqrt{2\log r/\ell}$ gives
\[
 \mathbb EM_\ell\le\frac{\log r}{t}+\frac{\ell t}{2}
 =\sqrt{2\ell\log r},
\]
 which gives the claimed upper bound on the expectation of $M_\ell$.
\end{proof}

When $k=\Theta(r)$,  the  lower bound on $\pi(H_k^r)$ coming from Proposition~\ref{pr:EM}  has an error term of order $\sqrt{\log r/r}$.
In particular, it loses a factor $\sqrt{\log r}$ compared with
Theorem~\ref{thm:main} when $k/r$ stays bounded away from $0$ and~$1$.
The logarithm comes from bounding the maximum over all $r$ cyclic intervals of length $\ell$ by
the sum of $r$ exponential moments, without using their overlaps. The main idea for removing this factor is to use a maximal-type
inequality (namely, Lemma~\ref{lem:partial-max}) that controls at once all nested partial sums (of different lengths but with the same starting index). 
In order to control the cyclic intervals of length $\ell$, we consider overlapping blocks of intervals, split intervals in a block into a suffix and a prefix along a common cut and apply the maximal inequality on each side of the cut.
The proof is quite general and the only property of the random variable $X_1$ (and thus of each of its independent copies $X_i$) that we use is that for both $U:=X_1-1$ and $U:=1-X_1$, we have $\mathbb EU=0$ and
\begin{equation}\label{eq:one-sided-mgf}
 \log\mathbb E\me^{tU}\le\frac{t^2}{2(1-t)},
 \quad \text{for $0\le t<1$}.
\end{equation}
For the second choice this follows from~\eqref{eq:E(1-X)}, while
for the first it can be proved by e.g.\ comparing the corresponding power series term by term:
\[
 \log\mathbb E\me^{t(X_1-1)}
 =-t-\log(1-t)
 =\sum_{q=2}^{\infty}\frac{t^q}{q}
 \le \frac{t^2}{2} \sum_{q=0}^{\infty}{t^q}
 =\frac{t^2}{2(1-t)}.
\]

The next lemma follows directly from Doob's $L^2$ maximal inequality
(see e.g.~\cite[Theorem~4.4.4]{Durrett19pte}), applied to the nonnegative
submartingale $\me^{\theta T_j}$, and Cauchy--Schwarz.
We include a proof using the standard
first-crossing argument; compare with the proof of
\cite[Theorem~4.4.2]{Durrett19pte}.

\begin{lemma}\label{lem:partial-max}
Let $n\ge0$ be an integer and let $U_1,\ldots,U_n$ be independent copies of an integrable random
variable $U$ with $\mathbb EU=0$ satisfying~\eqref{eq:one-sided-mgf}.
Put $T_0:=0$ and $T_j:=\sum_{i=1}^jU_i$ for $j\in[n]$.
Then, for every $0<\theta<1/2$,
\begin{equation}\label{eq:partial-max-mgf}
 \mathbb E\exp\left(\theta\max_{0\le j\le n}T_j\right)
 \le2\exp\left(\frac{n\theta^2}{1-2\theta}\right).
\end{equation}
\end{lemma}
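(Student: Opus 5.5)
We will prove \eqref{eq:partial-max-mgf} by combining Doob's $L^2$ maximal inequality with Cauchy--Schwarz. To stay self-contained we will prove the maximal inequality through a first-crossing (stopping time) argument.

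\textbf{Step 1 (submartingale).} Put $Y_j:=\me^{\theta T_j}$ for $0\le j\le n$. By~\eqref{eq:one-sided-mgf}, $\mathbb E\me^{\theta U}<\infty$. Since $\mathbb EU=0$, Jensen gives $\mathbb E\me^{\theta U}\ge 1$. Hence, with respect to the natural filtration $\mathcal F_j=\sigma(U_1,\dots,U_j)$, we get $\mathbb E(Y_{j+1}\mid\mathcal F_j)=Y_j\,\mathbb E\me^{\theta U}\ge Y_j$. So $(Y_j)$ is a nonnegative submartingale, and so is $(Y_j^2)=(\me^{2\theta T_j})$, because $2\theta<1$ keeps the moment generating function finite.

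\textbf{Step 2 (Doob $L^2$).} Let $Y^*:=\max_{0\le j\le n}Y_j=\exp(\theta\max_j T_j)$. We use the standard first-crossing argument. For $\lambda>0$, let $\tau$ be the first $j$ with $Y_j\ge\lambda$. On $\{\tau=j\}$ the submartingale property gives $\lambda\,\mathbb P(\tau=j)\le\mathbb E(Y_j\mathbf 1_{\tau=j})\le \mathbb E(Y_n\mathbf 1_{\tau=j})$. Summing over $j$ yields $\lambda\mathbb P(Y^*\ge\lambda)\le\mathbb E(Y_n\mathbf 1_{Y^*\ge\lambda})$. Next we integrate $2\lambda\,\mathbb P(Y^*\ge\lambda)\,d\lambda$ over $(0,\infty)$ and apply Fubini. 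This gives $\mathbb E(Y^*)^2\le 2\mathbb E(Y_nY^*)\le 2\sqrt{\mathbb E Y_n^2}\sqrt{\mathbb E(Y^*)^2}$, hence $\mathbb E(Y^*)^2\le 4\mathbb EY_n^2$. To divide by $\sqrt{\mathbb E(Y^*)^2}$ we need it finite. This holds because $Y^*\le\sum_j Y_j$ and each $Y_j$ is square-integrable.

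\textbf{Step 3 (conclude).} By Cauchy--Schwarz, $\mathbb E Y^*\le\sqrt{\mathbb E(Y^*)^2}\le 2\sqrt{\mathbb E\me^{2\theta T_n}}$. By independence and~\eqref{eq:one-sided-mgf} with $t=2\theta<1$,
$$\mathbb E\me^{2\theta T_n}=\bigl(\mathbb E\me^{2\theta U}\bigr)^n\le\exp\Bigl(\frac{n(2\theta)^2}{2(1-2\theta)}\Bigr)=\exp\Bigl(\frac{2n\theta^2}{1-2\theta}\Bigr).$$
Taking the square root gives exactly $2\exp\bigl(n\theta^2/(1-2\theta)\bigr)$. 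The case $n=0$ is trivial.

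The only delicate point is Step 2: we need the truncation or finiteness justification, and the exact constant $4$ in Doob's inequality, which is what produces the factor $2$ in \eqref{eq:partial-max-mgf}. The rest is routine.
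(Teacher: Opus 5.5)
Your proof is correct and is essentially the route the paper itself points to: the paper remarks that the lemma follows from Doob's $L^2$ maximal inequality applied to $\me^{\theta T_j}$ together with Cauchy--Schwarz, and your Steps 1--3 carry this out with the same constant $2$. The paper's written proof differs only in the final step: it runs the first-crossing argument on $\me^{2\theta T_j}$ to get the weak-type bound $v^2\,\mathbb P(M\ge v)\le b^n$ with $b:=\mathbb E\me^{2\theta U}$, and then integrates $\min\{1,b^n/v^2\}$ over $v>0$ to obtain $\mathbb E M\le 2b^{n/2}$ directly, which avoids the strong-type $L^2$ bound, the Cauchy--Schwarz step and your finiteness check.
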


\begin{proof}
Put $b:=\mathbb E\me^{2\theta U}$ and
$M:=\max_{0\le j\le n}\me^{\theta T_j}$.
Jensen's inequality gives $b\ge\me^{2\theta\mathbb EU}=1$, and~\eqref{eq:one-sided-mgf}
ensures that $b<\infty$. For $v>0$, let $E_j$ be the event that
$j$ is the first index for which $\me^{\theta T_j}\ge v$.
The events $E_0,\ldots,E_n$ are disjoint and have union $\{M\ge v\}$.
By independence of the remaining increments,
\[
 \mathbb E\bigl(\me^{2\theta T_n}\mathbf1_{E_j}\bigr)
 =b^{n-j}\mathbb E\bigl(\me^{2\theta T_j}\mathbf1_{E_j}\bigr)\ge b^{n-j}v^2\mathbb P(E_j)
 \ge v^2\mathbb P(E_j).
\]
Summing over $j$ yields
$$
 v^2\mathbb P(M\ge v)
 \le\sum_{j=0}^n\mathbb E\bigl(\me^{2\theta T_n}\mathbf1_{E_j}\bigr)
 =\mathbb E\bigl(\me^{2\theta T_n}\mathbf1_{\{M\ge v\}}\bigr)
 \le\mathbb E\me^{2\theta T_n}=b^n.
$$ 
Hence
\[
 \mathbb EM
 \le\int_0^{\infty}\min\left\{1,\frac{b^n}{v^2}\right\}\,dv 
 =\int_0^{b^{n/2}}1\,dv
       +\int_{b^{n/2}}^{\infty}\frac{b^n}{v^2}\,dv
 =2b^{n/2}
 \le2\exp\left(\frac{n\theta^2}{1-2\theta}\right),
\]
where the last inequality is~\eqref{eq:one-sided-mgf} with $t=2\theta$.
\end{proof}

We now use Lemma~\ref{lem:partial-max} to control overlapping cyclic
intervals. Recall that empty sums are understood to be zero.

\begin{lemma}\label{lem:scan}
Let $r\ge2$ be an integer and let $U_1,\ldots,U_r$ be independent copies of an integrable random
variable $U$ with $\mathbb EU=0$ satisfying~\eqref{eq:one-sided-mgf}.
For an integer $1\le\ell\le r/2$, put
\[
 D'_{i,\ell}:=\sum_{j=0}^{\ell-1}U_{i+j},
 \quad\text{and}\quad 
 M'_\ell:=\max_{i\in[r]}D'_{i,\ell}.
\]
Then
\begin{align}
 \mathbb EM'_\ell
 \le2\sqrt{(2\ell-1)\log(4r/\ell)}+2\log(4r/\ell).
 \label{eq:scan-bound}
\end{align}
\end{lemma}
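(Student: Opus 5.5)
The plan follows the blocking idea described just before Lemma~\ref{lem:partial-max}. Partition the cyclic index set $\Z{r}$ into $B:=\lceil r/\ell\rceil$ consecutive blocks of length at most $\ell$, with cut points $c_1,\dots,c_B$. Since $\ell\le r/2$, every cyclic interval $\{i,\dots,i+\ell-1\}$ contains at least one cut point. For the window starting at $i$, pick the first cut $c$ with $i< c\le i+\ell$ (so $c-i\le\ell$), and write
\[
D'_{i,\ell}=\sum_{j=i}^{c-1}U_j+\sum_{j=c}^{i+\ell-1}U_j .
\]
The first sum is a suffix ending at $c-1$ of length at most $\ell$. The second is a prefix starting at $c$ of length less than $\ell$. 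Hence
\[
M'_\ell\le\max_{b\in[B]}\bigl(Y_b+Z_b\bigr),
\]
where $Y_b:=\max_{0\le m\le \ell}\sum_{j=c_b-m}^{c_b-1}U_j$ is the maximal backward partial sum from cut $b$, and $Z_b:=\max_{0\le m\le \ell-1}\sum_{j=c_b}^{c_b+m-1}U_j$ is the maximal forward partial sum. Each of $Y_b$ and $Z_b$ is the maximum of the partial sums of independent copies of $U$. Lemma~\ref{lem:partial-max} therefore applies to each, with $n=\ell$ and $n=\ell-1$ respectively. The backward ordering is harmless, since reversing the order of i.i.d.\ variables does not change the law.

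For a fixed $b$, the variables $Y_b$ and $Z_b$ depend on disjoint sets of $U_j$'s, provided $2\ell-1\le r$, which holds. So they are independent, and by Lemma~\ref{lem:partial-max},
\[
\mathbb E\me^{\theta(Y_b+Z_b)}\le 4\exp\Bigl(\frac{(2\ell-1)\theta^2}{1-2\theta}\Bigr).
\]
I would then run the max-versus-sum argument of~\eqref{eq:UB}, summing over only $B$ blocks instead of $r$ indices:
\[
\theta\,\mathbb E M'_\ell\le \log\bigl(4B\bigr)+\frac{(2\ell-1)\theta^2}{1-2\theta}.
\]
Using $B\le r/\ell+1\le 2r/\ell$ gives $\log(4B)\le\log(8r/\ell)$. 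I will need to adjust the bookkeeping slightly to reach the paper's $\log(4r/\ell)$; for example, I can choose blocks of length exactly $\ell$ and let the last one be shorter, so that $B\le r/\ell+1$, and absorb the constant. Alternatively, the factor $4$ from the two applications of Lemma~\ref{lem:partial-max} might be reduced by applying the lemma once to the combined two-sided walk.

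Finally, optimise over $\theta\in(0,1/2)$. Write $a:=2\ell-1$ and $L:=\log(4r/\ell)$, and substitute $u=2\theta$ or use $\theta/(1-2\theta)$. The expression $L/\theta+a\theta/(1-2\theta)$ is minimised at $\theta=\sqrt L/(2\sqrt L+\sqrt{a})$. This yields exactly $\sqrt{aL}\cdot$const$\,+2L$, namely a bound of the form $2\sqrt{aL}+2L$, matching~\eqref{eq:scan-bound}. Concretely, with $\theta=\sqrt{L}/(\sqrt a+2\sqrt L)$ one gets $1-2\theta=\sqrt a/(\sqrt a+2\sqrt L)$. Then $L/\theta=\sqrt{L}(\sqrt a+2\sqrt L)$ and $a\theta/(1-2\theta)=\sqrt{aL}$, so the total is $2\sqrt{aL}+2L$.

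The main obstacle is the constant inside the logarithm. I must make sure the count of blocks times the factor $4$ (or $2\cdot 2$) from Lemma~\ref{lem:partial-max} is at most $4r/\ell$, and not $8r/\ell$. I would first try to avoid squaring the constant $2$: apply the first-crossing argument jointly, or note that the forward maximum needs only $n=\ell-1$ steps, and use block counts carefully. The splitting and the independence across each cut are routine.
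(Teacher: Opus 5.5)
Your decomposition is the same as the paper's. You split each window at a cut, bound the suffix and prefix by partial-sum maxima over disjoint lists of lengths $\ell$ and $\ell-1$, apply Lemma~\ref{lem:partial-max} to each, use independence to get the factor $4\exp\bigl((2\ell-1)\theta^2/(1-2\theta)\bigr)$, and finish with Jensen and the same $\theta$.

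The gap is the step you flagged yourself, and it is not bookkeeping. With a \emph{fixed} set of cut points, every $\ell$ consecutive positions of $\Z{r}$ must contain a cut. So the $B$ cyclic gaps between consecutive cuts are each at most $\ell$ and sum to $r$, which forces $B\ge\lceil r/\ell\rceil$ for any such scheme. Your argument therefore gives at best $2\sqrt{(2\ell-1)L'}+2L'$ with $L'=\log(4\lceil r/\ell\rceil)$. Whenever $\ell\nmid r$ this is strictly weaker than~\eqref{eq:scan-bound}; for example, $r=5$, $\ell=2$ gives $\log 12$ instead of $\log 10$. The right-hand side of~\eqref{eq:scan-bound} is an explicit increasing function of the logarithm, so there is nothing to ``absorb'' the excess into. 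Your other suggestion, a single joint application of Lemma~\ref{lem:partial-max} to reduce the factor $4$, does not work as stated. $Y_b+Z_b$ is a maximum over a two-parameter family (one endpoint on each side of the cut), not the running maximum of one walk, and you supply no maximal inequality with a better constant.

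The missing idea is to replace the integral covering of windows by cuts with a fractional one: use \emph{all} $r$ cuts, each with weight $1/\ell$. For $c\in\Z{r}$, let $W_c:=\max_{c-\ell\le i\le c-1}D'_{i,\ell}$ be the maximum over the $\ell$ windows your rule assigns to cut $c$. Your splitting gives $W_c\le Y_c+Z_c$, with $Y_c,Z_c$ defined like your $Y_b,Z_b$ but with $c_b$ replaced by $c$. Every window $i$ is assigned to exactly $\ell$ cuts, namely $c=i+1,\dots,i+\ell$. Hence, deterministically, $\ell\,\me^{\theta M'_\ell}\le\sum_{c\in\Z{r}}\me^{\theta W_c}$. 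Cyclic symmetry then gives $\mathbb E\me^{\theta M'_\ell}\le\frac{r}{\ell}\,\mathbb E\me^{\theta(Y_c+Z_c)}\le\frac{4r}{\ell}\exp\bigl(\frac{(2\ell-1)\theta^2}{1-2\theta}\bigr)$, which yields exactly $\log(4r/\ell)$. This is the paper's argument; its $V_a$ is your $W_{a+\ell}$.

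Your weaker bound with $\lceil r/\ell\rceil$ would still suffice downstream for Theorem~\ref{thm:main}, since $4\lceil r/s\rceil\le 6r/s\le\me^2r^2/s^2$ there. It does not, however, prove the lemma as stated.
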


\begin{proof}
Fix $0<\theta<1/2$.
For $a\in[r]$, let
$V_a:=\max_{0\le j<\ell}D'_{a+j,\ell}$.
Any index $s$ attaining the maximum value of $D'_{s,\ell}$ appears (as $s=a+j$) in the definition of $\ell$ variables $V_a$.
 Therefore it holds deterministically that
\[
 \ell\me^{\theta M'_\ell}\le\sum_{a=1}^r\me^{\theta V_a}.
\]
The random variables $V_a$ have the same distribution by cyclic symmetry. Thus
\begin{equation}\label{eq:cyclic-averaging}
 \mathbb E\me^{\theta M'_\ell}
 \le\frac r\ell\,\mathbb E\me^{\theta V_1}.
\end{equation}

For $1\le i\le\ell$, by splitting the cyclic interval $\{i,\dots,i+\ell-1\}$ into two sub-intervals, we have
\[
 D'_{i,\ell}=\sum_{h=i}^{\ell-1}U_h
                  +\sum_{h=\ell}^{i+\ell-1}U_h.
\]
It follows that $V_1\le A+B$, where
\[
 A:=\max_{0\le j\le\ell-1}\sum_{h=1}^jU_{\ell-h},
 \qquad
 B:=\max_{0\le j\le\ell}\sum_{h=0}^{j-1}U_{\ell+h}.
\]
These are the maxima of partial sums on disjoint lists of lengths
$\ell-1$ and $\ell$, respectively, so they are independent.
Lemma~\ref{lem:partial-max} gives
\[
 \mathbb E\me^{\theta V_1}
 \le\mathbb E\me^{\theta( A+B)}=\mathbb E\me^{\theta A}\,\mathbb E\me^{\theta B}
 \le4\exp\left(\frac{(2\ell-1)\theta^2}{1-2\theta}\right).
\]
Together with Jensen's inequality and~\eqref{eq:cyclic-averaging},
this implies
\begin{equation}\label{eq:cyclic-optimize}
 \mathbb EM'_\ell\le \frac1\theta\log(\mathbb E\me^{\theta M'_\ell})\le\frac{\log(4r/\ell)}\theta
                  +\frac{(2\ell-1)\theta}{1-2\theta}.
\end{equation}
Put $a:=\log(4r/\ell)$ and $v:=2\ell-1$.
The choice $\theta:=\sqrt a/(\sqrt v+2\sqrt a)$ lies in $(0,1/2)$
and makes the right side of~\eqref{eq:cyclic-optimize} equal to
$2\sqrt{va}+2a$.
This proves~\eqref{eq:scan-bound}.
\end{proof}

\begin{proof}[Proof of Theorem~\ref{thm:main}]
Put $\ell:=k-2$ and $s:=\min\{\ell,r-\ell\}$.

First, suppose that $\ell\le r/2$. We take $U_j:=1-X_j$ in Lemma~\ref{lem:scan}. In the notation of the lemma, we have $D'_{i,\ell}=D_{i,\ell}$ for each $i\in [r]$ and
thus $M'_\ell=M_\ell$.
So the lemma implies (by $2s-1\le2s$ and $\log(4r/s)\le2\log(\me r/s)$) that
\begin{equation}\label{eq:minimum-lower-estimate}
 \mathbb EM_\ell
 \le4\left(\sqrt{s\log\frac{\me r}{s}}
                         +\log\frac{\me r}{s}\right).
\end{equation}

Next, suppose that $\ell>r/2$. Then $s=r-\ell$ and we take $U_j:=X_j-1$. It holds that $D'_{i,m}=-D_{i,m}$ for all $i$ and $m$. By considering complementary intervals, we have
\[
 D_{i,\ell}=r-S-D_{i+\ell,s},\quad\text{for every $i\in [r]$},
\]
and thus $M_\ell=r-S+M'_s$.
Taking expectations, using $\mathbb ES=r$, and
applying Lemma~\ref{lem:scan} gives~\eqref{eq:minimum-lower-estimate}
again. 

In either case,~\eqref{eq:minimum-lower-estimate} holds and its combination with~\eqref{eq:main-reduction} proves
the theorem.
\end{proof}

Next, we prove Corollary~\ref{cor:applications}, which illustrates the bound of Theorem~\ref{thm:main} in some ranges of $k=k(r)$.

\begin{proof}[Proof of Corollary \ref{cor:applications}]
	Let $s:=\min\{k-2,r-k+2\}$.
	By~\eqref{eq:(k-2)/r} and Theorem \ref{thm:main},
	\begin{equation}\label{eq:corollary-from-main}
		0\le \frac{k-2}{r}-\pi(H_k^r)
		\le \frac{4}{r}
		\left(
		\sqrt{s\log\frac{\me r}{s}}
		+\log\frac{\me r}{s}
		\right).
	\end{equation}
	
	We first assume that $k/\log r\to\infty$, which implies $\log(\me r/(k-2))=o(k)$.
	If $k-2\le r/2$, then $s=k-2$ and the right-hand side of~\eqref{eq:corollary-from-main} is clearly $o(k/r)$.
	If $k-2>r/2$, then $1\le s<k$, so
	$$\sqrt{s\log\frac{\me r}{s}}+\log\frac{\me r}{s}
	\le \sqrt{s\log(\me r)}+\log(\me r)
	=o(k).$$
	Thus, in both cases, the right-hand side of
	\eqref{eq:corollary-from-main} is $o(k/r)$, proving
	Part~\ref{it:Range1} of the corollary. (Alternatively, this part follows from Proposition~\ref{pr:EM}.)
	
	Finally, suppose that $k/r\to\alpha\in(0,1)$. Recall that $\beta=\min\{\alpha,1-\alpha\}$. 
	Then $s/r\to\beta$.
	Consequently,
	$$
	\frac{4}{r} \left(\sqrt{s\log\frac{\me r}{s}}+\log\frac{\me r}{s}\right)
	=\frac{4}{\sqrt r}\sqrt{\frac{s}{r}\log\frac{\me r}{s}}+O\left(\frac1r\right)
	=\frac{4\sqrt{\beta\log(\me /\beta)}+o(1)}{\sqrt r},$$
	as desired.
\end{proof}

Finally, we prove Corollary~\ref{cor:half-optimal}, which estimates the ratio between the upper bound in~\eqref{eq:(k-2)/r} and the new lower bounds as $r\to\infty$ for arbitrary $k=k(r)$. 
We will be rather brief and omit routine details.

\begin{proof}[Proof of Corollary~\ref{cor:half-optimal}]
	Let $r$ be sufficiently large. By Theorem~\ref{thm:main}, 
	it is enough to consider, say,  $k(r)\le\sqrt r$, as the bound from the theorem is uniform for $k>\sqrt r$.

Let $G$ be the union of $b:=\lfloor(k-1)/2\rfloor$ independently and uniformly permuted copies of an $H_3^r$-free $r$-graph $G'$ on $n\to\infty$ vertices of edge density $p=(\me^{-1}-o(1))/r$. Such an $r$-graph $G'$ exists by Theorem~\ref{thm:mainPoisson}, after deleting edges if necessary. Every $(r+1)$-set spans at most $2b\le k-1$ edges in $G$ and the expected edge density of $G$ is
	\[
	1-(1-p)^b\ge bp-\binom b2p^2
	\ge\left(\frac1{2\me}-o(1)\right)\frac{k-2}{r},
	\]
	uniformly for $3\le k\le \sqrt{r}$. This implies the first assertion.
	
	For the second assertion, first note that $c_h\ge (1-o(1))h$ as $h\to\infty$ which can be seen by taking, for example, $\lambda:=h-h^{2/3}$. 
	\hide{The upper bound $c_h\le h$ follows from
		\[
		\lambda\mathbb P(Z\le h-1)
		=\mathbb E[Z\mathbf 1_{\{Z\le h\}}]\le h
		\quad\text{for }Z\sim\operatorname{Pois}(\lambda).
		\]
		
		For the lower bound, let $\lambda:=h-h^{2/3}$. Chebyshev's inequality gives $\mathbb P(Z\ge h)\le\lambda/h^{4/3}\le h^{-1/3}$, so
		\[
		c_h\ge(h-h^{2/3})(1-h^{-1/3})=(1-o(1))h.
		\]
	}
	Given $\varepsilon>0$, fix an integer $h$ sufficiently large and then a real $\eta>0$ sufficiently small so that $(c_h-\eta)/(2h)>1/2-\varepsilon/2$.
	For all sufficiently large $r$, let $G$ be the union of  $b:=\lfloor(k-1)/(2h)\rfloor$ independently and uniformly permuted copies of an $H_{2h+1}^r$-free $r$-graph of edge density $p\ge (c_h-\eta)/r$, which exists by Theorem~\ref{thm:mainPoisson}. Each $(r+1)$-set contains at most $2hb\le k-1$ edges in $G$ and the expected edge density of $G$ is at least
	\[
	\frac{b(c_h-\eta)}r-O\left(\frac{b^2}{r^2}\right)
	=\left(\frac{c_h-\eta}{2h}-o(1)\right)\frac{k}{r}\ge \left(\frac12-\varepsilon\right)\frac{k}{r},
	\]
	implying the second assertion.
\end{proof}

\section{Augmentations of \texorpdfstring{$t$}{t}-graphs with zero kernel}\label{sec:2}


In this section, we fix a non-empty $t$-graph $F$ of uniformity $t\ge 2$. Recall that $B_F$ denotes the edge--vertex incidence matrix of $F$ and, for $r\ge t$, the $r$-augmentation $F(r)$ is the $r$-graph obtained from $F$ by adding the same set of $r-t$ new vertices to every edge of~$F$. We will refer to the set of the new $r-t$ vertices as the \emph{stem} of $F(r)$. The vertices in $V(F)$ will be called \emph{base} vertices.

\hide{
It will be convenient to operate with the following property, which we show in Lemma~\ref{lem:app-rank-rigidity} to be equivalent to $B_F$ having zero kernel.

\begin{definition}\label{def:app-sum-rigid}
Call the  $t$-graph $F$  \emph{sum-rigid} if, whenever real numbers $d$ and $(x_v)_{v\in V(F)}$ satisfy
\[
 \sum_{v\in e}x_v=d
 \quad\text{for every }e\in E(F),
\]
then all the numbers $x_v$ for $v\in V(F)$ are equal.
\end{definition}

\begin{lemma}\label{lem:app-rank-rigidity}
The $t$-graph $F$ is sum-rigid if and only if the matrix $B_F$ has zero kernel over $\mathbb R$.
\end{lemma}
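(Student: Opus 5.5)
We need to prove: $F$ is sum-rigid if and only if $B_F$ has zero kernel over $\mathbb R$. Here sum-rigid means that whenever $\sum_{v\in e}x_v=d$ for all edges $e$, all $x_v$ are equal.

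The whole argument rests on one observation, which I will use for both directions. The all-ones vector satisfies $B_F\mathbf 1=t\mathbf 1$, because every row of $B_F$ has exactly $t$ ones. So a system with constant right-hand side $d\mathbf 1$ can always be shifted by a multiple of $\mathbf 1$ to become homogeneous.

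\emph{Zero kernel implies sum-rigid.} Suppose $B_F\V{x}=d\mathbf 1$.
1. Put $\V{y}:=\V{x}-(d/t)\mathbf 1$.
2. Then $B_F\V{y}=d\mathbf 1-(d/t)t\mathbf 1=\V{0}$.
3. Since $\ker B_F=\{\V 0\}$, we get $\V{y}=\V{0}$.
4. Hence $\V{x}=(d/t)\mathbf 1$, so all coordinates are equal.

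\emph{Sum-rigid implies zero kernel.} Take $\V{x}\in\ker B_F$. This is the case $d=0$ of the definition.
1. Sum-rigidity gives $\V{x}=c\mathbf 1$ for some real $c$.
2. Then $\V 0=B_F\V{x}=ct\mathbf 1$.
3. Since $F$ is non-empty, $B_F$ has at least one row, and $t\ge 2>0$. Therefore $c=0$.
4. So $\V{x}=\V{0}$.

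The only point needing care is the use of non-emptiness of $F$ in the second direction. If $F$ had no edges, every vector would satisfy the sum condition vacuously, and the kernel would be everything. Everything else is one-line linear algebra, so I expect no real obstacle.
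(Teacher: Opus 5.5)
Your proof is correct and follows essentially the same route as the paper: shift by $(d/t)\mathbf 1$ using $B_F\mathbf 1=t\mathbf 1$ for one direction, and apply sum-rigidity with $d=0$ and then $ct\mathbf 1=\V{0}$ for the other. Your remark that non-emptiness of $F$ is needed to conclude $c=0$ is a point the paper leaves implicit, relying on its standing assumption that $F$ is non-empty.
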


\begin{proof} Suppose first that $B_F$ has zero kernel and that all edge sums of a vector $\V{x}\in \mathbb R^{V(F)}$ are equal to $d$.  Since every edge of $F$ has $t$ vertices, it holds that
$B_F\mathbf 1=t\mathbf 1$.
Hence
\[
 B_F\left(\V{x}-\frac dt\mathbf 1\right)=\V{0}.
\]
The kernel of $B_F$ is trivial by our assumption, so $\V{x}=(d/t)\mathbf 1$ and $F$ is sum-rigid.

Conversely, suppose that $F$ is sum-rigid and $B_F\V{x}=0$ for some vector $\V{x}\in \mathbb R^{V(F)}$.  All edge sums of $\V{x}$ are equal to zero, so $\V{x}=c\mathbf 1$ for some $c\in\mathbb R$ by the rigidity assumption.  Then
\[
 0=B_F\V{x}=B_F (c\mathbf 1)=cB_F\mathbf 1=ct\mathbf 1,
\]
which gives $c=0$.  Thus the kernel of $B_F$ is trivial, as desired.
\end{proof}
}

Our proof of Theorem~\ref{end:thm:main} will take some prime $q=q(r)$ depending on $r$ (in order to define the first checksum in the finite field $\mathbb F_q$ with $q$ elements). We will need $q$ to satisfy certain properties.  The following lemma lists these properties (with the third one being crucial in the proof of Theorem~\ref{end:thm:main}) and shows that there are only finitely many exceptional primes.

\begin{lemma}\label{lem:app-good-characteristics}
Suppose that $B_F$ has zero kernel over $\mathbb R$.  Then there is an integer $Q_F$ such that, for every prime $q> Q_F$,
\begin{enumerate}[label=\textup{(\roman*)}]
\item\label{it:i} $t$ is non-zero in $\mathbb F_q$;
\item\label{it:ii} $B_F$ has zero kernel over $\mathbb F_q$;
\item\label{it:iii} if $\V{x}\in\mathbb F_q^{V(F)}$ and $B_F\V{x}=d\mathbf 1$ for some $d\in\mathbb F_q$, then $\V{x}=(d/t)\mathbf 1$.
\end{enumerate}
\end{lemma}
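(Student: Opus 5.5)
The plan is to show that each of the three properties fails for only finitely many primes, and to take $Q_F$ as the largest of the resulting bounds. Throughout, $B_F$ is an integer matrix with rows indexed by $E(F)$ and columns by $V(F)$; write $n:=|V(F)|$.

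Property \ref{it:i} holds for every prime $q>t$.

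For \ref{it:ii}, note that zero kernel over $\mathbb R$ means $B_F$ has rank $n$ over $\mathbb Q$. Hence some $n\times n$ minor $\Delta$ of $B_F$ is a non-zero integer. For every prime $q>|\Delta|$, the same minor is non-zero modulo $q$. So $B_F$ has rank $n$ over $\mathbb F_q$, which is exactly zero kernel over $\mathbb F_q$. Alternatively, Hadamard's bound could be used to make this explicit, but that is not needed.

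For \ref{it:iii}, the key observation is that every edge has exactly $t$ vertices, so $B_F\mathbf 1=t\mathbf 1$ holds over any field. Suppose $q>\max\{t,|\Delta|\}$, and let $\V{x}\in\mathbb F_q^{V(F)}$ satisfy $B_F\V{x}=d\mathbf 1$. Since $t$ is invertible in $\mathbb F_q$ by \ref{it:i}, we get
\[
B_F\left(\V{x}-(d/t)\mathbf 1\right)=d\mathbf 1-d\mathbf 1=\V{0}.
\]
By \ref{it:ii} the kernel is trivial, so $\V{x}=(d/t)\mathbf 1$.

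Therefore $Q_F:=\max\{t,|\Delta|\}$ works. There is no real obstacle here. The only point needing care is passing full column rank from $\mathbb R$ to $\mathbb F_q$, and the non-vanishing integer minor handles this. Property \ref{it:iii} is then a formal consequence of \ref{it:i} and \ref{it:ii} via the identity $B_F\mathbf 1=t\mathbf 1$.
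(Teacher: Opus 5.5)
Your proof is correct and follows the paper's argument. A non-vanishing integer $n\times n$ minor of $B_F$ transfers full column rank to $\mathbb F_q$, and (iii) then follows from (i), (ii) and $B_F\mathbf 1=t\mathbf 1$; the only difference is that the paper takes $Q_F$ to be the largest prime divisor of $t|D|$, while your $Q_F:=\max\{t,|\Delta|\}$ is a cruder but equally valid threshold.
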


\begin{proof}
Let $h:=|V(F)|$.  Since $B_F$ has rank $h$ over $\mathbb R$, it has an $h\times h$ minor with non-zero determinant~$D$. Since the matrix $B_F$ is $\{0,1\}$-valued, $D$ is an integer. Let $Q_F$ be the largest prime divisor of $t|D|$. Let us verify that it satisfies the required properties. Take any prime $q> Q_F$. Then $t$ and $D$ are non-zero as elements of $\mathbb F_q$ and the above $h\times h$ minor remains nonsingular over $\mathbb F_q$, proving Items~\ref{it:i} and~\ref{it:ii}. The last item is a consequence of the first two and the identity $B_F\mathbf 1=t\mathbf 1$: if $B_F\V{x}=d\mathbf 1$ then
\[
 B_F\left(\V{x}-\frac dt\mathbf 1\right)
 =d\mathbf 1-\frac dt B_F\mathbf 1=\V{0}
\]
and the zero kernel property over $\mathbb F_q$ gives $\V{x}=(d/t)\mathbf 1$, as desired.
\end{proof}

We also need a few auxiliary combinatorial lemmas. The following lemma shows that the zero kernel condition guarantees a positive Tur\'an density for each augmentation.

\begin{lemma}\label{lem:app-augmentation-positive}
	If $B_F$ has zero kernel over $\mathbb R$, then,  for every $j\ge t$,  the $j$-graph $F(j)$ is not $j$-partite and, consequently,
	\begin{equation*}
		\pi(F(j))\ge\frac{j!}{j^j}>0.
	\end{equation*}
\end{lemma}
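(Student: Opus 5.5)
We argue by contradiction for the first claim and then use the standard complete $j$-partite construction for the density bound.

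Suppose $F(j)$ is $j$-partite, i.e.\ its vertex set admits a colouring $c:V(F(j))\to[j]$ in which every edge of $F(j)$ receives all $j$ colours. Every edge of $F(j)$ contains the whole stem, so the $j-t$ stem vertices get distinct colours. Let $R\subseteq[j]$ be the set of the remaining $t$ colours. Each base edge $e\in E(F)$, extended by the stem, must be rainbow, so $c$ maps $e$ bijectively onto $R$. In particular every base vertex that lies in an edge gets a colour from $R$. Isolated vertices of $F$ can simply be coloured with any colour in $R$; this does not affect rainbowness of edges.

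For each colour $\rho\in R$, define the vector $\V{x}^{\rho}\in\mathbb R^{V(F)}$ as the indicator of the base vertices of colour $\rho$. Since each edge contains exactly one vertex of colour $\rho$, we get $B_F\V{x}^{\rho}=\mathbf 1$. Taking two distinct colours $\rho\ne\rho'$ in $R$, which is possible since $t\ge2$, gives
\[
B_F(\V{x}^{\rho}-\V{x}^{\rho'})=\V{0}.
\]
By the zero kernel assumption, $\V{x}^{\rho}=\V{x}^{\rho'}$. The two vectors have disjoint supports, so both are zero. But $F$ is non-empty, so some edge contains a vertex of colour $\rho$, making $\V{x}^{\rho}\neq\V{0}$. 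This contradiction shows that $F(j)$ is not $j$-partite.

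For the density bound, take the complete $j$-partite $j$-graph on $n$ vertices with parts of size about $n/j$. It contains no $j$-graph that fails to be $j$-partite, since any subgraph inherits the rainbow colouring by parts. Hence it is $F(j)$-free. Its edge density tends to $j!/j^j$, which gives $\pi(F(j))\ge j!/j^j>0$.

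The only delicate point is handling isolated vertices of $F$ and the requirement $t\ge2$, both of which the argument above covers.
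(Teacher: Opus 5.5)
Your proof is correct and is essentially the paper's argument: the paper rules out isolated vertices via the zero-column observation, shows $F$ is not $t$-partite using the kernel vector equal to $1$ on one class and $-1$ on another (exactly your $\V{x}^{\rho}-\V{x}^{\rho'}$), reduces the $j$-partiteness of $F(j)$ to that of $F$ through the stem as you do, and finishes with the same balanced complete $j$-partite construction. (Your recolouring of isolated vertices is harmless but unnecessary: they contribute nothing to $B_F\V{x}^{\rho}$, and the zero-kernel hypothesis rules them out anyway.)
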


\begin{proof}
	First, observe that the original $t$-graph $F$ has no isolated vertices, since an isolated vertex would give a zero column of $B_F$.  Also, $F$ is not $t$-partite.  Indeed, if
	\(
	V(F)=V_1\cup\cdots\cup V_t
	\)
	were a $t$-partition with every edge containing one vertex from every part, then the vector which is $1$ on $V_1$, $-1$ on $V_2$, and $0$ elsewhere would be a non-zero element of the kernel of $B_F$.
	
	Suppose on the contrary that, for some $j\ge t$, the $j$-graph $F(j)$ is $j$-partite.  Its $j-t$ stem vertices occur together in every edge and must therefore lie in distinct vertex classes.  Every base vertex belongs to an edge, so the base vertices must lie inside the remaining $t$ classes, and every base edge contains exactly one vertex from each of them.  Thus $F$ is $t$-partite. This contradiction proves the first part of the lemma.
	
	For the second part, consider the complete balanced $j$-partite $j$-graph with $n\to\infty$ vertices. It is $F(j)$-free and its edge density is $j!/j^j+o(1)$, finishing the proof of the lemma.
\end{proof}

The standard averaging argument over vertex links can be applied to augmentations, giving the following monotonicity property.

\begin{lemma}\label{lem:app-augmentation-monotonicity}
	For every $j\ge t$,
	\begin{equation*}
		\pi(F(j))\ge\pi(F(j+1)).
	\end{equation*}
\end{lemma}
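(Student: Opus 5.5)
The plan is the standard link-averaging argument. Fix $\varepsilon>0$ and let $n$ be large. Take an $F(j+1)$-free $(j+1)$-graph $G$ on $n$ vertices with $|G|\ge(\pi(F(j+1))-\varepsilon)\binom{n}{j+1}$. For a vertex $v$, its \emph{link} $G_v:=\{A\setminus\{v\}: v\in A\in G\}$ is a $j$-graph on $n-1$ vertices. Every edge of $G$ is counted in exactly $j+1$ links, so $\sum_v|G_v|=(j+1)|G|$. Hence some vertex $v$ satisfies
\[
|G_v|\ge \frac{(j+1)|G|}{n}\ge(\pi(F(j+1))-\varepsilon)\frac{j+1}{n}\binom{n}{j+1}=(\pi(F(j+1))-\varepsilon)\binom{n-1}{j}.
\]

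The key observation is that $G_v$ is $F(j)$-free. Suppose $G_v$ contained a copy of $F(j)$ with stem $R$ ($|R|=j-t$) and base edges $e\cup R$ for $e\in E(F)$. None of these vertices equals $v$, since $G_v$ lives on $V(G)\setminus\{v\}$. Then the sets $e\cup R\cup\{v\}$ are all edges of $G$. They form a copy of $F(j+1)$ with stem $R\cup\{v\}$ of size $j+1-t$, which contradicts the choice of $G$.

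Therefore $\mathrm{ex}(n-1,F(j))\ge(\pi(F(j+1))-\varepsilon)\binom{n-1}{j}$ for all large $n$. Dividing by $\binom{n-1}{j}$, letting $n\to\infty$ (the limit exists by the result of Katona, Nemetz and Simonovits), and then letting $\varepsilon\to0$ gives $\pi(F(j))\ge\pi(F(j+1))$.

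There is no real obstacle here. The only points needing care are that the stem grows by exactly the vertex $v$, and that $v$ is distinct from every vertex of the copy of $F(j)$, which holds automatically because $v\notin V(G_v)$.
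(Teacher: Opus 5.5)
Your proposal is correct and follows essentially the same link-averaging argument as the paper. The only difference is cosmetic: you pick a single vertex whose link is at least average, while the paper bounds every link by $\mathrm{ex}(n-1,F(j))$ and sums over all vertices. Both routes give the same inequality $\mathrm{ex}(n,F(j+1))/\binom{n}{j+1}\le \mathrm{ex}(n-1,F(j))/\binom{n-1}{j}$ in the limit.
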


\begin{proof}
	Let $\mathcal H$ be an $F(j+1)$-free $(j+1)$-graph on $n$ vertices with $\mathrm{ex}(n,F(j+1))$ edges.  For each vertex $v$, its \emph{link}
	\[
	L_{\mathcal H}(v)
	:=\left\{A\in\tbinom{V(\mathcal H)\setminus\{v\}}j:A\cup\{v\}\in\mathcal H\right\}
	\]
	is $F(j)$-free: a copy of $F(j)$ in the link, with $v$ adjoined to every edge, would give a copy of $F(j+1)$ in $\mathcal H$.  Therefore
	\[
	(j+1)|\mathcal H|
	=\sum_{v\in V(\mathcal H)}|L_{\mathcal H}(v)|
	\le n\,\mathrm{ex}(n-1,F(j)).
	\]
	After division by $(j+1)\binom n{j+1}$, this becomes
	\[
	\frac{\mathrm{ex}(n,F(j+1))}{\binom{n}{j+1}}=\frac{|\mathcal H|}{\binom n{j+1}}
	\le
	\frac{\mathrm{ex}(n-1,F(j))}{\binom{n-1}j}.
	\]
	The lemma follows by taking $n\to\infty$. 
\end{proof}

The following well-known fact, proved by Katona, Nemetz and Simonovits~\cite{KatonaNemetzSimonovits64}, will also be used.

\begin{lemma}\label{end:lem:finite-density}
Let $J$ be a $j$-graph. Then the ratio ${\mathrm{ex}(n,J)}/{\binom nj}$ is nonincreasing for $n\ge j$. In particular, $\mathrm{ex}(n,J)/\binom{n}{j}\ge \pi(J)$ for every $n\ge j$.
\end{lemma}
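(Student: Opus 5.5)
We prove the monotonicity claim by the classical averaging argument of Katona, Nemetz and Simonovits, applied to an extremal $j$-graph; the second assertion then follows at once from the first.

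Fix $n>j$ and let $\mathcal H$ be a $J$-free $j$-graph on an $n$-set $V$ with $|\mathcal H|=\mathrm{ex}(n,J)$. For each $v\in V$, the induced subgraph $\mathcal H-v:=\mathcal H[V\setminus\{v\}]$ is a $J$-free $j$-graph on $n-1$ vertices. Hence
\[
|\mathcal H-v|\le \mathrm{ex}(n-1,J)\quad\text{for every } v\in V.
\]

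Now double count. Each edge $A\in\mathcal H$ survives in $\mathcal H-v$ exactly when $v\notin A$, which happens for $n-j$ choices of $v$. Therefore
\[
(n-j)\,\mathrm{ex}(n,J)=\sum_{v\in V}|\mathcal H-v|\le n\,\mathrm{ex}(n-1,J).
\]
Using the identity $\binom{n}{j}(n-j)=n\binom{n-1}{j}$, this rearranges to
\[
\frac{\mathrm{ex}(n,J)}{\binom nj}\le\frac{\mathrm{ex}(n-1,J)}{\binom{n-1}j}.
\]
This gives monotonicity for every $n\ge j+1$, that is, for all consecutive pairs starting at $n-1=j$.

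For the second assertion, the sequence $\mathrm{ex}(n,J)/\binom nj$ is nonincreasing and bounded below by $0$. It therefore converges to $\pi(J)$, and this limit is its infimum. Consequently every term is at least $\pi(J)$.

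The only step needing any care is the double-counting coefficient $n-j$ together with the binomial identity. Nothing else poses an obstacle.
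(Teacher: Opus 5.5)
Your proof is correct and is the same averaging argument the paper uses: the paper averages $|\mathcal H[W]|$ over a uniformly random $(n-1)$-subset $W$, which is exactly your sum over the deleted vertex $v$ divided by $n$, and both give $\mathrm{ex}(n-1,J)\ge\frac{n-j}{n}\,\mathrm{ex}(n,J)$. Your derivation of the second assertion (a nonincreasing sequence converges to its infimum, which here is $\pi(J)$) just spells out what the paper leaves implicit with ``in particular''.
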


\begin{proof}
For $n\ge j+1$, if $\mathcal H$ is an extremal $J$-free $j$-graph on $n$ vertices, then the subgraph induced by a uniformly random $(n-1)$-subset has expected size
\(
 (n-j)|\mathcal H|/n.
\)
Consequently,
\[
 \mathrm{ex}(n-1,J)\ge\frac{n-j}{n}\,\mathrm{ex}(n,J).
\]
Since $\binom{n-1}j=\frac{n-j}{n}\binom nj$, division by $\binom{n-1}j$ proves the monotonicity.  
\end{proof}

For a given $j$-graph $J$ and integers $\PP,n$, 
the following simple probabilistic argument gives $\PP$ $J$-free $j$-graphs  on the common vertex set $[n]$ whose union is ``large''. We find it convenient to assume that the constructed $j$-graphs are edge-disjoint and encode them by a single map $\varphi:\binom{[n]}j\longrightarrow
\Z{\PP}\cup\{\infty\}$. Recall that $\Z{\PP}=\{0,\dots,\PP-1\}$ denotes the set of residues modulo $\PP$. Here the $j$-graphs are the preimages of the elements of $\Z{\PP}$ under $\varphi$ while the set of uncovered edges is $\varphi^{-1}(\infty)$.

\begin{lemma}\label{lem:app-global-colouring}
Let $J$ be a fixed $j$-graph, let $0<\beta<\pi(J)$, and let $n,\PP$ be positive integers with $n\ge j$.  Then there is a random map
\[
 \varphi:\binom{[n]}j\longrightarrow
 \Z{\PP}\cup\{\infty\}
\]
with all of the following properties.
\begin{enumerate}[label=\textup{(\roman*)}]
\item\label{it:JFree} For every outcome and every $b\in\Z{\PP}$, the colour class $\varphi^{-1}(b)$ is $J$-free.
\item\label{it:Uncovered} For every fixed $B\in\binom{[n]}j$,
\begin{equation*}
 \mathbb P(\varphi(B)=\infty)\le \me^{-\beta \PP}.
\end{equation*}
\item\label{it:Invariance} The distribution of $\varphi$ is invariant under every permutation of $[n]$.
\end{enumerate}
\end{lemma}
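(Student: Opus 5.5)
The plan is to build $\varphi$ greedily from $\PP$ independent random relabellings of one fixed extremal $J$-free $j$-graph on $[n]$. By Lemma~\ref{end:lem:finite-density}, there is a $J$-free $j$-graph $\mathcal H$ on $[n]$ with
\[
|\mathcal H|=\mathrm{ex}(n,J)\ge\pi(J)\binom nj.
\]
Let $\sigma_0,\dots,\sigma_{\PP-1}$ be independent uniformly random permutations of $[n]$, and set $\mathcal H_b:=\sigma_b(\mathcal H)$ for $b\in\Z{\PP}$. Each $\mathcal H_b$ is isomorphic to $\mathcal H$, so it is $J$-free.

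For $B\in\binom{[n]}j$, we define $\varphi(B)$ to be the least $b\in\Z{\PP}$ (under the ordering $0<1<\dots<\PP-1$) with $B\in\mathcal H_b$. If no such $b$ exists, we set $\varphi(B):=\infty$.

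Property~\ref{it:JFree} holds for every outcome. Indeed, $\varphi^{-1}(b)\subseteq\mathcal H_b$, and a subgraph of a $J$-free $j$-graph is $J$-free.

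For Property~\ref{it:Uncovered}, fix $B$. Since $\sigma_b$ is uniform, $\sigma_b^{-1}(B)$ is a uniformly random $j$-subset of $[n]$. Hence
\[
\mathbb P(B\in\mathcal H_b)=\frac{|\mathcal H|}{\binom nj}\ge\pi(J)>\beta.
\]
These events are independent over $b$, so
\[
\mathbb P(\varphi(B)=\infty)=\Bigl(1-\tfrac{|\mathcal H|}{\binom nj}\Bigr)^{\PP}\le(1-\beta)^{\PP}\le\me^{-\beta\PP}.
\]

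For Property~\ref{it:Invariance}, take any fixed permutation $\tau$ of $[n]$. The tuple $(\tau\sigma_b)_b$ has the same distribution as $(\sigma_b)_b$, because left multiplication by $\tau$ preserves the uniform measure on permutations and independence is kept. The map $\varphi$ is a deterministic function of the tuple $(\sigma_b)_b$ that commutes with relabelling: replacing each $\sigma_b$ by $\tau\sigma_b$ produces the map $B\mapsto\varphi(\tau^{-1}B)$. Hence $\varphi\circ\tau^{-1}$ has the same distribution as $\varphi$.

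None of these steps is a serious obstacle. The only points needing care are the inequality $\mathrm{ex}(n,J)\ge\pi(J)\binom nj$ for finite $n$, which Lemma~\ref{end:lem:finite-density} supplies (this is where the hypothesis $n\ge j$ is used), and checking that the tie-breaking rule is label-free so that the invariance argument goes through.
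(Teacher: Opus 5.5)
Your proposal is correct and is essentially the paper's own proof: the paper likewise colours each $B$ by the least $b$ for which $\psi_b(B)$ lies in a fixed maximum $J$-free $j$-graph $\mathcal I$ on $[n]$, where $\psi_0,\dots,\psi_{\PP-1}$ are independent uniform permutations. It then uses Lemma~\ref{end:lem:finite-density} for the density bound, and obtains invariance because composing all the permutations with a fixed one preserves their joint distribution. Your $\sigma_b(\mathcal H)$ is just $\psi_b^{-1}(\mathcal I)$ written in the other direction.
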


\begin{proof}
Take a maximum $J$-free $j$-graph $\mathcal I$ on $[n]$.  By Lemma~\ref{end:lem:finite-density},
\[
 \frac{|\mathcal I|}{\binom nj}\ge\pi(J)>\beta.
\]
For each $b\in\{0,\ldots,\PP-1\}$, choose a bijection
\[
 \psi_b:[n]\longrightarrow[n]
\]
 uniformly at random, with the choices for different $b$ being independent.
For $B\in\binom{[n]}j$, let $\varphi(B)$ be the least $b\in\{0,\ldots,\PP-1\}$ for which $\psi_b(B)\in\mathcal I$ and view $b$ as an element of $\Z{\PP}$. We define $\varphi(B):=\infty$ if no such $b$ exists.

For fixed $B\in\binom{[n]}{j}$, the set $\psi_b(B)$ is a uniformly random $j$-subset of $[n]$.  Each colour therefore succeeds in colouring $B$ with probability $|\mathcal I|/\binom nj>\beta$.  Hence, by independence,
\[
 \mathbb P(\varphi(B)=\infty)
 \le(1-\beta)^\PP\le \me^{-\beta \PP}.
\]

For any finite colour $b\in\Z{\PP}$, the colour-$b$ class $\varphi^{-1}(b)$ is contained in
\[
 \psi_b^{-1}(\mathcal I)
 :=\left\{B\in\tbinom{[n]}j:\psi_b(B)\in\mathcal I\right\}.
\]
 This is just a relabelling of $\mathcal I$ so it is $J$-free.

Finally, precomposing all functions $\psi_b$ with a fixed permutation of $[n]$ does not change their joint distribution.  This establishes the permutation invariance of~$\varphi$.
\end{proof}

The following result provides the key step of the proof.

\begin{theorem}\label{thm:app-lifting}
Suppose that $B_F$ has zero kernel over $\mathbb R$ and let $Q_F$ be the integer returned by Lemma~\ref{lem:app-good-characteristics}. 
Let integers $r>\ell\ge t$ satisfy $Q(r,\ell)\ge Q_F$, where 
\begin{equation}\label{eq:app-Q}
	Q(r,\ell):=
	\left\lceil
	\left(4\binom{r}{\ell+1}\right)^{1/\ell}
	\right\rceil.
\end{equation}
If $q>Q(r,\ell)$ is a prime then
\begin{equation}\label{eq:app-lifting}
 \pi(F(r))
 \ge
 \frac{3\pi(F(\ell))}{32q\log(2q)}.
\end{equation}
\end{theorem}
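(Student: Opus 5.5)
The plan is a probabilistic construction with two checksums. Fix $n\ge r$, set $\beta:=\pi(F(\ell))/2$, and choose $\PP:=\lceil \log(4q)/\beta\rceil$. Note that $\pi(F(\ell))>0$ by Lemma~\ref{lem:app-augmentation-positive}.

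\textbf{The construction.}
\begin{itemize}
\item Take a uniformly random colouring $a:[n]\to\mathbb F_q$. For an $r$-set $A$, define the first checksum $\CSOne(A):=\sum_{v\in A}a(v)\in\mathbb F_q$.
\item For each $j$ with $t\le j\le\ell$, take an independent random map $\varphi_j:\binom{[n]}j\to\Z{\PP}\cup\{\infty\}$ from Lemma~\ref{lem:app-global-colouring}, applied with $J:=F(j)$. This is allowed because $\beta<\pi(F(\ell))\le\pi(F(j))$ by Lemma~\ref{lem:app-augmentation-monotonicity}.
\item Call $A$ \emph{eligible} if two conditions hold. First, every colour class $A_z:=A\cap a^{-1}(z)$ has at most $\ell$ elements. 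Second, $\varphi_{|A_z|}(A_z)\ne\infty$ for every $z$ with $|A_z|\ge t$.
\item For eligible $A$, define the second checksum $\CSTwo(A):=\sum_{z:\,|A_z|\ge t}\varphi_{|A_z|}(A_z)\in\Z{\PP}$.
\item For $(\kappa_1,\kappa_2)\in\mathbb F_q\times\Z{\PP}$, let $\F{\kappa_1}{\kappa_2}$ be the set of eligible $A$ with $\CSOne(A)=\kappa_1$ and $\CSTwo(A)=\kappa_2$.
\end{itemize}

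\textbf{Deterministic $F(r)$-freeness.} Suppose $\F{\kappa_1}{\kappa_2}$ contains a copy of $F(r)$ with stem $D$ and edges $e\cup D$ for $e\in F$.
\begin{enumerate}
\item Equality of the first checksums gives $\sum_{v\in e}a(v)=d$ for every $e\in F$, where $d$ does not depend on $e$. Since $q>Q(r,\ell)\ge Q_F$, Lemma~\ref{lem:app-good-characteristics}\ref{it:iii} applies and shows that all base vertices receive one common colour $b$.
\item Put $D_b:=D\cap a^{-1}(b)$ and $j:=|D_b|+t$. For each edge, its colour-$b$ class is $e\cup D_b$, of size $j$. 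Eligibility forces $j\le\ell$, and $j\ge t$ holds automatically.
\item Every other colour class of $e\cup D$ lies inside $D$, so it is the same set for all edges. Hence those summands of $\CSTwo$ agree across the edges.
\item Equality of the second checksums therefore forces $\varphi_j(e\cup D_b)$ to be the same finite value for all $e\in F$. The sets $e\cup D_b$ form a copy of $F(j)$, so one colour class of $\varphi_j$ contains $F(j)$. This contradicts Lemma~\ref{lem:app-global-colouring}\ref{it:JFree}.
\end{enumerate}

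\textbf{Counting.} Fix an $r$-set $A$.
\begin{itemize}
\item \emph{Oversized classes.} The probability that some colour class of $A$ has at least $\ell+1$ elements is at most $q\binom{r}{\ell+1}q^{-\ell-1}=\binom r{\ell+1}q^{-\ell}\le 1/4$, using $q>Q(r,\ell)$.
\item \emph{Uncovered classes.} Condition on $a$. There are at most $q$ classes, and each is uncovered with probability at most $\me^{-\beta\PP}\le 1/(4q)$ by Lemma~\ref{lem:app-global-colouring}\ref{it:Uncovered}. A union bound gives probability at most $1/4$.
\item Hence $\mathbb P(A\text{ eligible})\ge 1/2$, and $\mathbb E\sum_{\kappa_1,\kappa_2}|\F{\kappa_1}{\kappa_2}|\ge\frac12\binom nr$.
\end{itemize}
Fix an outcome meeting this expectation and then the best of the $q\PP$ checksum pairs. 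This gives an $F(r)$-free $r$-graph with at least $\binom nr/(2q\PP)$ edges. Letting $n\to\infty$ yields $\pi(F(r))\ge 1/(2q\PP)$.

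\textbf{Main obstacle.} Since $\PP\le \log(4q)/\beta+1$ is of order $\log(2q)/\pi(F(\ell))$, the bound $1/(2q\PP)$ has the right form. The step I expect to need the most care is the constant bookkeeping to reach exactly $3\pi(F(\ell))/(32q\log(2q))$. This may require tuning $\beta$ and $\PP$ slightly, and using $\log(4q)\le 2\log(2q)$ together with the ceiling, for example via $\pi(F(\ell))\le1$ and $q\ge2$. The conceptual heart of the argument is the rigidity step through Lemma~\ref{lem:app-good-characteristics}\ref{it:iii}.
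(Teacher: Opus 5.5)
Your proposal is correct and follows the paper's proof essentially step by step: the same two checksums, the same rigidity step via Lemma~\ref{lem:app-good-characteristics}\ref{it:iii}, the same cancellation of the non-$b$ fibres, and the same union bounds. The only difference is your choice $m=\lceil\log(4q)/\beta\rceil$ instead of the paper's $\lceil\beta^{-1}\log(2q)\rceil$, and the bookkeeping you flagged closes without any tuning: $m\le 4\log(2q)/\pi(F(\ell))+1\le(4\log(2q)+1)/\pi(F(\ell))$ together with $8\log(2q)\ge 8\log 4>6$ gives $1/(2qm)\ge 3\pi(F(\ell))/(32q\log(2q))$.
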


\begin{proof}
Set
\[
 \beta:=\frac{\pi(F(\ell))}{2}>0
 \quad\text{and}\quad
 \PP:=\left\lceil\beta^{-1}\log(2q)\right\rceil.
\]
Since $q\ge2$ and $\beta\le1/2$, the expression inside the ceiling is larger than $1$.  Therefore
\begin{equation}\label{eq:app-P-bound}
 \PP\le2\beta^{-1}\log(2q)
 =\frac{4\log(2q)}{\pi(F(\ell))}.
\end{equation}

Take any $n\ge r$. Let
\[
 a:[n]\longrightarrow\mathbb F_q,
\]
 be a map chosen uniformly at random among all $q^n$ choices.
The \emph{first checksum} of $A\in \binom{[n]}{r}$ is
\[
\CSOne(A):=\sum_{v\in A} a(v)\in\mathbb F_q.
\]  

The second checksum $\CSTwo(A)$ will be defined only for some $r$-sets $A$ and we need to introduce some further random objects in addition to $a$ in order to define it. For every $j\in\{t,\ldots,\ell\}$, Lemma~\ref{lem:app-augmentation-monotonicity} gives
\(
\pi(F(j))\ge\pi(F(\ell))>\beta.
\)
Apply Lemma~\ref{lem:app-global-colouring} to $J=F(j)$, with the same parameters $\beta,n,\PP$, and take the resulting random maps
\[
\varphi_j:\binom{[n]}j\longrightarrow
\Z{\PP}\cup\{\infty\},\quad \text{for  $j\in \{t,\ldots,\ell\}$},
\]
with the choices for different values of $j$ being mutually independent, and also independent of~$a$. For $A\subseteq[n]$ and $z\in\mathbb F_q$, define
\[
 A_z:=\{v\in A:a(v)=z\}
.
\]
 Thus $A_z$ is the intersection of $A$ with the fibre $a^{-1}(z)$.
 
We would like to define the  \emph{second checksum} of $A\in \binom{[n]}r$ as
 \[
 \CSTwo(A):=\sum_{\substack{z\in\mathbb F_q\\ \n{z}{A}\ge t}}\varphi_{\n{z}{A}}(A_z).
 \] 
 In order for this expression to make sense, we need that
 \[
 \text{$\n{z}{A}\le \ell$ and if $\n{z}{A}\ge t$ then $\varphi_{\n{z}{A}}(A_z)\not=\infty$,\quad for every $z\in\mathbb F_q$.}
 \] In this case, we say that $\CSTwo(A)$ \emph{is defined} and view it as an element of $\Z{\PP}$.

For each $\kappa_1\in\mathbb F_q$ and $\kappa_2\in \Z{\PP}$, define an $r$-graph $\F{\kappa_1}{\kappa_2}$ on $[n]$ where an $r$-set $A\subseteq [n]$ is an edge of $\F{\kappa_1}{\kappa_2}$ if and only if
\[
\text{$\CSOne(A)=\kappa_1$\quad and\quad $\CSTwo(A)=\kappa_2$,}
	\]
	that is, the first checksum of $A$ is $\kappa_1$ while the second checksum is defined and is equal to $\kappa_2$.

We first show that every realisation of $\F{\kappa_1}{\kappa_2}$ is $F(r)$-free.  Suppose to the contrary that it contains a copy of $F(r)$.  Let $D\subseteq [n]$ be the common stem of this copy, of size $r-t$, and identify its base vertices with $V(F)$.

For every $e\in E(F)$, the augmented edge $D\cup e$ has the first checksum $\kappa_1$, so
\[
 \sum_{u\in D}a(u)+\sum_{v\in e}a(v)=\kappa_1.
\]
Thus all base-edge sums of the vector $(a(v))_{v\in V(F)}$ are equal to $d:=\kappa_1-\sum_{u\in D}a(u)$.
Since $q>Q_F$, we have by Lemma~\ref{lem:app-good-characteristics}\ref{it:iii} that 
 $a(v)=d/t$ for every $v\in V(F)$.
Let
\[
 D':=\{u\in D:a(u)=d/t\}
 \quad\text{and}\quad t':=|D'|+t.
\]
 Note that, trivially, $t'\ge t$.
For every augmented edge $A=D\cup e$, the set of its vertices $v$ with $a(v)=d/t$ is exactly the $t'$-set $D'\cup e$, that is, $A_{d/t}=D'\cup e$. Since the second checksum of $A$ is defined, we have that $t'\le \ell$.   As $e$ ranges over $E(F)$, the sets $D'\cup e$ form a copy of $F(t')$ in the fibre $a^{-1}(d/t)\subseteq [n]$. 

All augmented edges $A$ of the copy of $F(r)$ have the same  occupancy profile $(\n{z}{A})_{z\in\mathbb F_q}$.  In every fibre other than $a^{-1}(d/t)$, they contain exactly the same subset of the common stem $D$. Their second checksums are defined and are all equal to each other. By cancelling the contributions from the fibres other than $a^{-1}(d/t)$, we derive that there is $b\in \Z{\PP}$ such that $\varphi_{t'}(D'\cup e)=b$ for every $e\in E(F)$.
We have therefore obtained a copy of $F(t')$ in $\varphi_{t'}^{-1}(b)$ with $b\not=\infty$, contradicting Lemma~\ref{lem:app-global-colouring}.  Thus $\F{\kappa_1}{\kappa_2}$ is $F(r)$-free.

Let us lower bound the expected edge density of $\F{\kappa_1}{\kappa_2}$ over uniform $(\kappa_1,\kappa_2)\in \mathbb F_q\times\Z{\PP}$, independent of the previous choices. Fix an $r$-set $A$.

If some $a$-fibre contains at least $\ell+1$ vertices of $A$, then the function $a$ is constant on some $(\ell+1)$-subset of $A$. For any fixed $(\ell+1)$-subset, this has probability $q^{-\ell}$.  Hence, by the Union Bound,
\begin{equation*}
 \mathbb P\bigl(\n{z}{A}>\ell\text{ for some }z\in\mathbb F_q\bigr)
 \le\binom r{\ell+1}q^{-\ell}
 \le\binom r{\ell+1}Q(r,\ell)^{-\ell}
 \le\frac14.
\end{equation*}

Let us condition on a restriction of $a$ to $A$ such that
each $\n{z}{A}$ is at most $\ell$. For each $z\in\mathbb F_q$ with $\n{z}{A}\ge t$, the conditional probability of the bad event that $\varphi_{\n{z}{A}}(A_z)=\infty$ is at most $\me^{-\beta \PP}$ by Lemma~\ref{lem:app-global-colouring}. 
By the Union Bound, the probability over the random functions $(\varphi_j)_{j=t}^{\ell}$ that there is $z\in\mathbb F_q$ with $\n{z}{A}\ge t$ and $\varphi_{\n{z}{A}}(A_z)=\infty$
is at most $q\me^{-\beta \PP}\le 1/2$. 

Suppose that no bad event as above occurs. Then the second checksum of the $r$-set $A$ is defined and assumes one of the $\PP$ possible values in $\Z{\PP}$. Since the first checksum $\CSOne(A)\in\mathbb F_q$ is always defined, there is a choice of $(\kappa_1,\kappa_2)$, namely $(\CSOne(A),\CSTwo(A))$, such that $A\in\F{\kappa_1}{\kappa_2}$.

Putting everything together, the probability over the random choices of $(a,(\varphi_j),\kappa_1,\kappa_2)$ that a given $r$-set $A\subseteq [n]$ is an edge of $\F{\kappa_1}{\kappa_2}$ is at least
\begin{equation*}
	\left(1-\frac14\right)\cdot\left(1-\frac1{2}\right)\cdot \frac1q\cdot \frac1{\PP}
 =\frac3{8q\PP}.
\end{equation*}
 By the linearity of expectation, there is at least one outcome when $|\F{\kappa_1}{\kappa_2}|\ge \frac3{8\PP q} \binom{n}{r}$. 
 Since this is true for every $n\ge r$ and every such realisation is $F(r)$-free, we conclude that
\[
 \pi(F(r))\ge\frac3{8q\PP}.
\]
Using \eqref{eq:app-P-bound}, we obtain \eqref{eq:app-lifting}.
\end{proof}

Let us make some brief remarks about the above proof.
The definition  of the first checksum $\CSOne$ and its usage (forcing all base vertices to have the same colour under the map $a$) are fairly standard, e.g.\ it appeared in the papers~\cite{KimRoush83,FranklRodl85gc,Sidorenko97} that dealt with lower bounds on $\pi(H_{r+1}^r)$. (These papers had a deterministic balanced colouring $a$ and our choice of making the function $a$ random was just a matter of convenience.) Here the zero-kernel condition ensures that equality of the first checksums forces all base vertices to have the same colour. 
The second checksum is more interesting and subtle. Informally, the main idea behind it is to extend the $a$-monochromatic base copy of $F$ to the maximal monochromatic augmentation (with the stem being $D'$ in the above proof) under the vertex colouring $a$ and to explore the consequences of all edges of the original $r$-augmentation $F(r)$ having the same $\CSTwo$-value. A related use of sums of local colours and cancellation appears in a paper of Sales~\cite[Section~3.3 and Proposition~5.6]{Sales24} on the Ramsey numbers of daisies.

The following intermediate result, which is a direct consequence of Theorem~\ref{thm:app-lifting}, will be useful in proving the  lower bound of Theorem~\ref{end:thm:main}.

\begin{lemma}\label{lem:app-one-step}
Suppose that $B_F$ has zero kernel over $\mathbb R$. If $r$ is sufficiently large, depending only on $F$, then it holds with $\ell:=\lceil8\log r\rceil$ that
\begin{equation}\label{eq:app-one-step}
 \pi(F(r))\ge\frac{3\pi(F(\ell))}{64r}.
\end{equation}
\end{lemma}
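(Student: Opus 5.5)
We apply Theorem~\ref{thm:app-lifting} with $\ell:=\lceil 8\log r\rceil$ and a prime $q$ chosen just above $Q(r,\ell)$, and then check that $q\log(2q)\le 2r$ for large $r$. Dividing \eqref{eq:app-lifting} through, this last inequality gives exactly \eqref{eq:app-one-step}:
\[
\frac{3\pi(F(\ell))}{32q\log(2q)}\ge\frac{3\pi(F(\ell))}{64r}.
\]
The hypotheses $r>\ell\ge t$ hold trivially for large $r$, since $\ell=\Theta(\log r)$.

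\emph{Step 1: estimating $Q(r,\ell)$.} We use $\binom{r}{\ell+1}\le r^{\ell+1}/(\ell+1)!$ together with $(\ell+1)!\ge((\ell+1)/\me)^{\ell+1}$. Taking $\ell$-th roots gives
\[
\left(4\binom{r}{\ell+1}\right)^{1/\ell}\le 4^{1/\ell}\left(\frac{\me r}{\ell+1}\right)^{1+1/\ell}.
\]
Since $\ell\ge 8\log r$, we have $r^{1/\ell}\le \me^{1/8}$, and the factors $4^{1/\ell}$ and $(\me/(\ell+1))^{1/\ell}$ are at most $1+o(1)$. Hence
\[
Q(r,\ell)\le (1+o(1))\,\me^{1+1/8}\,\frac{r}{\ell}\le \frac{4r}{\ell}
\]
for large $r$, where we use $\me^{9/8}\approx 3.08<4$. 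Also $Q(r,\ell)\ge \binom{r}{\ell+1}^{1/\ell}\to\infty$, since this is roughly $r/\ell$. Therefore $Q(r,\ell)\ge Q_F$ once $r$ is large enough in terms of $F$.

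\emph{Step 2: choosing $q$.} By Bertrand's postulate, there is a prime $q$ with $Q(r,\ell)<q\le 2Q(r,\ell)$. Step~1 then gives $q\le 8r/\ell$. It follows that $\log(2q)\le\log(16 r)\le \ell/8+\log 16$, using $\ell\ge 8\log r$. Combining the two bounds,
\[
q\log(2q)\le \frac{8r}{\ell}\left(\frac{\ell}{8}+\log 16\right)=r+\frac{8r\log 16}{\ell}\le 2r
\]
for large $r$, as required.

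The only delicate point is the constant bookkeeping: the bound $q\le 8r/\ell$ must be paired with $\log(2q)\le \ell/8+O(1)$ so that the factor $\ell$ cancels. The constant $8$ in the definition of $\ell$ is chosen precisely so that $8\cdot\frac18=1$, leaving the slack factor $2$ to absorb the lower-order terms. If Bertrand's postulate turns out to be too lossy, one can instead use the prime number theorem to get $q\le(1+o(1))Q(r,\ell)$, which gives even more room.
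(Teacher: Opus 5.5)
Your proposal is correct and follows the paper's proof essentially step by step: you bound $Q(r,\ell)$ by a constant times $r/\ell$ via $\binom{r}{\ell+1}\le(\me r/(\ell+1))^{\ell+1}$, pick a prime $q\in(Q,2Q]$ by Bertrand's postulate, check $q\log(2q)\le 2r$, and invoke Theorem~\ref{thm:app-lifting}. The only differences are in the constant bookkeeping, and they are harmless: the paper treats the ceiling explicitly to get $Q\le 5r/\ell$ and uses $2q\le r$ to get $\log(2q)\le\ell/8$, whereas you absorb the ceiling into the $1+o(1)$ factor (valid since $r/\ell\to\infty$) and carry the additive $\log 16$.
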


\begin{proof}
Let $Q=Q(r,\ell)$ be defined by \eqref{eq:app-Q}.  The standard binomial estimate gives
\begin{align*}
 \left(4\binom r{\ell+1}\right)^{1/\ell}
 &\le
 \left(4\left(\frac{\me r}{\ell+1}\right)^{\ell+1}\right)^{1/\ell}\\
 &=\frac{\me r}{\ell+1}
 \exp\left(
  \frac{\log4+\log(\me r/(\ell+1))}{\ell}
 \right).
\end{align*}
For all sufficiently large $r$,
\[
 \log4+\log\left(\frac{\me r}{\ell+1}\right)
 \le2\log r\le\frac \ell4.
\]
Consequently,
\[
 \left(4\binom r{\ell+1}\right)^{1/\ell}
 \le \frac {\me^{5/4}r}{\ell+1}<\frac {4r}\ell.
\]
Since $r/\ell\to\infty$, the ceiling contributes at most another $r/\ell$ for all sufficiently large $r$, and hence
$ Q\le {5r}/{\ell}$.
Also $Q\to\infty$, so it eventually exceeds the constant $Q_F$ returned by Lemma~\ref{lem:app-good-characteristics}. Take a prime $q$ with $Q<q<2Q$; it exists by Bertrand's postulate.  Then $q\le{10r}/{\ell}$.
For large $r$, we have $2q\le r$ and therefore
\[
 \log(2q)\le\log r\le\frac \ell8.
\]
It follows that
\[
 q\log(2q)
 \le\frac{10r}{\ell}\cdot\frac \ell8
 \le2r.
\]
Theorem~\ref{thm:app-lifting} now yields this lemma.
\end{proof}

\renewcommand{\rho}{R}

\begin{proof}[Proof of Theorem~\ref{end:thm:main}]
Fix $\rho=\rho(F)$ large enough so that, in particular, Lemma~\ref{lem:app-one-step} applies for every $r\ge \rho$. Next, let $r$ be sufficiently large. Define
\begin{equation*}
 r_1:=r
 \quad\text{and}\quad
 r_{i+1}:=\lceil8\log r_i\rceil\quad\text{for $i=1,2,3$.}
\end{equation*}
We can assume that, for each $i\in [4]$, it holds that $r_i\ge \rho$.
Applying \eqref{eq:app-one-step}
three times, we obtain
\begin{equation}\label{eq:app-three-steps}
 \pi(F(r))
 \ge \frac{3\pi(F(r_2))}{64\,r_1}
 \ge \left(\frac3{64}\right)^{2}\frac{\pi(F(r_3))}{r_1r_2}
  \ge \left(\frac3{64}\right)^{3}\frac{\pi(F(r_4))}{r_1r_2r_3}.
\end{equation}
By Lemma~\ref{lem:app-augmentation-positive} and the inequality $j!\ge (j/\me)^{j}$, which holds for every integer
$j\ge1$, we have
\begin{equation}\label{eq:app-base-case}
 \pi(F(r_4)) \ge \frac{r_4!}{r_4^{\,r_4}}\ \ge\ \me^{-r_4}.
\end{equation}
 Also, we can assume that $r_{i+1}\le 9\log r_i$ for each $i\in [3]$. Thus $\me^{r_4}\le
 r_3^{9}=(\log r)^{o(1)}$.  Substituting this into
\eqref{eq:app-base-case} and then into \eqref{eq:app-three-steps}, we conclude that
\[ \pi(F(r))
 \ge \left(\frac3{64}\right)^{3}\frac1{r_1\,r_2\,r_3^{10}}
 \ge \frac1{r(\log r)^{1+o(1)}},
\]
finishing the proof.\end{proof}

Finally,  Theorem~\ref{end:thm:augmentations}
follows from Theorem \ref{end:thm:main} and the backward implication in the following easy result.

\begin{lemma}\label{lem:app-graph-rank}
Let $F$ be a connected graph with at least one edge.  Then $B_F$ has zero kernel if and only if $F$ is non-bipartite.
\end{lemma}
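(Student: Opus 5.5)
The plan is to prove the two implications separately, working directly with the linear system $B_F\V{x}=\V{0}$. For a graph, a vector $\V{x}\in\mathbb R^{V(F)}$ lies in $\ker B_F$ exactly when $x_u+x_v=0$ for every edge $uv\in E(F)$, that is, $x_v=-x_u$ along every edge.

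For the forward direction (zero kernel implies non-bipartite), I would argue by contraposition, mirroring the first paragraph of the proof of Lemma~\ref{lem:app-augmentation-positive}. Suppose $F$ is bipartite with parts $V_1\cup V_2$. Define $\V{x}$ to be $1$ on $V_1$ and $-1$ on $V_2$. Every edge has one endpoint in each part, so every edge sum is $0$. Since $F$ has at least one edge, $V(F)$ is non-empty, so $\V{x}\neq\V{0}$ is a non-zero kernel vector. Connectivity is not even needed here.

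For the backward direction (non-bipartite implies zero kernel), take $\V{x}\in\ker B_F$ and fix a vertex $w$. The key step is to propagate along paths. By induction on the length of a walk $w=v_0,v_1,\dots,v_m$, the relation $x_{v_i}=-x_{v_{i-1}}$ gives $x_{v_m}=(-1)^m x_w$. Because $F$ is non-bipartite, it contains an odd cycle $C$. Because $F$ is connected, we can walk from $w$ to a vertex of $C$, go once around $C$, and return along the same path. This is a closed walk from $w$ to $w$ of odd length $2p+|C|$, so $x_w=-x_w$ and hence $x_w=0$. Since $w$ was arbitrary, $\V{x}=\V{0}$.

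There is no real obstacle. The only step requiring care is producing the odd closed walk through an arbitrary vertex $w$, and that is exactly where connectivity is used.
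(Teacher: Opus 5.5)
Your proof is correct and follows essentially the same route as the paper: the $\pm1$ vector on the bipartition gives a non-zero kernel vector, and in the non-bipartite case sign alternation along an odd cycle forces zeros. The only cosmetic difference is that you build an odd closed walk through an arbitrary vertex $w$, whereas the paper first gets $x_v=0$ at a vertex of the odd cycle and then uses connectedness to propagate zero to all vertices.
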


\begin{proof}
Suppose first that $F$ is bipartite with parts $X$ and $Y$.  The non-zero vector $\V{x}$ defined by
\[
 x_v:=\left\{\begin{array}{rl}
 	 1,&\text{if $v\in X$},\\
-1,&\text{if $v\in Y$,}
\end{array}\right.
\]
lies in the kernel of $B_F$, so $B_F$ does not have zero kernel.

Conversely, suppose that $F$ is non-bipartite and $B_F\V{x}=0$.  For every edge $uv$,
\begin{equation}\label{eq:app-graph-edge}
 x_u+x_v=0.
\end{equation}
Thus the values alternate in sign along any path.  Since $F$ contains an odd cycle, following \eqref{eq:app-graph-edge} around that cycle gives $x_v=-x_v$ at a cycle vertex, and hence $x_v=0$.  Connectedness then propagates zero along paths to every vertex.  Thus $B_F$ has zero kernel.
\end{proof}

\section{Concluding remarks}
\label{sec:concluding}

It is interesting that all the main results of this paper (including the analysis of the circular construction of Sidorenko) use probabilistic methods.

Of course, an obvious intriguing open problem is to determine the asymptotic value of $\pi(H_3^r)$ as $r\to\infty$.

In terms of $r$-augmentations of a fixed $3$-chromatic graph $F$, \eqref{eq:(k-2)/r} and Theorem~\ref{thm:mainPoisson} imply that $\pi(F(r))=\Theta(1/r)$ if $F$ has a triangle. In the next simplest case when $F$ is the $5$-cycle $C_5$, we have a multiplicative gap of $(\log r)^{1+o(1)}$ between the bounds and closing it is an interesting open problem.

\begin{problem} Determine $\pi(C_5(r))$ within a constant factor as $r\to \infty$.\end{problem}

A natural approach to improving the lower bound in Theorem~\ref{thm:mainPoisson} is to replace the cap set $\mathcal C_q$ by a set containing no $k$ collinear points and take $h=1$ in Lemma~\ref{lem:poisson-free} (or to use a combination of relaxing the condition on the set $\mathcal C_q$ and decreasing $h$ appropriately). We do not know what general improvement can be achieved this way, so we briefly discuss just the case $k=4$ (for which the multiplicative gap between the current asymptotic bounds is the largest). Here, the best lower bound coming from the results of this paper is 
 \begin{equation}\label{eq:H4r}
 \pi(H_4^r)\ge \frac{\me^{-1}-o(1)}r,
 \end{equation} the same as the bound for $\pi(H_3^r)$. Define $m(q)$ to be the maximum size of a set $\mathcal X\subseteq \operatorname{PG}(3,q)$ that contains no 4 collinear points. If we can improve the bound $m(q)\ge q^2+1$ coming from Lemma~\ref{lem:cap} by a factor $1+\Omega(1)$ for a sequence of prime powers $q$ whose consecutive terms have ratio $1+o(1)$ then this will improve~\eqref{eq:H4r}. Such an improvement is consistent with our current knowledge:  Bramwell and Wilson~\cite{BramwellWilson73} proved that $m(q)\le 2q^2$ for $q\ge4$, while the best known general upper bound by Hill~\cite{Hill78} improves it only in lower order terms. The best  lower bound valid for every large prime $q$ that the authors are aware of is $m(q)\ge (q+1)^2$ coming from a smooth cubic surface $\mathcal X$ of Frobenius class $C_{12}$, see Banwait, Fit\'e and Loughran~\cite[Eq.~(2.1) and Table~7.1]{BanwaitFiteLoughran19} and Loughran and Trepalin~\cite[Theorem~1.1]{LoughranTrepalin20}. Such a surface has $(q+1)^2$ rational points and contains no line defined over $\mathbb F_q$. Restricting its cubic equation to any line defined over $\mathbb F_q$ therefore gives at most three zeros.
 \hide{Some details:
 	Hill~\cite{Hill78} improved it to $2q^2-\alpha_q$ where $\alpha_q$ comes from an explicit (but messy) inequality and is linear in $q$, specifically he showed that
 	\begin{equation}\label{eq:H4-Hill-bound}
 		m(q)\le2q^2-\frac{\sqrt{17}-3}{2}\,q+O(1)
 		\quad(q\to\infty);
 	\end{equation}

 	In the other direction, results on cubic surfaces imply
 	\begin{equation}\label{eq:H4-cubic-cap}
 		m(q)\ge(q+1)^2
 		\quad\text{for every prime power }q.
 	\end{equation}
 	To see this, Theorem~1.1 of Loughran and Trepalin~\cite{LoughranTrepalin20} gives a smooth cubic surface $\mathcal X$ of Frobenius class $C_{12}$ over every $\mathbb F_q$. The point-count formula and Table~7.1 of Banwait, Fit\'e and Loughran~\cite{BanwaitFiteLoughran19} give $| \mathcal X(\mathbb F_q)|=q^2+2q+1$. The same table gives Frobenius orbits of lengths $3,6,6,6,6$ on its 27 geometric lines, so none of these lines is defined over $\mathbb F_q$. Restricting the defining homogeneous cubic to any $\mathbb F_q$-line therefore gives a non-zero homogeneous binary cubic, which has at most three projective zeros. Thus $ \mathcal X(\mathbb F_q)\subseteq\operatorname{PG}(3,q)$ has no four collinear points, proving~\eqref{eq:H4-cubic-cap}.
 }%

In fact, if we had a set $ \mathcal X\subseteq \operatorname{PG}(3,q)$ proving $m(q)\ge (1+\varepsilon)q^2$ for some fixed $\varepsilon>0$ and infinitely many $q$, then by averaging over the planes of $\operatorname{PG}(3,q)$ we would get a plane section of $ \mathcal X$ of size at least $(1+\varepsilon-o(1))q$, obviously still
without any four points on a line. This would settle an open problem of Blokhuis for whose resolution he offered 10,000 Hungarian forints (see e.g.~\cite[Section 5, Problem 2]{BallHirschfeld05}).  Thus a constant factor improvement  of~\eqref{eq:H4r} through larger generalised caps seems to require a substantial advance in the underlying finite geometry.


\section*{Acknowledgement}
\noindent 
Oleg Pikhurko was supported 
by ERC Advanced Grant 101020255 and by a grant of access to OpenAI models through the ChatGPT for Academic Researchers program.
Jiabao Yang was supported by National Key R\&D Program of China under grant number 2024YFA1013900, 
NSFC under grant number 12471327, the China Postdoctoral Science Foundation under grant number 2026M793375.
Xiutao Zhu was supported by NSFC under grant number 12401454, Basic Research Program of Jiangsu Province (BK20241361).

The authors thank Jeff Pickhardt for drawing their attention to~\cite{Pickhardt26} and for helpful discussions.

\section*{Declaration}
During the preparation of this work, the authors used AI systems (ChatGPT 5.6 and 6) for various tasks, including generation of candidate
proofs. All AI
suggestions that were incorporated into the final version were verified and streamlined by the authors, who also tried to find closest matches in the literature to the key AI-generated mathematical ideas used in this paper (the cap construction in Theorem~\ref{thm:mainPoisson} and the two checksums in Theorem~\ref{end:thm:augmentations}). The authors take full responsibility for the correctness of the paper.

\smallskip
\noindent\textbf{Data availability.} No data were used for the research described in this article.

\bibliographystyle{alpha}
\bibliography{bibexport}

\end{document}